\newcommand{\N}{\mathbb{N}}
\newcommand{\R}{\mathbb{R}}
\newcommand{\cA}{\mathcal{A}}
\newcommand{\cN}{\mathcal{N}}
\newcommand{\cP}{\mathcal{P}}
\newcommand{\cV}{\ell}
\newcommand{\cX}{\mathcal{X}}
\newcommand{\cO}{\mathcal{O}}
\newcommand{\cS}{\mathcal{S}}
\newcommand{\eps}{\varepsilon}
\newcommand{\dd}{{\rm d}}
\DeclareMathOperator{\Tr}{Tr}
\DeclareMathOperator{\KL}{KL}
\def\KL#1#2{\textnormal{KL}({#1}\Vert{#2})}
\DeclarePairedDelimiter\ceil{\lceil}{\rceil}
\title{Multilevel Optimization for Inverse Problems\footnote{This work
    was finished during the visit of Simon Weissmann and Jakob Zech at
    MIT. SW and JZ would like to thank Youssef Marzouk and Ashia
    Wilson for the invitation to visit, for providing a fruitful
    research environment and for their kind hospitality. Moreover, SW and
    JZ thank Universit\"at Heidelberg for helping to fund the visit as
    part of the program ``Mobilit\"atsmassnahmen im Rahmen
    internationaler Forschungskooperationen".}}
\author[1]{Simon Weissmann}
\author[2]{Ashia Wilson}
\author[1]{Jakob Zech}
\date{\today}
\affil[1]{\normalsize
  Universit\"at Heidelberg, Interdisziplin\"ares Zentrum f\"ur Wissenschaftliches Rechnen, D-69120 Heidelberg, Germany\\
  
\texttt{simon.weissmann@uni-heidelberg.de}, \texttt{jakob.zech@uni-heidelberg.de}
}
\affil[2]{Massachusetts Institute of Technology, Department of Electrical Engineering and Computer Sciences, Cambridge, MA, 02139, USA\\

  \texttt{ashia07@mit.edu}
}
\newtheorem{theorem}{Theorem}[section]
\newtheorem{corollary}[theorem]{Corollary}
\newtheorem{definition}[theorem]{Definition}
\newtheorem{example}[theorem]{Example}
\newtheorem{lemma}[theorem]{Lemma}
\newtheorem{proposition}[theorem]{Proposition}
\newtheorem{remark}[theorem]{Remark}
\newtheorem{assumption}[theorem]{Assumption}
\begin{document}

\maketitle

\begin{abstract}
  Inverse problems occur in a variety of parameter identification
  tasks in engineering. Such problems are challenging in practice, as
  they require repeated evaluation of computationally expensive
  forward models.  We introduce a unifying framework of multilevel
  optimization that can be applied to a wide range of
  optimization-based solvers.  Our framework provably reduces the
  computational cost associated with evaluating the expensive forward
  maps stemming from various physical models.  To demonstrate the
  versatility of our analysis, we discuss its implications for various
  methodologies including multilevel (accelerated,  stochastic) gradient descent, a multilevel
  ensemble Kalman inversion and a multilevel Langevin sampler. We also
  provide %
  numerical experiments to verify our theoretical findings. %
\end{abstract}

{\bf keywords}
multilevel methods, optimization, inverse problems

\section{Introduction}\label{sec:Introduction}
Inverse problems are ubiquitous in %
applied mathematics and modern machine learning.  The aim %
is usually to quantify information about unknown parameters which are
indirectly observed through a noisy observation model. %
Solutions for inverse problems are often found using optimization and
sampling methods and crucially depend on an underlying physical model
incorporated through a forward map. The physical models %
are typically highly complex such that associated numerical
approximations come with extensive computational costs. The multilevel
Monte Carlo method (MLMC) \cite{MR2436856,Heinrich2001} is a
well-established variance reduction method, %
which addresses this issue by shifting a %
large part of the work to less accurate model evaluations.  In the
context of Bayesian inference, MLMC methods have been applied to
Markov chain Monte Carlo (MCMC) methods \cite{MLMCMC} as well as to
deterministic quadrature rules such as sparse grid
\cite{MR3502561,MLGPC} and quasi-Monte Carlo methods
\cite{MR2648461,MR3636617}.

In this work, we apply similar ideas to the following general
optimization problem
\begin{equation}\label{eq:general_opti}
  \min_{x\in\mathcal X}\ \Phi(x),
\end{equation}
where $\mathcal X$ is a Hilbert space and
$\Phi:\mathcal X\to \mathbb R_+$ an objective. %
The idea of multilevel optimization %
is to replace the evaluation of $\Phi$ (or its derivatives) by %
some %
approximation that becomes increasingly more accurate as the
optimization process converges. Intuitively, when the current state
may be far from minimum, it suffices to roughly move in the direction
of the minimizer; however, as the state approaches the minimizer of
$\Phi$, higher %
accuracy is required to reduce numerical bias. Multilevel optimization
strategies are targeting efficient algorithms with the aim of reducing
overall computational costs.

Such ideas have recently been applied in different contexts.  The
works closest to ours are \cite{MR4284423,MR4294188} which use
multilevel optimization on an optimal control problem, and
\cite{2104.01945}, where a multilevel version of the Stein variational
gradient descent method is introduced. The aim of our manuscript is to
formulate a unifying multilevel framework which can be a applied to a
wide range of optimization and sampling methods with particular focus
on inverse problems.

\paragraph{Contributions} Our principal contributions are three-fold:

\begin{itemize}
\item We formulate a multilevel strategy for general iterative
  optimization methods where each update step depends on an accuracy
  level. We derive an optimal choice of levels that minimizes
  computational costs %
  while ensuring to achieve a certain tolerance for the error.
  Compared to the single-level framework, we prove that the
  computational cost can be reduced by a $\log$-factor, and we provide
  an example to show that our results are sharp.
\item We use our framework to %
  introduce a multilevel ensemble Kalman inversion method and its
  extension to Tikhonov regularization. For linear forward models and
  with the incorporation of variance inflation, we prove convergence
  rates that reduce the computational costs by the expected
  $\log$-factor when compared to single-level methods.
\item We apply our framework to particle based sampling methods for
  Bayesian inference. We develop a multilevel formulation of
  interacting Langevin samplers. Viewing Langevin dynamics as gradient
  flow in the space of probability measures w.r.t. the
  ~Kullback-Leibler divergence, under certain assumptions we show
  convergence for the mean-field limit and provide a cost analysis
  that again reduces cost by a $\log$-factor compared to the
  single-level method.
\end{itemize}

\paragraph{Outline} $\S$\ref{sec:IP} discusses optimization-based
approaches for solving inverse problems while $\S$\ref{sec:ML_opti}
presents our unified multilevel optimization framework. In
$\S$\ref{sec:MLEKI} and $\S$\ref{sec:MLsampling} we apply our
framework to particle based optimization and Bayesian inference
respectively, and $\S$\ref{sec:numerics} presents numerical
experiments for these examples.

\paragraph{Notation} $f \lesssim g$ indicates the existence of $C$
such that $f(x)\le C g(x)$, with $C$ independent of $x$ in a certain
range that will be clear from context. Moreover $f\simeq g$ iff
$f\lesssim g$ and $g\lesssim f$.%
\section{Inverse Problems}\label{sec:IP}
Let $\cX$ be a Hilbert space, $n_y\in \N$ and
$F:\cX\to\R^{n_y}$ %
the so-called \emph{forward model}. We consider the task of recovering
the unknown quantity $x\in\cX$ from a (noisy) observation
$y\in\R^{n_y}$ of $F(x)$. Throughout we assume an additive Gaussian
noise model, i.e.\ $y$ is a realization of the random variable
\begin{equation}\label{eq:ip}
  Y = F(x) + \eta,
\end{equation}
with $\eta\sim\cN(0,\Gamma)$ Gaussian for a symmetric positive
definite (SPD) covariance matrix $\Gamma\in\mathbb R^{n_y\times n_y}$.

This problem is typically ill-posed in the sense of \cite{H1902}, %
for instance because the dimension of the parameter space $\cX$ may be
much higher than the dimension $n_y$ of the observation space. We now
recall two different methodologies to deal with these difficulties,
both of which recast the problem into one of optimization.

\subsection{Regularized optimization}\label{sec:regopt}
One classical approach to approximate $x$ is to minimize the objective
\begin{align}\label{eq:regularized_lossfunction}
  \Phi(x) := %
  \cV(x,y)
  +
  R(x),\qquad
  \cV(x,y) = \frac12\|\Gamma^{-1/2}(F(x)-y)\|_{\R^{n_y}}^2,
\end{align}
where $\cV$ denotes the least-squares data misfit loss functional and
$R:\mathcal X\to\R_+$ is a regularizer.  Common choices of
regularization include Tikhonov regularization \cite{EKN1989} with
$R(x) = \frac\lambda 2\|C_0 x\|^2$ and total variation regularization
\cite{Chambolle2009AnIT,ROF1992} with
$R(x) = \frac\lambda 2\|\nabla x\|^2$. Note that prior information can
be incorporated through %
$C_0\in\mathcal L(\mathcal X,\mathcal X)$.  In the following, for
fixed $y$, we use the shorthand $\cV(x):=\cV(x,y)$. %

We continue this discussion in Sec.~\ref{sec:MLEKI} where we present a
particle based multilevel optimization method to minimize $\Phi$ in
\eqref{eq:regularized_lossfunction}.  For motivation and further
discussion of regularization methods to solve~\eqref{eq:ip}, see,
e.g., \cite{EHN96,benning_burger_2018} and references therein.

\subsection{Bayesian inference}\label{sec:baysinf}
In the Bayesian approach (e.g.~\cite{AMS10}) the parameter and
observation are modeled as a joint random variable $(X,Y)$ on
$\mathcal X\times \mathbb R^{n_y}$. The goal is to determine the
\emph{posterior}, which refers to the conditional distribution of $X$
given the realization $y$ of $Y$ in \eqref{eq:ip}.  Assume $X$ and
$\eta$ to be stochastically independent, and let $X\sim Q_0$ for a
\emph{prior distribution} $Q_0$.  {Under certain technical assumptions
  \cite[Theorem~6.31]{AMS10},} the posterior {$Q^y_\ast$} is then
well-defined, absolutely continuous with respect to the prior, and
$Q^y_\ast({\mathrm d}x) = \frac{1}{Z} \exp(-\cV(x,y))Q_0({\mathrm
  d}x)$, where
$Z=\int_{\mathcal X} \exp(-\cV(x,y))Q_0({\mathrm d}x)\in\R$ is a
normalizing constant and $\cV$ is given
by~\eqref{eq:regularized_lossfunction}.

Suppose for the moment that $\cX=\R^{n_x}$ is finite dimensional and
the prior $Q_0 = \cN(0,\frac1\lambda C_0)$ is Gaussian,
$C_0\in\R^{n_x\times n_x}$ SPD, $\lambda>0$.  Then, %
the posterior $Q^y_\ast$ has Lebesgue density
$\rho_\ast(x) = \frac{1}{Z}\exp(-\cV(x,y)+R(x)),$ with
$R(x)=\frac{\lambda}{2}\|C_0^{-1/2} x\|_{\R^{n_x}}^2.$ In the Bayesian
framework, solving the inverse problem amounts to sampling from the
posterior. One way to achieve this is by minimizing the objective
\begin{equation}\label{eq:sampling_objective}
  \Phi(\psi) = \KL{\psi}{\rho_*},
\end{equation}
for $\psi$ in a given family of (tractable) probability distributions
on $\cX$. Here ${\rm KL}$ stands for the Kullback–Leibler
divergence~\cite{Kullback}.  Hence, we end up again with an
optimization problem, but this time over a subspace of the probability
measures on $\cX$.  This discussion will be continued in
Sec.~\ref{sec:MLsampling} where we present a multilevel optimization
algorithm to minimize \eqref{eq:sampling_objective}.

While the approaches in \eqref{eq:regularized_lossfunction} and
\eqref{eq:sampling_objective} are entirely different, we emphasize
that the minimization of both objectives requires multiple evaluations
of the forward model $F$, which might be very costly in practice.

\begin{example}\label{ex:1}
  Let $\cX=L^2(D)$ for a convex bounded polygonal domain
  $D\subseteq\R^2$. By classical PDE theory, for every $f\in \cX$, the
  equation
  \begin{equation}\label{eq:PDE}
    \begin{cases}
      -\Delta u_f(s)+u_f(s) = f(s) &s\in D,\\
      u_f(s) = 0 &s\in\partial D,
    \end{cases}
  \end{equation}
  has a unique weak solution
  $u_f\in H^2(D)\cap H_0^1(D)\subseteq \cX$. %
  Let $\cO:\cX\to\R^{n_y}$ be a bounded linear map called the
  \emph{observation operator}. The forward model
  $F(f):=\cO(u_f)\in\R^{n_y}$ then ``observes'' the solution of
  \eqref{eq:PDE} through the functional $\cO$.

  Given noisy observations $y=F(f)+\eta$ as in \eqref{eq:ip}, any
  method minimizing the objectives in
  \eqref{eq:regularized_lossfunction} or \eqref{eq:sampling_objective}
  has to access $\Phi$ (or its derivatives) and thus repeatedly
  evaluate the forward model $F$. Each such evaluation requires
  solving \eqref{eq:PDE}. Since \eqref{eq:PDE} has no closed form
  solution, $u_f$ can only be \emph{approximated} using a numerical
  PDE solver such as the finite element method (FEM).
\end{example}

\section{A unified multilevel optimization framework}\label{sec:ML_opti}
In order to minimize an objective $\Phi$ as in
\eqref{eq:general_opti}, we consider an abstract optimization method
described by the fixed point iteration
\begin{equation}\label{eq:Psi}
  x_{k+1} := \Psi (x_k),\quad x_0 \in\cX.
\end{equation}
For certain applications, an exact evaluation of $\Psi$ is either not
possible, or computationally infeasible. In such cases, typically
numerical approximations $\Psi_l$ to $\Psi$ are available. Here the
``level'' $l$ is a positive real number and can be understood as the
accuracy of the approximation---the larger $l$ the closer $\Psi$ and
$\Psi_l$ are. The precise meaning of this statement will be quantified
in the following.  At the same time, higher accuracy comes at higher
computational cost, which is accounted for by the assumption that one
evaluation of $\Psi_l$ amounts to computational cost $l$.

\begin{remark}
  In practice $\Psi_l$ might only be available for certain
  $l\in\N$. For simplicity we allow $l\in\R$, $l>0$, but mention that
  our analysis extends to the discrete case by rounding $l$ to the
  next larger admissible %
  level.
\end{remark}

Replacing $\Psi$ in the update rule \eqref{eq:Psi} %
with $\Psi_l$ leads to
\begin{equation}\label{eq:disc_seq}
  x_{k+1} =  \Psi_{l_k}(x_k),\quad x_0 \in\cX.
\end{equation}
Here, $l_k\in\N$ is the level in iteration $k$ of the optimization
process. The goal is to choose levels which minimize the overall
computational cost while achieving fast convergence.

We denote the error of the $k$th iterate $x_k$ by $e_k$. For example,
if %
$x_*$ is the unique minimizer of $\Phi$, $e_k$ could stand for
$\|x_k-x_*\|_{\cX}$, or for the distance of the objective to the
minimum, i.e.\ $\Phi(x_k)-\Phi(x_*)$. Our analysis is based on the
following abstract assumption. It can be understood as a form of
linear convergence, up to an additive term stemming from the
approximation of $\Psi$ by $\Psi_l$.

  \begin{assumption}\label{ass:disc}
    There exists $c \in (0,1)$ and $\alpha>0$ such that for any choice
    of levels $l_k\ge 1$ and with $x_{k}$ %
    as in \eqref{eq:disc_seq},
    \begin{enumerate}
    \item \label{item:errdecay}
      {\bf error decay:} for all $k\in\N$
      \begin{equation}\label{eq:disc}
        e_{k+1}
        \le c %
        e_k
        + l_k^{-\alpha}, %
      \end{equation}
    \item %
      {\bf cost model:} for all $k\in\N$, the cost of computing $x_k$
      in \eqref{eq:disc_seq} equals
      \begin{align}\label{eq:cost}
        {\rm cost}(x_k)=
        \sum_{j=0}^{k-1} l_j.
      \end{align}
    \end{enumerate}
  \end{assumption}

  We motivate Assumption~\eqref{ass:disc} by verifying
  conditions~\eqref{eq:disc} and~\eqref{eq:cost} on several examples.

\begin{example}\label{ex:gdagd}
  Suppose that the objective $\Phi$ is $L$-smooth and $\mu$-strongly
  convex, and that for each $l\in\N$ we have access to functions
  $g_l:\cX\to\cX$ or random variables $G_l(x)\in\cX$ for all
  $x\in\cX$, such that for some $0\leq \eta <\infty$
  \begin{equation}\label{eq:ass}
    \|\nabla\Phi(x)-g_l(x)\|\le  \frac{l^{-\alpha}}{\eta} \quad \text{\em  or } \quad\mathbb{E} \|\nabla\Phi(x)-G_l(x)\|\le  \frac{l^{-\alpha}}{\eta} \qquad\forall x\in\cX.
  \end{equation}
  Then gradient descent using the approximate gradients, i.e.\ iterates generated by
  \begin{equation*}
    x_{k+1}=x_k-\eta_kg_{l_k}(x_k)
  \end{equation*}
  can be shown to satisfy error decay~\eqref{eq:disc}.  Interpreting
  $l_k$ as the computational cost of evaluating $g_{l_k}$, the overall
  cost to compute $x_k$ follows our cost
  model~\eqref{eq:cost}. Similarly, accelerated gradient descent and
  the stochastic versions of both algorithms with dynamically
  increasing batch sizes can be shown to satisfy \eqref{eq:disc}.
\end{example}
Details for Example~\ref{ex:gdagd} and the implications of our results
for gradient descent, accelerated gradient descent and their
stochastic versions are %
given in~Appendix \ref{app:gdagd}.

\begin{remark}
  Assumption \ref{ass:disc} states that the relation between
  computational cost and corresponding error is of the type
  ``${\rm error}\sim{\rm cost}^{-\alpha}$'' for some $\alpha>0$. To
  clarify, consider the following examples in the context of applying
  gradient descent to minimize $\Phi$:
  \begin{itemize}
    \item Fix $\gamma>0$.  Suppose we have access to an
      algorithm, that for $n\in\mathbb{N}$ requires computational cost
      $f(n):=n^\gamma$ to compute $\nabla\Phi$ up to accuracy
      $n^{-\alpha}$. With $l:=n^\gamma$, this is equivalent to saying
      that at level $l$ the error is of order $l^{-\alpha/\gamma}$,
      which fits our setting. 
      Without loss of generality we can work with
      $l$ rather than $n$.

    \item Suppose we have access to an algorithm that requires time
      $t>0$ to approximate $\nabla\Phi$ up to accuracy
      $t^{-\alpha}$. Then the level $l_k$ can be understood as the CPU
      time $t$ invested in the \emph{approximate} computation of $\nabla\Phi(x_k)$.
    \end{itemize}
  \end{remark}

Next, we %
discuss gradient descent for our running Example \ref{ex:1}. Further
details are contained in Appendix \ref{app:running}.

\begin{example}[Continuation of Example \ref{ex:1}]\label{ex:2}
  Let $F(f)=\cO(u_f)$, where $u_f$ solves \eqref{eq:PDE} and
  $\cO:L^2(D)\to\R^{n_y}$ is bounded linear, i.e.\
  $\cO(p)=\int_D\xi p$ for some $\xi\in L^2(D,\R^{n_y})$ and all
  $p\in L^2(D)$.

  Let
  $\Phi(f)=\frac12\|\Gamma^{-1/2}(F(f)-y)\|_{\R^{n_y}}^2
  +\frac{\lambda}{2}\|f\|_{L^2(D)}^2$ as in
  \eqref{eq:regularized_lossfunction}.  Then
  \begin{equation}\label{eq:nablaPhiPDE}
    \nabla\Phi(f) = u_h+\lambda f\in L^2(D),
  \end{equation}
  where $u_h$ solves \eqref{eq:PDE} with right-hand side
  $h(\cdot)=(\cO(u_f)-y)^\top\Gamma^{-1}\xi(\cdot)\in L^2(D)$.  To
  approximate $\nabla\Phi(f)$, we use linear finite elements on a
  uniform mesh on $D\subseteq\R^2$ to first obtain an approximation
  $u_f^l$ satisfying $\|u_f-u_f^l\|_{L^2(D)}\lesssim l^{-1}$, and
  subsequently with $\tilde h=(\cO(u_f^l)-y)^\top\Gamma^{-1}\xi$, a
  FEM approximation $u_{\tilde h}^l$ satisfying
  $\|u_h-u_{\tilde h}^l\|_{L^2(D)}\lesssim l^{-1}$. Here $l$
  corresponds to the dimension of the FEM space, and can thus be
  interpreted as the complexity of computing $u_{\tilde h}^l$. %
  Note, $g_l(f):=u_{\tilde h}^l+\lambda f\in L^2(D)$ yields an
  approximation to $\nabla\Phi(f)$ s.t.\
  $\|\nabla\Phi(f)-g_l(f)\|_{L^2(D)}\lesssim l^{-1}$. Hence (for fixed
  $f$ and up to a constant) $g_l(f)$ satisfies the first inequality in
  \eqref{eq:ass} with $\alpha=1$.
\end{example}

Having established %
the basic setting, we next illustrate how accuracy levels $l_j$ can be
chosen %
optimally to minimize computational costs. Recursively expanding
\eqref{eq:disc}, we get the following upper bound on the error
\begin{align}\label{eq:ek}
  e_k\le c (ce_{k-2}+l_{k-1}^{-\alpha})+l_k^{-\alpha}
  \le\cdots\le
  c^{k} e_0 + \sum_{j=0}^{k-1} c^{k-1-j} l_j^{-\alpha} =:\tilde e_k((l_j)_{j}).
\end{align}
In case $l_j=l$ for all $j=1,\dots,K-1$, we will also use the notation
$\tilde e_k(l)$.

We next determine %
levels achieving (almost) minimal cost under the constraint
$\tilde e_k%
\le\eps$.

\subsection{Single-level}\label{sec:SL}
Fix the number of iteration steps $K \in \N$. %
For the \emph{single-level method}, the level $l_j$ is fixed at a
(single) value $\bar l_K>0$ throughout the whole iteration
$j=0,\dots,K-1$. By assumption \eqref{eq:cost},
${\rm cost}(x_K) = K{\bar l_K}$.  We wish to minimize the cost under
the error constraint $\tilde e_K\le\eps$. To slightly simplify the
problem for the moment, we instead demand both terms %
in the definition of $\tilde e_k$ in \eqref{eq:ek} to be bounded by
$\frac{\eps}{2}$ (so that in particular $\tilde e_K\le \eps$). More
precisely, ${\bar l_K}>0$ should be minimal such that
\begin{align}\label{eq:lK}
  c^{K} e_0\le \frac{\eps}{2},\qquad ({\bar l_K})^{-\alpha}\frac{1-c^K}{1-c} \le \frac{\eps}{2}.
\end{align}
The first inequality implies $K\ge \frac{\log(\eps/(2e_0))}{\log(c)}$,
and the second inequality implies
\begin{align}\label{eq:lKSL}
  {\bar l_K} \geq \left(2\frac{1-c^K}{(1-c)\eps}\right)^{1/\alpha}.
\end{align}

We choose ${\bar l_K}$ so that \eqref{eq:lKSL} holds with
equality. Given $K\mapsto {\rm cost}(x_K)=Kl_{K}$ is monotonically
increasing, in order to get error $\eps$ at possibly small cost the
following choices suffice:
\begin{align}\label{eq:SL}
  {\bar l_K}(\eps) := \left(2\frac{1-\frac{\eps}{2e_0}}{(1-c)\eps}\right)^{1/\alpha},\qquad
  K(\eps) := \left\lceil\frac{\log(\eps/(2e_0))}{\log(c)}\right\rceil.
\end{align}
We next introduce a notion of optimality, and then summarize our
observations in Theorem \ref{thm:SL}.

\begin{definition}[Quasi-optimal single level choice]\label{def:qosl}
  A family %
  of reals ${\bar l_K}(\eps)>0$ and integers $K(\eps)\in\N$ %
  satisfying $\tilde e_{K(\eps)}({\bar l_K}(\eps))\le\eps$ for all
  $\eps>0$, is a \emph{quasi-optimal single level choice} iff
  \begin{equation*}
    K(\eps){\bar l_K}(\eps) =
    O\big(\inf\{\hat K\hat l\,:\,\tilde e_{\hat K}(\hat l)\le\eps\},~\hat K\in\N,~\hat l>0 \big)
    \qquad\text{as}\quad\eps\to 0.
  \end{equation*}
\end{definition}

\begin{theorem}[Single-level convergence]\label{thm:SL}
  Equation \eqref{eq:SL} defines a quasi-optimal single level choice.
  It holds
  \begin{equation}\label{eq:costSL}
    {\rm cost}_{\rm SL}(\eps):= K(\eps){\bar l_K}(\eps)\simeq
    \log(\eps^{-1})\eps^{-\frac{1}{\alpha}}
    \qquad\text{as}\quad\eps\to 0.
  \end{equation}
\end{theorem}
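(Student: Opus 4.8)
The plan is to establish two things: first, that the explicit choice \eqref{eq:SL} produces iterates with $\tilde e_{K(\eps)}(\bar l_K(\eps))\le\eps$ and has cost $\simeq\log(\eps^{-1})\eps^{-1/\alpha}$; second, that no admissible pair $(\hat K,\hat l)$ can do asymptotically better, which is the quasi-optimality claim. I would open by verifying that \eqref{eq:SL} is admissible. By construction $\bar l_K(\eps)$ is exactly the value making \eqref{eq:lKSL} an equality (after substituting $c^K\le\eps/(2e_0)$, which holds by the ceiling in the definition of $K(\eps)$), and $K(\eps)$ is exactly the smallest integer making $c^{K}e_0\le\eps/2$. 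Hence both terms in $\tilde e_K$ from \eqref{eq:ek} are $\le\eps/2$, so $\tilde e_{K(\eps)}(\bar l_K(\eps))\le\eps$. This is routine.

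Next I would compute the cost. As $\eps\to0$ we have $K(\eps)=\lceil\log(\eps/(2e_0))/\log(c)\rceil\simeq\log(\eps^{-1})$ (using $0<c<1$ so $\log c<0$), and $\bar l_K(\eps)=\big(2(1-\tfrac{\eps}{2e_0})/((1-c)\eps)\big)^{1/\alpha}\simeq\eps^{-1/\alpha}$, since the numerator $2(1-\eps/(2e_0))\to 2$. Multiplying, ${\rm cost}_{\rm SL}(\eps)=K(\eps)\bar l_K(\eps)\simeq\log(\eps^{-1})\eps^{-1/\alpha}$, which is \eqref{eq:costSL}.

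The substantive part is the lower bound: for \emph{any} $\hat K\in\N$ and $\hat l>0$ with $\tilde e_{\hat K}(\hat l)\le\eps$, one needs $\hat K\hat l\gtrsim\log(\eps^{-1})\eps^{-1/\alpha}$. The idea is that $\tilde e_{\hat K}(\hat l)=c^{\hat K}e_0+\hat l^{-\alpha}\tfrac{1-c^{\hat K}}{1-c}\le\eps$ forces each summand to be $\le\eps$ individually. From $c^{\hat K}e_0\le\eps$ we get $\hat K\ge\log(\eps/e_0)/\log(c)\gtrsim\log(\eps^{-1})$ for small $\eps$. From $\hat l^{-\alpha}\tfrac{1-c^{\hat K}}{1-c}\le\eps$ and the fact that $1-c^{\hat K}\ge 1-c^{1}=1-c$ for all $\hat K\ge1$ (so the prefactor is $\ge1$), we get $\hat l^{-\alpha}\le\eps$, i.e.\ $\hat l\ge\eps^{-1/\alpha}$. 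Multiplying the two bounds yields $\hat K\hat l\gtrsim\log(\eps^{-1})\eps^{-1/\alpha}$, so the infimum over admissible pairs is $\gtrsim\log(\eps^{-1})\eps^{-1/\alpha}$; combined with the matching upper bound from \eqref{eq:SL} this gives $K(\eps)\bar l_K(\eps)=O(\inf\{\cdots\})$, i.e.\ quasi-optimality in the sense of Definition \ref{def:qosl}, and simultaneously pins down ${\rm cost}_{\rm SL}(\eps)\simeq\log(\eps^{-1})\eps^{-1/\alpha}$.

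The main obstacle is handling the case $\hat K=1$ cleanly in the lower bound (the prefactor $\tfrac{1-c^{\hat K}}{1-c}$ degenerates toward $1$ there rather than blowing up), and more generally making sure the two error terms can indeed be decoupled — i.e.\ that requiring their sum to be $\le\eps$ does not allow a clever trade-off that beats the rates. Since both terms are nonnegative, each is bounded by the sum, so the decoupling is immediate and no trade-off helps; the only care needed is the uniform lower bound $\tfrac{1-c^{\hat K}}{1-c}\ge1$ for $\hat K\ge1$, which is elementary. A minor subtlety is absorbing the $e_0$-dependent constants into the $O(\cdot)$ and $\simeq$ notation, which is permitted since $e_0$ and $c$ are fixed.
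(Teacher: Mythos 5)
Your proposal is correct and follows essentially the same route as the paper: both split $\tilde e_{\hat K}$ into the two nonnegative terms $c^{\hat K}e_0$ and $\hat l^{-\alpha}\tfrac{1-c^{\hat K}}{1-c}$, bound each by $\eps$, and deduce matching lower bounds on $\hat K$ and $\hat l$. The only (harmless) differences are that you obtain the lower bound directly via $\tfrac{1-c^{\hat K}}{1-c}\ge 1$, giving $\hat K\gtrsim\log(\eps^{-1})$ and $\hat l\ge\eps^{-1/\alpha}$, where the paper instead compares $(\hat K,\hat l)$ with its own choice at tolerance $2\eps$, and that your intermediate claim that both terms of $\tilde e_{K(\eps)}(\bar l_K(\eps))$ are $\le\eps/2$ is not literally exact (the second term can slightly exceed $\eps/2$ because \eqref{eq:SL} uses $1-\tfrac{\eps}{2e_0}$ in place of $1-c^{K(\eps)}$), though the total is still $\le\eps$ for $\eps\in(0,e_0)$, matching the paper's level of rigor.
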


\begin{proof}
  For the proof see Appendix~\ref{app:proof_Prop_SL}.
\end{proof}

Due to the quasi-optimality, the (single-level-) cost behaviour
\eqref{eq:costSL} cannot be improved as $\eps\to 0$.

\subsection{Multilevel}\label{sec:ML}
Fix the number of iterations $K\in\N$.  We now allow for varying
levels throughout the optimization process. That is, we wish to find
$l_{K,j}(\eps)=l_{K,j}>0$ such that
${\rm cost}(x_K)=\sum_{j=0}^{K-1} l_{K,j}$ is minimized under the
constraint of both terms in \eqref{eq:ek} being bounded by
$\frac{\eps}{2}$, i.e.\ such that
\begin{align}\label{eq:Lkj}
  c^{K} e_0\le\frac{\eps}{2},\qquad \sum_{j=0}^{K-1} c^{K-1-j} l_{K,j}^{-\alpha}\le\frac{\eps}{2}.
\end{align}

The first condition gives again a lower bound on the number of
iterations $K$ as in Sec.~\ref{sec:SL}. Minimizing ${\rm cost}(x_k)$
under the second condition gives:
\begin{lemma}\label{lemma:ML}
  For every $K\in\N$, $\eps>0$
  \begin{align}\label{eq:optimal_level}
    l_{K,j}(\varepsilon) = C_{K,\eps}\cdot c^{\frac{K-1-j}{1+\alpha}},\quad C_{K,\eps} = \left(\frac{\varepsilon}2\right)^{-\frac1\alpha} \left(
    \frac{1-c^{\frac{K}{1+\alpha}}}{1-c^{\frac{1}{1+\alpha}}}
    \right)^{\frac1\alpha}
  \end{align}
  minimizes $\sum_{j=0}^{K-1}l_{K,j}$ under the constraint
  $\sum_{j=0}^{K-1} c^{K-1-j} l_{K,j}^{-\alpha}\le\frac{\eps}{2}$.
\end{lemma}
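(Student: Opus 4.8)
The plan is to recognize Lemma~\ref{lemma:ML} as a convex optimization problem and to solve it explicitly. I would first guess the optimal shape of the levels via Lagrange multipliers, and then certify optimality rigorously by a single application of Hölder's inequality, which simultaneously yields the minimizer and the optimal cost; alternatively one can invoke convexity so that the stationary point found below is automatically a global minimizer.

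First I would note that the constraint must be active at any minimizer: the cost $\sum_{j=0}^{K-1}l_{K,j}$ is strictly increasing in each $l_{K,j}$, whereas $\sum_{j=0}^{K-1}c^{K-1-j}l_{K,j}^{-\alpha}$ is strictly decreasing in each $l_{K,j}$ (recall $\alpha>0$), so one can always lower the cost until $\sum_{j=0}^{K-1}c^{K-1-j}l_{K,j}^{-\alpha}=\eps/2$. Differentiating the Lagrangian $\sum_j l_j+\mu\big(\sum_j c^{K-1-j}l_j^{-\alpha}-\eps/2\big)$ in $l_j$ and setting it to zero gives $l_j^{1+\alpha}=\mu\alpha\,c^{K-1-j}$, i.e.\ $l_j\propto c^{(K-1-j)/(1+\alpha)}$ — exactly the claimed form, the constant being fixed afterwards by the active constraint.

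To turn this into a proof, write $w_j:=c^{K-1-j}$ and use, for any $l_j>0$, the pointwise identity $w_j^{1/(1+\alpha)}=l_j^{\alpha/(1+\alpha)}\big(w_j l_j^{-\alpha}\big)^{1/(1+\alpha)}$. Summing over $j$ and applying Hölder's inequality with the conjugate exponents $p=\tfrac{1+\alpha}{\alpha}$ and $q=1+\alpha$ (so $\tfrac1p+\tfrac1q=1$, $a_j^p=l_j$, $b_j^q=w_jl_j^{-\alpha}$) yields
\[
  \sum_{j=0}^{K-1} w_j^{1/(1+\alpha)}\ \le\ \Big(\sum_{j=0}^{K-1} l_j\Big)^{\alpha/(1+\alpha)}\Big(\sum_{j=0}^{K-1} w_j l_j^{-\alpha}\Big)^{1/(1+\alpha)}.
\]
Raising to the power $\tfrac{1+\alpha}{\alpha}$ and using that $t\mapsto t^{-1/\alpha}$ is decreasing (so the feasibility constraint $\sum_j w_j l_j^{-\alpha}\le\eps/2$ gives $(\sum_j w_j l_j^{-\alpha})^{-1/\alpha}\ge(\eps/2)^{-1/\alpha}$), I obtain the lower bound $\sum_{j=0}^{K-1} l_j\ge\big(\sum_{j=0}^{K-1} w_j^{1/(1+\alpha)}\big)^{(1+\alpha)/\alpha}(\eps/2)^{-1/\alpha}$, valid for every feasible choice of levels.

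Finally I would verify that the stated $l_{K,j}(\eps)=C_{K,\eps}\,c^{(K-1-j)/(1+\alpha)}$ attains this bound. Since $w_j^{1/(1+\alpha)}=c^{(K-1-j)/(1+\alpha)}$ and $c\in(0,1)$ forces $c^{1/(1+\alpha)}<1$, the geometric sum gives $\sum_{j=0}^{K-1} w_j^{1/(1+\alpha)}=\tfrac{1-c^{K/(1+\alpha)}}{1-c^{1/(1+\alpha)}}$; substituting the ansatz into the constraint and solving the resulting scalar equation reproduces precisely $C_{K,\eps}$ and shows the constraint is active, so equality holds in Hölder (the equality condition $l_j^{1+\alpha}\propto w_j$ is met). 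Hence $l_{K,j}(\eps)$ is feasible with cost equal to the lower bound, and is therefore a minimizer. I do not expect a real obstacle: the only care needed is bookkeeping of the Hölder exponents, tracking the decreasing direction when passing from $\le\eps/2$ to $\ge(\eps/2)^{-1/\alpha}$, and using $c\in(0,1)$ for the geometric series; and the equality analysis can be skipped entirely by noting that $l\mapsto l^{-\alpha}$ is convex on $(0,\infty)$, so the problem is convex and the Lagrange stationary point is automatically the global minimum.
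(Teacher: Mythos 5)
Your proof is correct, and it takes a somewhat different route from the paper. The paper's argument is exactly your ``guessing'' step: it introduces a Lagrange multiplier, derives the first-order conditions $1-\lambda\alpha\,a_jl_j^{-(1+\alpha)}=0$ with $a_j=c^{K-1-j}$, concludes $l_j\propto a_j^{1/(1+\alpha)}$, and fixes the constant by substituting into the (active) constraint --- it stops there, without an explicit certificate that this stationary point is a global minimizer. You supply that certificate in two independent ways: the H\"older argument gives, for \emph{every} feasible choice of levels, the lower bound
\begin{equation*}
  \sum_{j=0}^{K-1} l_j \;\ge\; \Bigl(\sum_{j=0}^{K-1} c^{\frac{K-1-j}{1+\alpha}}\Bigr)^{\frac{1+\alpha}{\alpha}}\Bigl(\frac{\eps}{2}\Bigr)^{-\frac1\alpha},
\end{equation*}
which the stated $l_{K,j}(\eps)$ attains with an active constraint (your exponent bookkeeping $p=\tfrac{1+\alpha}{\alpha}$, $q=1+\alpha$ and the verification of equality are correct); alternatively, convexity of $l\mapsto l^{-\alpha}$ on $(0,\infty)$ makes the problem convex, so the KKT point (with multiplier $\mu>0$) is automatically globally optimal. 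What your version buys is rigor and, as a by-product, the explicit optimal cost value, which is precisely the quantity the paper recomputes separately in \eqref{eq:costML}; what the paper's version buys is brevity. Your preliminary observation that the constraint must be active at any minimizer is also sound and not spelled out in the paper.
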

\begin{proof}
  For the proof see Appendix~\ref{app:proof_Lemma_ML}.
\end{proof}

Let us compute the cost. Since
$\sum_{j=0}^{K-1} \delta^{K-1-i}=\frac{1-\delta^{K}}{1-\delta}$ for
$\delta=c^{\frac{1}{1+\alpha}}\in (0,1)$,
\begin{align}\label{eq:costML}
  \sum_{j=0}^{K-1}l_{K,j}
  = C_{K,\eps} \sum_{j=0}^{K-1} c^{\frac{K-1-j}{1+\alpha}}
  =\left(\frac{\eps}{2}\right)^{-\frac{1}{\alpha}}
  \left(\frac{1-c^{\frac{K}{1+\alpha}}}{1-c^{\frac{1}{1+\alpha}}} \right)^{\frac{1+\alpha}{\alpha}}.
\end{align}
As in the single-level case, this term increases in $K$ (although it
remains bounded as $K\to\infty$). To keep the cost minimal, we choose
$K$ minimal under the first constraint in \eqref{eq:Lkj}, which leads
to %
\begin{align}\label{eq:ML}
  K(\eps) = \left\lceil\frac{\log(\eps/(2e_0))}{\log(c)}\right\rceil,\qquad
  l_{K,j}(\eps) = 
  \left(\frac{\eps}{2}\right)^{-\frac{1}{\alpha}}
  c^{\frac{K(\eps)}{1+\alpha}}
  c^{-\frac{1+j}{1+\alpha}}
  \Bigg(\frac{1-c^{\frac{K(\eps)}{1+\alpha}}}{1-c^{\frac{1}{1+\alpha}}} \Bigg)^{\frac{1+\alpha}{\alpha}}\qquad\forall j<K(\eps).
\end{align}
Observing that $c^{\frac{K(\eps)}{1+\alpha}}$ behaves like
$\eps^{\frac{1}{1+\alpha}}$, and $1-c^{\frac{K(\eps)}{1+\alpha}}\to 1$
as $\eps\to 0$, we find
$l_{K,j}(\eps)\simeq
\eps^{-\frac{1}{\alpha(1+\alpha)}}c^{-\frac{1+j}{1+\alpha}}$, with
lower and upper bounds independent of $\eps$, $K$ and $j$.

\begin{definition}[Quasi-optimal multilevel choice]\label{def:qoml}
  A family $((l_{K,j}(\eps))_{j<K(\eps)})_{\eps>0}$ of sequences
  satisfying $\tilde e_{K(\eps)}((l_{K,j}(\eps))_{j<K(\eps)})\le\eps$
  for all $\eps>0$ is a \emph{quasi-optimal multilevel choice}, iff
  \begin{align*}
    \sum_{j=0}^{K(\eps)-1}l_{K,j}(\eps) =
    O\bigg(\inf\bigg\{\sum_{j=0}^{\hat K-1}\hat l_{j}\,:\,\tilde e_{\hat K}((\hat l_j)_{j<\hat K})\le\eps,~\hat K\in\N,~\hat l_j>0~\forall j<\hat K\bigg\} \bigg)
    \qquad\text{as}\quad\eps\to 0.
  \end{align*}
\end{definition}

\begin{theorem}[Multilevel convergence]\label{thm:ML}
  Equation \eqref{eq:ML} defines a quasi-optimal multilevel choice for
  $\eps\in (0,e_0)$. It holds
  \begin{align}\label{eq:costMLasymp}
    {\rm cost}_{\rm ML}(\eps):=\sum_{j=0}^{K(\eps)-1}l_{K,j}(\eps) \simeq \varepsilon^{-\frac1\alpha}\qquad\text{as}\quad\eps\to 0.
  \end{align}
\end{theorem}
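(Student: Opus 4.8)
\textbf{Proof proposal for Theorem \ref{thm:ML}.}

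The plan is to split the statement into two parts: first, verify that \eqref{eq:ML} produces an admissible choice (i.e.\ that $\tilde e_{K(\eps)}((l_{K,j}(\eps))_{j<K(\eps)})\le\eps$), and second, establish the asymptotic cost \eqref{eq:costMLasymp} together with quasi-optimality. For the first part, I would observe that $K(\eps)$ is chosen so the first constraint in \eqref{eq:Lkj} holds, i.e.\ $c^{K(\eps)}e_0\le\eps/2$, which bounds the first term in $\tilde e_{K(\eps)}$ by $\eps/2$; and the levels in \eqref{eq:ML} are exactly those from Lemma \ref{lemma:ML} specialized to $K=K(\eps)$, so by that lemma the second term $\sum_{j=0}^{K(\eps)-1}c^{K(\eps)-1-j}l_{K,j}(\eps)^{-\alpha}$ equals $\eps/2$. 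Adding the two bounds gives $\tilde e_{K(\eps)}\le\eps$, so the choice is admissible.

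For the cost asymptotics, I would start from the closed-form expression \eqref{eq:costML}, which with $K=K(\eps)$ reads
\begin{equation*}
  {\rm cost}_{\rm ML}(\eps)=\left(\frac{\eps}{2}\right)^{-\frac{1}{\alpha}}\left(\frac{1-c^{\frac{K(\eps)}{1+\alpha}}}{1-c^{\frac{1}{1+\alpha}}}\right)^{\frac{1+\alpha}{\alpha}}.
\end{equation*}
Here $c^{\frac{1}{1+\alpha}}\in(0,1)$ is a fixed constant, so the denominator is a fixed positive constant. Since $K(\eps)=\lceil\log(\eps/(2e_0))/\log(c)\rceil\to\infty$ as $\eps\to0$ and $c\in(0,1)$, we have $c^{\frac{K(\eps)}{1+\alpha}}\to0$, hence the bracketed factor converges to $(1-c^{\frac{1}{1+\alpha}})^{-\frac{1+\alpha}{\alpha}}$, a positive constant. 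More precisely, using $c^{K(\eps)}\le \eps/(2e_0)$ and $c^{K(\eps)}> c\cdot\eps/(2e_0)$ (from the ceiling), the bracketed factor is bounded above and below by fixed constants for $\eps\in(0,e_0)$, so ${\rm cost}_{\rm ML}(\eps)\simeq\eps^{-1/\alpha}$, which is \eqref{eq:costMLasymp}.

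Finally, for quasi-optimality I must show that any admissible multilevel choice $(\hat K,(\hat l_j)_{j<\hat K})$ with $\tilde e_{\hat K}((\hat l_j))\le\eps$ has cost $\gtrsim\eps^{-1/\alpha}$. The key point is that $\tilde e_{\hat K}\le\eps$ forces in particular $\sum_{j=0}^{\hat K-1}c^{\hat K-1-j}\hat l_j^{-\alpha}\le\eps$; by Lemma \ref{lemma:ML} (applied with $2\eps$ in place of $\eps$, so the constraint matches), the minimal value of $\sum_j\hat l_j$ over all sequences satisfying this constraint with a fixed $\hat K$ is $\eps^{-1/\alpha}\big((1-c^{\hat K/(1+\alpha)})/(1-c^{1/(1+\alpha)})\big)^{(1+\alpha)/\alpha}$ up to the factor $2^{-1/\alpha}$, and the bracketed factor is $\ge$ a fixed positive constant (taking $\hat K\ge1$, the numerator is $\ge 1-c^{1/(1+\alpha)}>0$). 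Taking the infimum over $\hat K$ as well does not help, since the bound is increasing in $\hat K$; thus the infimum over all admissible $(\hat K,(\hat l_j))$ is $\gtrsim\eps^{-1/\alpha}$. Comparing with the upper bound from the previous paragraph gives quasi-optimality in the sense of Definition \ref{def:qoml}. The main obstacle is the quasi-optimality lower bound: I need to be careful that Lemma \ref{lemma:ML} genuinely characterizes the \emph{minimum} of the cost over all admissible level sequences for each fixed $\hat K$, and that dropping the first-term contribution of $\tilde e_{\hat K}$ only weakens the constraint (hence only decreases the optimal cost), so the resulting bound is still a valid lower bound for the true infimum.
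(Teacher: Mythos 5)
Your proposal is correct and follows essentially the same route as the paper: the cost asymptotics come from the closed form \eqref{eq:costML} with the bracketed factor bounded above and below by constants (cf.\ Remark~\ref{rmk:C}), and the quasi-optimality lower bound comes from relaxing the admissibility constraint to the second error term $\tilde e_{\hat K,2}$ and invoking the minimality statement of Lemma~\ref{lemma:ML}. The only cosmetic difference is that the paper compares an arbitrary admissible choice with the concrete choice at tolerance $2\eps$ (via minimality of $K(2\eps)$ and monotonicity in $\hat K$), whereas you bound the bracketed factor below by $1$ using only $\hat K\ge 1$; both yield the same $\eps^{-1/\alpha}$ lower bound, and your slight hedging about the factor $2^{-1/\alpha}$ is immaterial for the $\simeq$ conclusion.
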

\begin{proof}
  For the proof see Appendix~\ref{app:proof_Prop_ML}.
\end{proof}

Due to the quasi-optimality, the asymptotic cost behaviour
$O(\eps^{-1/\alpha})$ required to achieve error $\tilde e_K\le \eps$
cannot be improved. Comparing with the single-level method in Theorem
\ref{thm:ML}, we observe that the multilevel method decreases the
computational cost by a factor $\log(\eps^{-1})$. In practice and for
small $\eps>0$, this can amount to a significant speedup as we will
see in our numerical examples.

In Appendix \ref{app:gdagd} we provide further details of the
implications of Theorem \ref{thm:ML} for gradient descent, accelerated
gradient descent and the stochastic versions of these algorithms. As
an application we discuss a stochastic gradient descent algorithm that
uses increasing batch sizes in Example \ref{ex:dynamicsgd}.

  \begin{remark}\label{rmk:constant}
    Suppose that $e_k$ generated with levels $l_j$ satisfies instead
    of \eqref{eq:disc} the relaxed condition
    $e_{k+1}\le ce_k+Cl_k^{-\alpha}$ for some constant $C\ge 1$.
    Then, $\tilde e_k$ generated with the levels
    $\tilde l_k:=C^{1/\alpha}l_k$ satisfies
    $\tilde e_{k+1}\le c\tilde e_k+C \tilde l_k^{-\alpha}=c\tilde e_k+
    l_k^{-\alpha}$. %
    The cost quantity $\sum_{j=0}^{k-1}\tilde l_j$ only increases by
    the constant factor $C^{1/\alpha}$ compared to
    $\sum_{j=0}^{k-1}l_j$. Hence, the asymptotic cost behaviour stated
    in Theorem \ref{thm:SL} (single-level) and Theorem \ref{thm:ML}
    (multi-level) remains valid also for $C>1$.
  \end{remark}

  We next continue our discussion of Example \ref{ex:2}, for details
  see Appendix \ref{app:running_conv}.

\begin{example}[Continuation of Example \ref{ex:2}]\label{ex:3}
  The regularized objective $\Phi$ in Example \ref{ex:2} is
  $\lambda$-strongly convex and $L$-smooth with
  $L=\|\xi\|_{L^2(D)}^2\|\Gamma^{-1}\|+\lambda$. %
  Consider the multilevel gradient descent method
  $f_{j+1}=f_j-\eta g_{l_j}(f_j)$, where $g_l$ is the approximation to
  $\nabla\Phi$ from Example \ref{ex:2}. The level choice \eqref{eq:ML}
  then yields $\|f_{*}-f_{K(\eps)}\|_{L^2(D)}\lesssim \eps$, for the
  unique minimizer $f_*$ of $\Phi$. The cost quantity, which %
  corresponds to the aggregated computational cost of all required FEM
  approximations to compute $f_{K(\eps)}$, behaves like $\eps^{-1}$ as
  $\eps\to 0$.
\end{example}

Finally we point out that our analysis and notion of quasi-optimality
are based on the constraint $\tilde e_k\le\eps$ (rather than
$e_k\le\eps$), where $\tilde e_k$ is an upper bound of the actual
error $e_k$. In Appendix \ref{app:opt_rate} we give a concrete example
of biased gradient descent to show that the cost asymptotics in
\eqref{eq:costMLasymp} is in general sharp for the actual error $e_k$
as well.

\section{Particle based optimization: A multilevel ensemble Kalman
  inversion}\label{sec:MLEKI}
As our first application, we present a multilevel ensemble Kalman
inversion (EKI) to solve the problem presented in
Sec.~\ref{sec:regopt}. EKI is a derivative free particle based
optimization method, e.g., \cite{SchSt2017, BSWW19, KS2019}. We first
recall the method, and subsequently present a multilevel version.

\subsection{Ensemble Kalman inversion}
EKI refers to a specific dynamical system describing the evolution of
an ensemble of particles.  By the well-known subspace property
\cite{ILS2013}, these particles remain within the finite dimensional
affine subspace spanned by the ensemble at initialization. Therefore,
there is no loss of generality in assuming
$\mathcal X=\mathbb R^{n_x}$ finite dimensional throughout this
section.

We formulate the EKI as a method to minimize the objective
\begin{equation}\label{eq:EKIobjective}
  \Phi(x) = \frac12\|\Sigma^{-1/2}(H(x)-z)\|^2.
\end{equation}
Here $H:\mathbb R^{n_x}\to\mathbb R^{n_z}$ is the %
forward model %
and $\Sigma\in\mathbb R^{n_z\times n_z}$ is SPD. Letting
\begin{align}\label{eq:EKIdata}
  H=F,\qquad
  z=y\in\mathbb R^{n_y},\qquad
  \Sigma=\Gamma,
\end{align}
\eqref{eq:EKIobjective} corresponds to the %
objective in \eqref{eq:regularized_lossfunction} with $R=0$ (i.e.\
unregularized).  Fixing an SPD matrix $C_0\in R^{n_x\times n_x}$,
\begin{align}\label{eq:TEKIdata}
  H%
  = \begin{pmatrix}F%
    \\ {\mathrm{Id}}\end{pmatrix},\quad z = \begin{pmatrix}y\\ {\bf{0}}_{\mathbb R^{n_x}} \end{pmatrix},\quad \Sigma = \begin{pmatrix}\Gamma &0\\ 0 & \frac{1}{\lambda}C_0\end{pmatrix}, 
\end{align}
yields the regularized objective $\Phi(x)$ in
\eqref{eq:regularized_lossfunction} with regularizer
$R(x) = \frac{\lambda}{2}\|C_0^{-1/2}x\|^2$. We refer to EKI applied
to the unregularized and regularized objective as \emph{standard EKI}
and %
\emph{Tikhonov regularized EKI (TEKI)}, respectively. See
\cite{CST2019,WCST2021} for more details on %
TEKI.

We consider the continuous-time formulation of EKI
\cite{BSW2018,BSWW2021}: %
For a fixed ensemble size $M\in\N$, let $v_t^{(m)}\in\R^{n_x}$,
$m=1,\dots,M$, satisfy the coupled system of stochastic differential
equations (SDEs)
\begin{align}\label{eq:EKI_cont}
  \begin{split} 
    {\mathrm d}v_t^{(m)} &= C^{v,H}(v_t)\Sigma^{-1}(z-H(v_t^{(m)}))\,{\mathrm d}t + C^{v,H}(v_t)\Sigma^{-1/2}\,{\mathrm d}W_t^{(m)},\\
    &v_0^{(m)}\overset{\text{i.i.d.}}{\sim}Q_0 \qquad m=1,\dots,M.
  \end{split}
\end{align}
Here $W_t^{(m)}$ are independent $\mathbb R^{n_z}$-valued Brownian
motions, $Q_0$ %
is a fixed initial distribution with finite second moment on
$\R^{n_x}$, and $C^{v,H}\in\R^{n_x\times n_z}$ denotes a mixed sample
covariance, see Appendix~\ref{app:notation} for the precise formula. %
Under certain assumptions, it can be shown that %
\eqref{eq:EKI_cont} is well-posed, i.e.~existence of unique and strong
solutions %
can be guaranteed, and their average converges to the minimizer of
$\Phi$ (\cite{BSWW19}). %

To motivate %
this behaviour, suppress for the moment the diffusion term in
\eqref{eq:EKI_cont}, and consider a linear forward map %
{$H\in\mathbb R^{n_z\times n_x}$}. Then %
\begin{align*}
  \frac{{\mathrm d}v_t^{(m)}}{{\mathrm d}t} = -C(v_t)H^\top \Sigma^{-1}(Hv_t^{(m)}-z) = -C(v_t)\nabla_v\Phi(v_t^{(m)}),
\end{align*}
for $\Phi(x) = \frac12\|\Sigma^{-1/2}(Hx-z)\|^2$. Hence, in the linear
and deterministic setting the EKI %
is a preconditioned gradient flow w.r.t.~the data misfit $\cV$ (or
w.r.t.~the Tikhonov regularized data misfit).  We refer to
  \cite{Chada2019ConvergenceAO,W2022} for %
{more details} on the nonlinear setting.

\subsection{Multilevel ensemble Kalman inversion}
To formulate the multilevel EKI, assume given approximations $H_l$,
$l>0$, to $H$ in \eqref{eq:EKI_cont}.  {Fixing $\tau >0$, with
  $t_j:=j\cdot \tau $ we denote by $v_{t_{j+1}}^{(m),l_j}$ the
  solution of the coupled system of SDEs in integral form %
  \begin{equation}\label{eq:EKI_level_cont}
    v_{t_{j+1}}^{(m),l_j} \hspace{-2pt} =\hspace{-2pt} v_{t_j}^{(m),l_{j-1}}\hspace{-2pt}+\hspace{-2pt}\int_{t_j}^{t_{j+1}} C^{v,H_{l_j}}(v_t^{l_j})\Sigma^{-1}(z-H_{l_j}(v_t^{(m),l_j}))\,{\mathrm d}t + \int_{t_j}^{t_{j+1}}C^{v,H_{l_j}}(v_t^{l_j})\Sigma^{-1/2}\,{\mathrm d}W_t^{(m)},
  \end{equation}
  initialized via $v_{t_j}^{(m),l_j} = v_{t_{j}}^{(m),l_{j-1}}$.
  We then introduce a discrete-time process $(x_j)_{j}$ as the
  ensemble mean of the particles at time $t_j$:}
\begin{align}\label{eq:EKIx}
  x_{j} %
  := \frac1M\sum_{m=1}^M v_{t_j}^{(m),l_j}.
\end{align}

We discuss a discretization scheme for the SDE
\eqref{eq:EKI_level_cont} and summarize the multilevel EKI as an
algorithm in Appendix~\ref{app:algo_ML_EKI}.  {The goal in the
  following is to show that $x_j$ approximately minimizes the
  objective \eqref{eq:EKIobjective} for large $j$.}

\subsection{Error analysis for linear forward operators}
Assume $F$ to be linear, specifically $F\in\mathbb R^{n_y\times n_x}$
with $n_y<n_x$ and ${\rm{rank}}(FF^\top) = n_y$. We make the following
assumptions on the forward operator arising through numerical
approximation which is typically satisfied for ODE- or PDE-based
forward models. We verify this assumption for Example~\ref{ex:1} in
Appendix~\ref{app:running}, but emphasize that this is typically
satisfied and known for numerical approximations (e.g., using finite
elements or boundary elements) to forward maps described by PDE
solutions, see for example \cite{MR4269305,MR2743235}.
\begin{assumption}[Approximation of the forward
  operator]\label{ass:forwardmodel}
  There exist $b>0$ and $\alpha>0$ such that for each $l>0$ there
  exists $F_l\in\R^{n_y\times n_x}$ satisfying
  $\|F-F_l\|_{\mathbb R^{n_y\times n_x}}^2 \le b l^{-\alpha}$.
\end{assumption}

In the following we use the notation $H_l$ to denote $H$ as in
\eqref{eq:EKIdata} or \eqref{eq:TEKIdata} but with $F$ replaced by
$F_l$, i.e.~$H_l\in\mathbb R^{n_z\times n_x}$.  We consider EKI and
TEKI with \emph{covariance inflation}, which is a standard data
assimilation tool to stabilize the scheme, e.g.,
\cite{JLA07,JLA09,TMK2016}.  Specifically, for some fixed SPD matrix
$B\in\R^{n_x\times n_x}$, \eqref{eq:EKI_level_cont} is replaced by the
stabilized dynamics
\begin{equation}\label{eq:VI_EKI_level_cont}
  v_{t_{j+1} }^{(m),l_j} = \int_{t_j}^{t_{j+1} } (C(v_t^{l_j})+B)H_{l_j}^\top\Sigma^{-1}(z-H_{l_j}v_t^{(m),l_j})\,{\mathrm d}t + \int_{t_j}^{{t_{j+1}} }C(v_t^{l_j})H_{l_j}^\top\Sigma^{-1/2}\,{\mathrm d}W_t^{(m)}.
\end{equation}

Assume that an evaluation of $F_l$ has cost $O(l)$. Then,
approximating \eqref{eq:VI_EKI_level_cont} using for example an
Euler-Maruyama scheme with a fixed number of time steps $T$, requires
$O(M\cdot T)$ evaluations of $F_l$. With $M$ and $T$ interpreted as
constants, this amounts to cost $O(l)$. In this sense the cost
quantity $\sum_{j=0}^{K-1}l_j$ introduced in \eqref{eq:cost}, can be
interpreted as the computational cost of computing $x_K$ in
\eqref{eq:EKIx}. We give convergence result for EKI and TEKI in the
case of noise-free and noisy data respectively.

Let $x_j$ in \eqref{eq:EKIx} be the mean of the particle system driven
by \eqref{eq:VI_EKI_level_cont}, with $H$ (and $H_l$) as in
\eqref{eq:EKIdata}.

\begin{proposition}[Multilevel EKI]\label{prop:error_EKI}
  Let Assumption~\ref{ass:forwardmodel} be satisfied,
  $y=Fx^\dagger\in\R^{n_y}$ for some truth
  $x^\dagger\in\mathbb R^{n_x}$ and let %
  $Q_0$ have finite second moment on $R^{n_x}$. For $\tau >0$
  sufficiently small, there exists $c\in(0,1)$ depending on $F$ and
  $B$ such that for levels $l_{K,j}(\eps)$, $j=0,\dots,K(\eps)-1$,
  given by \eqref{eq:ML},
  \begin{equation*}
    e_{K(\eps)}:={\mathbb E[\Phi(x_{K(\eps)})]}-\Phi(x^\dagger)=
    \mathbb E[%
    \|\Gamma^{-1/2}(F x_{K(\eps)}-y)\|_{\R^{n_y}}^2]
    \le \varepsilon,
  \end{equation*}
  for all small enough $\eps>0$.  Furthermore,
  ${\rm cost}_{\rm ML}(\eps)=\sum_{j=0}^{K(\eps)-1}l_{K,j}(\eps)\simeq
  \eps^{-\frac{1}{\alpha}}$.
\end{proposition}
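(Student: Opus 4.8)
The plan is to verify Assumption~\ref{ass:disc} for the surrogate error quantity
$\mathcal E_j:=\frac1M\sum_{m=1}^M\mathbb E[\|\Gamma^{-1/2}(Fv_{t_j}^{(m),l_j}-y)\|_{\R^{n_y}}^2]$ and then to invoke Theorem~\ref{thm:ML} together with Remark~\ref{rmk:constant}. Since $x_j=\frac1M\sum_m v_{t_j}^{(m),l_j}$, Jensen's inequality gives $e_{K(\eps)}=\mathbb E[\|\Gamma^{-1/2}(Fx_{K(\eps)}-y)\|^2]\le\mathcal E_{K(\eps)}$, so it suffices to bound $\mathcal E_{K(\eps)}$. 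The cost model~\eqref{eq:cost} is immediate from the discussion preceding the statement: discretizing \eqref{eq:VI_EKI_level_cont} on $[t_j,t_{j+1}]$ by Euler--Maruyama with $M$ particles and $T$ steps costs $O(MT)=O(l_j)$ evaluations of $F_{l_j}$, hence $\mathrm{cost}(x_K)\simeq\sum_{j=0}^{K-1}l_j$. It therefore remains to prove an error decay $\mathcal E_{j+1}\le c\,\mathcal E_j+C\,l_j^{-\alpha}$ for some $c\in(0,1)$ depending only on $F,B,\Gamma$ and some $C\ge1$; Theorem~\ref{thm:ML} (after the harmless level rescaling of Remark~\ref{rmk:constant} absorbing $C$) then yields $\mathcal E_{K(\eps)}\le\tilde e_{K(\eps)}\le\eps$ for the level choice \eqref{eq:ML} and $\mathrm{cost}_{\mathrm{ML}}(\eps)\simeq\eps^{-1/\alpha}$.

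First I would work on a single interval $[t_j,t_{j+1}]$ with the residuals $r_t^{(m)}:=\Gamma^{-1/2}(Fv_t^{(m),l_j}-y)$ formed with the \emph{exact} $F$, and apply It\^o's formula to $\frac1M\sum_m\|r_t^{(m)}\|^2$. Using $y=Fx^\dagger$ and writing $F_{l_j}=F+(F_{l_j}-F)$, a direct computation shows that for exact $F$ the process $\{r_t^{(m)}\}_m$ is \emph{closed}: $\mathrm dr_t^{(m)}=-(\mathrm{Cov}(r_t)+\widetilde B)r_t^{(m)}\,\mathrm dt+\mathrm{Cov}(r_t)\,\mathrm dW_t^{(m)}$, where $\mathrm{Cov}(r_t)=\frac1M\sum_k(r_t^{(k)}-\bar r_t)(r_t^{(k)}-\bar r_t)^\top\succeq0$ is the empirical residual covariance and $\widetilde B:=\Gamma^{-1/2}FBF^\top\Gamma^{-1/2}$. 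The key structural point---this is exactly where covariance inflation is used---is that $\mathrm{rank}(FF^\top)=n_y$ makes $\widetilde B$ SPD, so $\mathrm{Cov}(r_t)+\widetilde B\succeq\rho_0 I$ with $\rho_0:=\lambda_{\min}(\widetilde B)>0$ \emph{independent of the ensemble configuration}. Taking expectations and averaging over $m$, the It\^o correction from the diffusion coefficient contributes $+\mathbb E\|\mathrm{Cov}(r_t)\|_F^2$, which is dominated by the ensemble-collapse contribution $-2\mathbb E\|\mathrm{Cov}(r_t)\|_F^2$ coming from the drift; hence for exact $F$ one obtains the clean, noise-floor-free inequality $\frac{\mathrm d}{\mathrm dt}\mathbb E[\frac1M\sum_m\|r_t^{(m)}\|^2]\le-2\rho_0\,\mathbb E[\frac1M\sum_m\|r_t^{(m)}\|^2]$.

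It then remains to control the $F-F_{l_j}$ perturbation, whose terms in the drift of $\|r_t^{(m)}\|^2$ are bounded by $\|r_t^{(m)}\|\,\|F-F_{l_j}\|$ times bounded polynomial factors in $\|C(v_t^{l_j})\|$ and $\|v_t^{(m),l_j}\|$; by Assumption~\ref{ass:forwardmodel}, $\|F-F_{l_j}\|^2\le b\,l_j^{-\alpha}$. For this one needs \emph{a priori} second-moment bounds $\sup_{j,t}\mathbb E[\|v_t^{(m),l_j}\|^2+\|C(v_t^{l_j})\|]\le\bar V<\infty$, uniform over the levels because $\|F_{l_j}\|\le\|F\|+\sqrt b$, obtained from a Gr\"onwall/Lyapunov estimate on \eqref{eq:VI_EKI_level_cont} using that $Q_0$ has finite second moment (``$\tau$ sufficiently small'' enters here, guaranteeing well-posedness of the level-switching superlinear SDE and uniform moment control). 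Young's inequality then splits each perturbation term into a piece $\le\rho_0\|r_t^{(m)}\|^2$, absorbed into the $-2\rho_0$-term once $\eps$---hence every $l_j$---is small enough that $b\,l_j^{-\alpha}$ is small relative to $\rho_0^2/\bar V$ and the other constants, and a piece $\lesssim l_j^{-\alpha}$. Altogether $\frac{\mathrm d}{\mathrm dt}\mathbb E[\frac1M\sum_m\|r_t^{(m)}\|^2]\le-\rho_0\,\mathbb E[\frac1M\sum_m\|r_t^{(m)}\|^2]+C\,l_j^{-\alpha}$ on $[t_j,t_{j+1}]$, and Gr\"onwall over one interval of length $\tau$ yields $\mathcal E_{j+1}\le e^{-\rho_0\tau}\mathcal E_j+\frac{C}{\rho_0}l_j^{-\alpha}$, i.e.\ the error decay \eqref{eq:disc} with $c=e^{-\rho_0\tau}\in(0,1)$. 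I expect the main obstacle to be precisely this last step: establishing the uniform-in-level a priori moment bounds for the nonlinear interacting particle SDE and confirming that the forward-model perturbation is genuinely of additive order $l_j^{-\alpha}$ with no residual noise floor---the closed residual dynamics rules out a noise floor in the idealized case, but transferring this to the approximate operators $F_{l_j}$ is where the parameter-space estimates, and the smallness of $\tau$, are genuinely needed.
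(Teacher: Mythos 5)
Your proposal is correct in its overall strategy and lands on the same structural skeleton as the paper: covariance inflation makes $\tilde B=\Gamma^{-1/2}FBF^\top\Gamma^{-1/2}$ SPD, the mapped residual dynamics contract at a rate governed by $\lambda_{\min}(\tilde B)$, the forward-model approximation contributes an additive $O(l_j^{-\alpha})$ term via Assumption~\ref{ass:forwardmodel} and uniform second-moment bounds, and the conclusion follows from Theorem~\ref{thm:ML} together with Remark~\ref{rmk:constant}. Where you genuinely diverge is in how the $F$ versus $F_{l_j}$ discrepancy enters. You keep the residual defined with the exact $F$ throughout each interval and treat $F_{l_j}-F$ as a perturbation \emph{inside} the It\^o/Lyapunov computation, absorbing it by Young's inequality into the $-\rho_0$-term. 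The paper instead defines the residual with $F_{l_j}$ on $[t_j,t_{j+1}]$, so that the level-$l_j$ dynamics are \emph{exactly} the inflated EKI for the forward map $F_{l_j}$ and the single-level contraction result (Theorem~5.2 of the cited EKI analysis) applies verbatim with no perturbation analysis; the approximation error then enters only at the interval endpoints, by comparing $\frac12\|F_{l_j}\bar v-y\|_\Gamma^2$ with $\frac12\|F\bar v-y\|_\Gamma^2$ via the (reverse) triangle inequality and the uniform bound on $\mathbb E[\|\bar v\|^2]$. The paper's route buys two things your route does not: it needs only second moments of the particles (your Young-split produces squared perturbation terms such as $\|F-F_{l_j}\|^2\|C(v)\|^2$, which require fourth-moment control that your stated a priori bound $\sup_{j,t}\mathbb E[\|v_t^{(m),l_j}\|^2+\|C(v_t^{l_j})\|]<\infty$ does not cover), and it avoids the smallness condition coupling $b\,l_j^{-\alpha}$ to $\lambda_{\min}(\tilde B)$ that your absorption step requires. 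Conversely your route makes the role of the inflation and the exact cancellation $-2\|C(\mathfrak r)\|_F^2+\|C(\mathfrak r)\|_F^2\le 0$ explicit rather than outsourcing it to a citation, and your Jensen step from the particle-averaged residual to the residual of the ensemble mean is a harmless cosmetic difference (the paper tracks the mean's residual directly). Neither difference is a gap in the sense of a failing step, but to make your version airtight you would need to strengthen the a priori moment bounds to the fourth order.
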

\begin{proof}
  For the proof see Appendix~\ref{app:proof_Prop_EKI}.
\end{proof}

Now let $x_j$ in \eqref{eq:EKIx} be the mean of the particle system
driven by \eqref{eq:VI_EKI_level_cont}, with $H$ (and $H_l$) as in
\eqref{eq:TEKIdata}.

\begin{proposition}[Multilevel TEKI]\label{prop:error_TEKI}
  Let Assumption~\ref{ass:forwardmodel} be satisfied,
  $x_\ast\in\R^{n_x}$ be the unique minimizer of $\Phi$ in
  \eqref{eq:EKIobjective} with $H$ given by \eqref{eq:TEKIdata}, and %
  let $Q_0$ have finite second moment on $R^{n_x}$.  For $\tau >0$
  sufficiently small there exists $c\in(0,1)$ depending on $F$ and $B$
  such that for levels $l_{K,j}(\eps)$, $j=0,\dots,K(\eps)-1$, given
  by \eqref{eq:ML},
  \begin{align*}
    e_{K(\eps)}:=\mathbb E\left[\frac12\|\Gamma^{-1/2}F(x_{K(\eps)}-x_\ast)\|_{\R^{n_y}}^2 + \frac{\lambda}{2}\|%
    C_0^{-1/2}(x_{K(\eps)}
    -x_\ast)\|_{\R^{n_x}}^2\right]\le\eps,
  \end{align*}
  for all small enough $\eps>0$. Furthermore, %
  ${\rm cost}_{\rm ML}(\eps)=\sum_{j=0}^{K(\eps)-1}l_{K,j}(\eps)\simeq
  \eps^{-\frac{1}{\alpha}}$.
\end{proposition}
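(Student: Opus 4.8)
\section*{Proof proposal (not part of the paper)}

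The plan is to verify that the quantity $e_j:=\mathbb{E}[\Phi(x_j)]-\Phi(x_\ast)$ fits the abstract Assumption~\ref{ass:disc} with the same exponent $\alpha$, and then to invoke Theorem~\ref{thm:ML}. With the TEKI data \eqref{eq:TEKIdata}, $\Phi(x)=\tfrac12\|\Sigma^{-1/2}(Hx-z)\|^2$ is quadratic with Hessian $A:=H^\top\Sigma^{-1}H=F^\top\Gamma^{-1}F+\lambda C_0^{-1}\succeq\lambda C_0^{-1}\succ0$; hence $\Phi$ is strongly convex, $x_\ast$ is unique, and $e_j=\mathbb{E}[\tfrac12\|\Gamma^{-1/2}F(x_j-x_\ast)\|^2+\tfrac{\lambda}{2}\|C_0^{-1/2}(x_j-x_\ast)\|^2]$, which is equivalent to $\mathbb{E}\|x_j-x_\ast\|^2$ up to constants depending only on $A$. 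The cost model \eqref{eq:cost} is immediate: discretizing \eqref{eq:VI_EKI_level_cont} on each $[t_j,t_{j+1}]$ by a fixed number $T$ of Euler-Maruyama steps for the $M$ particles requires $O(MT)$ evaluations of $F_{l_j}$, each of cost $O(l_j)$, so ${\rm cost}(x_K)\simeq\sum_{j=0}^{K-1}l_j$. It therefore remains to establish an error-decay recursion of the form $e_{j+1}\le c\,e_j+C\,l_j^{-\alpha}$ for some $c=c(F,B,\Gamma,C_0,\tau)\in(0,1)$ and a constant $C\ge1$, uniformly in $j$, whenever $\eps$ is small enough that every level $l_j=l_{K,j}(\eps)$ is large; this is where ``for all small enough $\eps$'' enters, since $\min_j l_{K,j}(\eps)\to\infty$ as $\eps\to0$, and the constant $C$ is then removed by Remark~\ref{rmk:constant}.

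For the one-step estimate fix $j$ and set $l:=l_j$. On $[t_j,t_{j+1}]$ the scheme \eqref{eq:VI_EKI_level_cont} is inflated continuous-time linear EKI for the surrogate objective $\Phi_l(x)=\tfrac12\|\Sigma^{-1/2}(H_lx-z)\|^2$, whose Hessian $A_l=F_l^\top\Gamma^{-1}F_l+\lambda C_0^{-1}$ satisfies $\lambda C_0^{-1}\preceq A_l\preceq2A$ for $l$ large --- the identity block in \eqref{eq:TEKIdata} keeps $H_l$ of full column rank, so $\Phi_l$ is uniformly strongly convex --- and let $x_l^\ast$ be its unique minimizer. Following the standard analysis of inflated linear (T)EKI (cf.\ \cite{BSW2018,BSWW2021,BSWW19}, and \cite{CST2019,WCST2021} for the Tikhonov case): the ensemble mean obeys $\mathrm{d}(\bar v_t-x_l^\ast)=-(C(v_t)+B)A_l(\bar v_t-x_l^\ast)\,\mathrm{d}t+\tfrac1M C(v_t)H_l^\top\Sigma^{-1/2}\sum_m\mathrm{d}W_t^{(m)}$, so that $\Phi_l(\bar v_t)-\Phi_l(x_l^\ast)=\tfrac12\|\bar v_t-x_l^\ast\|_{A_l}^2$ has deterministic drift $\le-2\kappa\,[\Phi_l(\bar v_t)-\Phi_l(x_l^\ast)]$ with $\kappa\ge\lambda\lambda_{\min}(B)\lambda_{\min}(C_0^{-1})>0$ uniform in $l$ (only $C(v_t)\succeq0$ and $B\succ0$ are used), while the It\^o correction is controlled by the ensemble spread $S_t:=\tfrac1M\sum_m\|v_t^{(m)}-\bar v_t\|^2$, which itself satisfies a contractive inequality $\dot S_t\lesssim-\kappa'S_t$ modulo higher-order terms, for $\tau$ small enough that $S_t$ stays in the basin. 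Working with a coupled Lyapunov functional $\mathcal{E}_t:=\mathbb{E}[\Phi_l(\bar v_t)-\Phi_l(x_l^\ast)+\beta S_t]$ with a suitable constant $\beta$, Gr\"onwall yields a one-step contraction $\mathbb{E}[\Phi_l(x_{j+1})-\Phi_l(x_l^\ast)+\beta S_{t_{j+1}}]\le\rho\,\mathbb{E}[\Phi_l(x_j)-\Phi_l(x_l^\ast)+\beta S_{t_j}]$ with $\rho=\rho(\kappa,\tau)\in(0,1)$ independent of $l$.

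It remains to pass from the surrogate back to $\Phi$. By Assumption~\ref{ass:forwardmodel}, $\|F-F_l\|^2\le b\,l^{-\alpha}$, so a routine perturbation estimate for the normal equations gives $\|x_l^\ast-x_\ast\|^2\lesssim b\,l^{-\alpha}$, and since $\Phi,\Phi_l$ are quadratics with $O(\|F-F_l\|)$-close Hessians and minimizers, one obtains $\Phi(x)-\Phi(x_\ast)\le(1+o(1))[\Phi_l(x)-\Phi_l(x_l^\ast)]+C_1 l^{-\alpha}$ and conversely, with $1+o(1)\to1$ as $l\to\infty$ (and $\tau$ fixed). Inserting these on both sides of the one-step contraction turns it into an error-decay recursion $\mathcal{F}_{j+1}\le c\,\mathcal{F}_j+C\,l_j^{-\alpha}$ for the augmented quantity $\mathcal{F}_j:=e_j+\beta'\,\mathbb{E}[S_{t_j}]\ge e_j$ and some $c<1$: first fix $\tau$ small so that $\rho(\kappa,\tau)<1$, then take all levels large (i.e.\ $\eps$ small) so that $(1+o(1))\rho<1$. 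Since $Q_0$ has finite second moment, $\mathcal{F}_0=e_0+\beta'\,\mathbb{E}[S_0]<\infty$, so Assumption~\ref{ass:disc} applies to $\mathcal{F}_j$, and Theorem~\ref{thm:ML} (with Remark~\ref{rmk:constant} disposing of $C$) gives $e_{K(\eps)}\le\mathcal{F}_{K(\eps)}\le\tilde e_{K(\eps)}((l_{K,j}(\eps))_j)\le\eps$ and ${\rm cost}_{\rm ML}(\eps)\simeq\eps^{-1/\alpha}$ for all small $\eps$. The argument parallels Proposition~\ref{prop:error_EKI}, with the Tikhonov case in fact cleaner, as the identity block renders $\Phi$ strongly convex on all of $\R^{n_x}$. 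The main obstacle is the middle step: establishing the one-step contraction with rate \emph{uniform in the level $l$}, which forces a careful re-examination of the geometric convergence of inflated linear EKI, tracking the coupling between ensemble mean and spread (the noise driving the mean is proportional to the sample covariance) and checking that every constant depends only on $F$, $B$, $\Gamma$, $C_0$ and $\tau$.
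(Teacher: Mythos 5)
Your proposal follows essentially the same route as the paper's proof: a level-uniform one-step contraction for the level-$l_j$ surrogate problem (with the rate supplied by the inflation $B$, since only $C(v_t)\succeq 0$ is used), a bias-conversion step via Assumption~\ref{ass:forwardmodel} that contributes the $O(l_j^{-\alpha})$ additive term, and finally an appeal to Theorem~\ref{thm:ML} together with Remark~\ref{rmk:constant}. The only substantive difference is where the contraction is established: the paper passes to the residual $\bar{\mathfrak r}_t=\Sigma^{-1/2}H(\bar v_t-x_\ast)$ in observation space and outsources the one-step decay (including the It\^o correction) to Theorem~5.2 of the cited EKI analysis, whereas you reconstruct that step in state space with an $A_l$-weighted Lyapunov functional coupled to the ensemble spread --- a sketch of the same mechanism rather than a different argument.
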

\begin{proof}
  For the proof see Appendix~\ref{app:proof_Prop_TEKI}.
\end{proof}

In both cases there holds an analogous statement for the single-level
choice in \eqref{eq:SL} with (the worse) asymptotic cost behaviour
$\eps^{-1/\alpha}\log(\eps^{-1})$ as $\eps\to 0$.

\section{Optimization over probability measures: Multilevel Bayesian
  inference}\label{sec:MLsampling} %

As our second application, we consider an interacting particle system,
to solve the problem presented in Sec.~\ref{sec:baysinf}. To derive
the method, we apply the multilevel optimization of
Sec.~\ref{sec:ML_opti} to a gradient flow in the space of probability
measures.  Appendix \ref{app:algo_ML_Langevin} gives details on the
algorithm.

Throughout this section we %
adopt the assumptions of Sec.~\ref{sec:baysinf}, i.e.\ $\cX=\R^{n_x}$,
the forward model $F:\R^{n_x}\to\R^{n_y}$ may be nonlinear and the
prior $Q_0\sim \cN(0,\frac1\lambda C_0)$ is Gaussian.  We denote it's
density by $q_0$. The posterior density then equals
\begin{equation}\label{eq:posterior}
  \rho_\ast(x) = \frac{1}{Z}\exp(\cV_R(x)),\qquad
  \cV_R(x):=\frac12\|\Gamma^{-1/2}(F(x)-y)\|_{\R^{n_y}}^2+\frac{\lambda}2\|C_0^{-1/2}x\|_{\R^{n_x}}^2
\end{equation}
with $Z=\int_{\R^{n_x}}\exp(\cV_R(x))\,{\mathrm d}x$.  To explain the
idea of approximating $\rho_\ast$ by an ensemble of particles, we
first recall the Langevin dynamics.  Let $v_t\in\R^{n_x}$ initialized
as $v_0\sim q_0$ solve
\begin{equation}\label{eq:Langevin_SDE}
  {\mathrm d}v_t = -\nabla_x \cV_R(v_t)\,{\mathrm d}t + \sqrt{2}{\mathrm d}W_t,\quad v_0\sim q_0.
\end{equation}
The evolution of its distribution $\rho_t$ is described by the
Fokker-Planck equation, see e.g.~\cite{P14},
\begin{equation}\label{eq:FokkerPlanck}
  \partial_t \rho_t = \nabla\cdot (\rho_t\nabla \cV_R)+\Delta\rho_t,\quad \rho_0 = q_0.
\end{equation}
Under certain assumptions on $\cV_R$ the Markov process $v_t$ is
ergodic and $\rho_\ast$ is its unique invariant distribution.
Furthermore, it is possible to describe the rate of convergence in
terms of the gradient flow structure given by \eqref{eq:FokkerPlanck},
see for example \cite[Theorem~4.9]{P14}.

 \subsection{Interacting Langevin sampler}
 We consider an interacting Langevin particle %
 system introduced in \cite{GHWS2020}.  Let $(v_t^{(m)})_{m=1}^M$ %
 solve %
 \begin{equation}\label{eq:interactingLang}
   {\mathrm d} v_t^{(m)} = -C(v_t)\nabla\cV_R(v_t^{(m)})\,{\mathrm d}t + \sqrt{2C(v_t)}\,{\mathrm d}W_t^{(m)},
 \end{equation}
 with $v_0^{(m)}\sim q_0$ i.i.d., where $(W_t^{(m)})_{m=1}^M$ denote
 independent Brownian motions on $\R^{n_x}$ and with $C(v)$ the
 empirical covariance, see Appendix \ref{app:notation}. %
 Observe that the mean field limit satisfies the following
 preconditioned version of
 \eqref{eq:Langevin_SDE}-\eqref{eq:FokkerPlanck} (e.g.\
 \cite{DL2021_b}):
 \begin{align*}{\mathrm d} v_t = -C(\rho)\nabla\cV_R(v_t)\,{\mathrm
     d}t + \sqrt{2C(\rho)}\,{\mathrm d}W_t,\end{align*} where
 \begin{equation}\label{eq:mC}
   m(\rho)=\int_{\R^{n_x}} v\rho(v)\,{\mathrm d}v,\qquad C(\rho) = \int_{\R^{n_x}}\left((v-m(\rho))\otimes (v-m(\rho))\right)\rho(v)\,{\mathrm d}v
 \end{equation}
 and %
 \begin{equation}\label{eq:prec_FokkerPlanck}
   \partial_t \rho_t = \nabla\cdot (\rho_tC(\rho_t)\nabla \cV_R)+\Tr(C(\rho_t){\mathrm D}^2\rho_t),\quad \rho_0 = q_0.
 \end{equation}

\subsection{Mean-field and discrete multilevel interacting Langevin sampler}
Fix $\tau >0$ and set $t_j:=j\cdot\tau$.  In the following let
$\cV_R^l$ be given by \eqref{eq:posterior} with the forward model
$F:\R^{n_x}\to\R^{n_y}$ replaced by an approximation $F_l$.  We
consider the discrete time process $(\rho_j)_{j=1,\dots,K}$
iteratively defined by $\rho_{j+1} = \rho_{{t_{j+1}} }^{l_j},$
where $\rho_0^{l_1}=q_0$ and $\rho_{t}^{l_j}$ solves
\eqref{eq:prec_FokkerPlanck} on a time interval of length $\tau$
initialized with the previous distribution, i.e.\
\begin{equation}\label{eq:FokkerPlanck_l}
  \partial_t \rho_t^{l_j} = \nabla\cdot (\rho_t^{l_j} C(\rho_t^{l_j})\nabla \cV_R^{l_j})+\Tr(C(\rho_t^{l_j}){\mathrm D}^2\rho_t^{l_j}),\quad \rho_{t_j}^{l_j} = \rho_{t_{j}}^{l_{j-1}}.
\end{equation}

While the subsequently discussed analysis will be based on the mean
field limit \eqref{eq:FokkerPlanck_l}, we present a discretized
version by introducing a particle based approximation based on
\eqref{eq:interactingLang}.  To this end let $(\hat \rho_j)_j$
iteratively be defined by %
\begin{align*} \hat \rho_{j+1} = \frac{1}{M}\sum_{m=1}^M
  \delta_{v_{t_{j+1}}^{(m),l_j}}, %
\end{align*}
where $\delta_v$ denotes the dirac-measure at $v$ and
$v_{{t_{j+1}} }^{(m),l_j}$ initialized by
$v_{t_j}^{(m),l_j} = v_{t_{j} }^{(m),l_{j-1}}$ solves
\begin{align}\label{eq:interacting_Langevin_integral}
  v_{{t_{j+1}}}^{(m),l_j} = v_{t_j}^{(m),l_{j}} -\int_{t_j}^{{t_{j+1}}} C(v_t^{l_j})\nabla \cV_R(v_t^{(m),l_{j}})\,{\mathrm d}t + \int_{t_j}^{t_{j+1}} \sqrt{2C(v_t^{l_j})}\,{\mathrm d}W_t^{(m)},\quad {m=1,\dots,M}.
\end{align}

\subsection{Mean-field error analysis for the Kullback--Leibler divergence}
Under convexity assumptions on $\cV_R^{l}$ and assuming that
$C(\rho_t^l)$ does not degenerate, one can prove exponential
convergence to equilibrium for the mean-field limit of the interacting
Langevin dynamics for any fixed level $l$
\cite[Proposition~2]{GHWS2020}. These assumptions can for example be
verified for linear forward models and Gaussian priors
\cite[Corollary~5]{GHWS2020}. We make the following additional
assumption:
\begin{assumption}[Approximation of the forward
  operator]\label{ass:KL}
  There exist $b>0$ and $\alpha>0$ such that for each $j\in\N$ there
  exists $F_{l_j}:\R^{n_x}\to \R^{n_y}$ satisfying
  \begin{align*} \int_{\mathbb R^{n_x}} \|F(x)-F_{l_j}(x)\|^2
    \rho_j(x)\,{\mathrm d}x \le bl_j^{-\alpha}.\end{align*} %
\end{assumption}
For linear forward models and Gaussian priors, the pdf $\rho_j$
remains Gaussian and Assumption~\ref{ass:KL} can be inferred from
Assumption~\ref{ass:forwardmodel}. Extending the convergence result
of~\cite{GHWS2020} to error decay \eqref{eq:disc} allows us to apply
our results from Sec.~\ref{sec:ML_opti}. This leads to the following
Proposition:

\begin{proposition}[Multilevel interacting Langevin sampler]\label{prop:error_EKS}
  Suppose $\KL{q_0}{\rho_*}<\infty$ and let Assumption~\ref{ass:KL} be
  satisfied.  Suppose there exist $\sigma_1$, $\sigma_2>0$ such that
  $\nabla^2 \cV_R^l>\sigma_1{\mathrm{Id}}$ and
  $C(\rho_j)>\sigma_2{\mathrm{Id}}$ (cp.~\eqref{eq:mC}) for all
  $j\ge 0$. For $\tau >0$ sufficiently small there exists $c\in(0,1)$
  depending on $\sigma_1,\sigma_2$, such that $l_{K,j}(\eps)$,
  $j=0,\dots,K(\eps)-1$ given by \eqref{eq:ML} yields
  \begin{equation*}
    e_{K(\eps)}:=
    \KL{\rho_{K(\eps)}}{\rho_\ast}
    \le \varepsilon,
  \end{equation*}
  for all small enough $\eps>0$ where $\rho_j$ solves
  \eqref{eq:FokkerPlanck_l}.  Moreover,
  ${\rm cost}_{\rm
    ML}(\eps)\simeq\sum_{j=0}^{K(\eps)-1}l_{K,j}(\eps)\simeq
  \eps^{-\frac{1}{\alpha}}$.
\end{proposition}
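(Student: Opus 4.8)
The plan is to reduce Proposition~\ref{prop:error_EKS} to an application of Theorem~\ref{thm:ML} by verifying that the error quantity $e_j:=\KL{\rho_j}{\rho_\ast}$, where $\rho_j$ solves \eqref{eq:FokkerPlanck_l}, satisfies the abstract error decay \eqref{eq:disc} of Assumption~\ref{ass:disc}. The cost model \eqref{eq:cost} is immediate from the discussion preceding the proposition (one step of the mean-field flow \eqref{eq:FokkerPlanck_l} on a time window of length $\tau$ with forward model $F_{l_j}$ costs $O(l_j)$), so the whole work lies in the error-decay estimate; once \eqref{eq:disc} holds with some $c\in(0,1)$ and the exponent $\alpha$ from Assumption~\ref{ass:KL}, Theorem~\ref{thm:ML} gives both $e_{K(\eps)}\le\eps$ for the level choice \eqref{eq:ML} and the cost asymptotics ${\rm cost}_{\rm ML}(\eps)\simeq\eps^{-1/\alpha}$ verbatim.

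To establish \eqref{eq:disc}, first I would recall the single-level convergence result of \cite[Proposition~2]{GHWS2020}: under $\nabla^2\cV_R^l>\sigma_1\mathrm{Id}$ and $C(\rho_t^l)>\sigma_2\mathrm{Id}$, the mean-field preconditioned Fokker--Planck flow \eqref{eq:prec_FokkerPlanck} at level $l$ contracts the KL divergence towards its own invariant measure $\rho_\ast^l$ (the posterior built from $\cV_R^l$) exponentially, i.e.\ $\KL{\rho_t^l}{\rho_\ast^l}\le e^{-2\sigma_1\sigma_2 t}\KL{\rho_s^l}{\rho_\ast^l}$ for $t\ge s$. Applying this over one window of length $\tau$ and setting $c_0:=e^{-2\sigma_1\sigma_2\tau}\in(0,1)$ yields $\KL{\rho_{j+1}}{\rho_\ast^{l_j}}\le c_0\,\KL{\rho_{t_j}^{l_{j-1}}}{\rho_\ast^{l_j}}$. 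The remaining task is a perturbation/triangle-type argument to replace the moving target $\rho_\ast^{l_j}$ by the true posterior $\rho_\ast$. Concretely I would bound $|\KL{\mu}{\rho_\ast^{l}}-\KL{\mu}{\rho_\ast}|$, and more importantly control how far $\rho_\ast^l$ is from $\rho_\ast$, in terms of $\sup_x\|F(x)-F_l(x)\|$ integrated against the relevant density, which by Assumption~\ref{ass:KL} is $O(l^{-\alpha/2})$ — but since $\cV_R$ is quadratic in $F$ this enters the log-density difference $\cV_R-\cV_R^l$ essentially linearly after using the bound on $\|F\|$ and $\|F_l\|$ on the support-relevant region, giving a contribution of order $l^{-\alpha}$ (the square is absorbed or, if a square root appears, one invokes Remark~\ref{rmk:constant} to absorb constants and, if necessary, redefine $\alpha$). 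Combining the contraction with this additive perturbation, and using $\sqrt{a+b}\le\sqrt a+\sqrt b$ or Young's inequality together with $c_0<1$ to re-absorb cross terms into a slightly larger $c\in(c_0,1)$, yields $e_{j+1}\le c\,e_j + C l_j^{-\alpha}$, which is \eqref{eq:disc} up to the harmless constant $C$ handled by Remark~\ref{rmk:constant}.

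The main obstacle I anticipate is exactly this perturbation step: KL divergence is not symmetric and not a metric, so "moving the reference measure" $\rho_\ast^{l_{j}}\to\rho_\ast^{l_{j+1}}\to\cdots\to\rho_\ast$ cannot be done by a naive triangle inequality. The clean route is to write, for any probability density $\mu$,
\begin{align*}
  \KL{\mu}{\rho_\ast} = \KL{\mu}{\rho_\ast^{l}} + \int \mu(x)\log\frac{\rho_\ast^{l}(x)}{\rho_\ast(x)}\,{\mathrm d}x
  = \KL{\mu}{\rho_\ast^{l}} + \int \mu(x)\big(\cV_R^{l}(x)-\cV_R(x)\big)\,{\mathrm d}x + \log\frac{Z}{Z^l},
\end{align*}
so that the discrepancy is governed by $\int\mu\,|\cV_R^l-\cV_R|$ plus $|\log(Z/Z^l)|$, both of which reduce to moments of $\|F-F_l\|$ and $\|F\|+\|F_l\|$ against $\rho_j$ or $\rho_\ast$; here one needs the (mild) uniform second-moment and boundedness bounds that follow from the Gaussian prior and the convexity assumptions (these are the same ingredients used in \cite{GHWS2020}). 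Care is needed that the density $\mu=\rho_{t_j}^{l_{j-1}}$ against which we integrate is not quite $\rho_j$ as in Assumption~\ref{ass:KL}, but since consecutive levels $l_{j-1},l_j$ are comparable under the choice \eqref{eq:ML} and the flow over one $\tau$-window has bounded moments, the bound $\int\rho_{t_j}^{l_{j-1}}\|F-F_{l_j}\|^2\lesssim l_j^{-\alpha}$ still holds (possibly enlarging $b$). A second, more technical point is choosing $\tau>0$ small enough that the discretization into windows of length $\tau$ does not spoil the contraction constant and that the covariance lower bound $C(\rho_j)>\sigma_2\mathrm{Id}$ propagates along the scheme; this is where the hypothesis "$\tau>0$ sufficiently small" and "$C(\rho_j)>\sigma_2\mathrm{Id}$ for all $j$" in the statement are used, and I would simply invoke the corresponding propagation argument from \cite{GHWS2020}. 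Once \eqref{eq:disc} is in hand, the conclusion is a direct citation of Theorem~\ref{thm:ML}.
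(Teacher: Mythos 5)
Your proposal follows essentially the same route as the paper's proof: cite \cite[Proposition~2]{GHWS2020} for the KL contraction towards the level-$l_j$ posterior $\rho_\ast^{l_j}$, bound the discrepancy $|\KL{\rho}{\rho_\ast^{l_j}}-\KL{\rho}{\rho_\ast}|$ by $\int\rho\,|\cV_R^{l_j}-\cV_R|$ and hence by $b\,l_j^{-\alpha}$ via Assumption~\ref{ass:KL}, combine these into the recursion $e_{j+1}\le c\,e_j+C\,l_j^{-\alpha}$, and conclude with Remark~\ref{rmk:constant} and Theorem~\ref{thm:ML}. Your bookkeeping of the normalizing constants $\log(Z/Z^l)$ and of which density Assumption~\ref{ass:KL} is integrated against is, if anything, slightly more explicit than the paper's, so the argument is correct and matches the intended proof.
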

\begin{proof}
  For the proof see Appendix~\ref{app:proof_Prop_Langevin}.
\end{proof}

An analogous results holds for the single-level choice in
\eqref{eq:SL} with cost behavior
$\log(\varepsilon^{-1})\varepsilon^{-\frac1\alpha}$.

\section{Numerical experiments}\label{sec:numerics}
We consider an adaptation of %
our running example in a one-dimensional setting with $D=(0,1)$.
Instead of working on $L^2(D)$ directly, we parametrize $L^2(D)$ via
\begin{align*}
  f(x) = f(x,\cdot) = \sum_{i\in\N} x_i\frac{\sqrt{2}}{\pi}\sin(i\pi \cdot)\in L^2(D),
\end{align*}
for $x\in\R^{n_x}$. %
We used the observation operator
$\cO:H^2(D)\to \R^{n_y}:u_f\mapsto (u_f(s_i))_{i=1}^{n_y}$ with the
equispaced points $s_i = \frac{i}{n_y+1}$, $i=1,\dots,n_y=15$. The
forward model is $F(x)=\cO(u_{f(x)})\in\R^{n_y}$ for $x\in\R^{n_x}$,
where $u_f$ denotes the solution of \eqref{eq:PDE}. We used piecewise
linear FEM on a uniform mesh to approximate $F$ by $F_l$.  This setup
corresponds to $\alpha=1$ in Sec.~\ref{sec:ML_opti}. As a prior on the
parameter space $\R^{n_x}$ we chose $Q_0=\cN(0,C_0)$ with
$C_0 = {\mathrm{diag}}(i^{-2},i=1,\dots,n_x)$.

We ran multilevel TEKI (Algorithm \ref{alg:MLEKI}) and the multilevel
interacting Langevin sampler (Algorithm \ref{alg:MLLangevin}) on this
problem and plotted the error convergence in
Fig.~\ref{fig:complexity_comparison}.  For TEKI, the plotted error
quantity is
$\mathbb
E[\frac12\|F_{\mathrm{ref}}(x_{K}(\varepsilon)-x_\ast)\|_\Gamma^2
+ \frac{\lambda}{2}\|x_K(\varepsilon)-x_\ast\|_{C_0}^2]$ with
$F_{\mathrm{ref}}=F_{2^{14}}$. For the interacting Langevin
sampler we consider the convergence of the posterior mean, and the
error quantity is
$\mathbb E[\frac12\|f(\cdot,
x_{K(\varepsilon)})-f(\cdot,x_\ast)\|_{L^2(D)}^2]$. The observed
convergence rates roughly coincide with the ones proven in
Sec.~\ref{sec:MLEKI} and Sec.~\ref{sec:MLsampling}. In particular, the
multilevel algorithms are superior to their single-level counterparts.
Even though the reduction in cost is only by the factor
$\log(\eps^{-1})$, as the plot shows this can amount to significant
gains in practice.  More details on all chosen parameters and the
setup can be found in Appendix \ref{app:numerics}.
\begin{figure}[!htb]
  \begin{center}
    \includegraphics[width=.42\textwidth]{./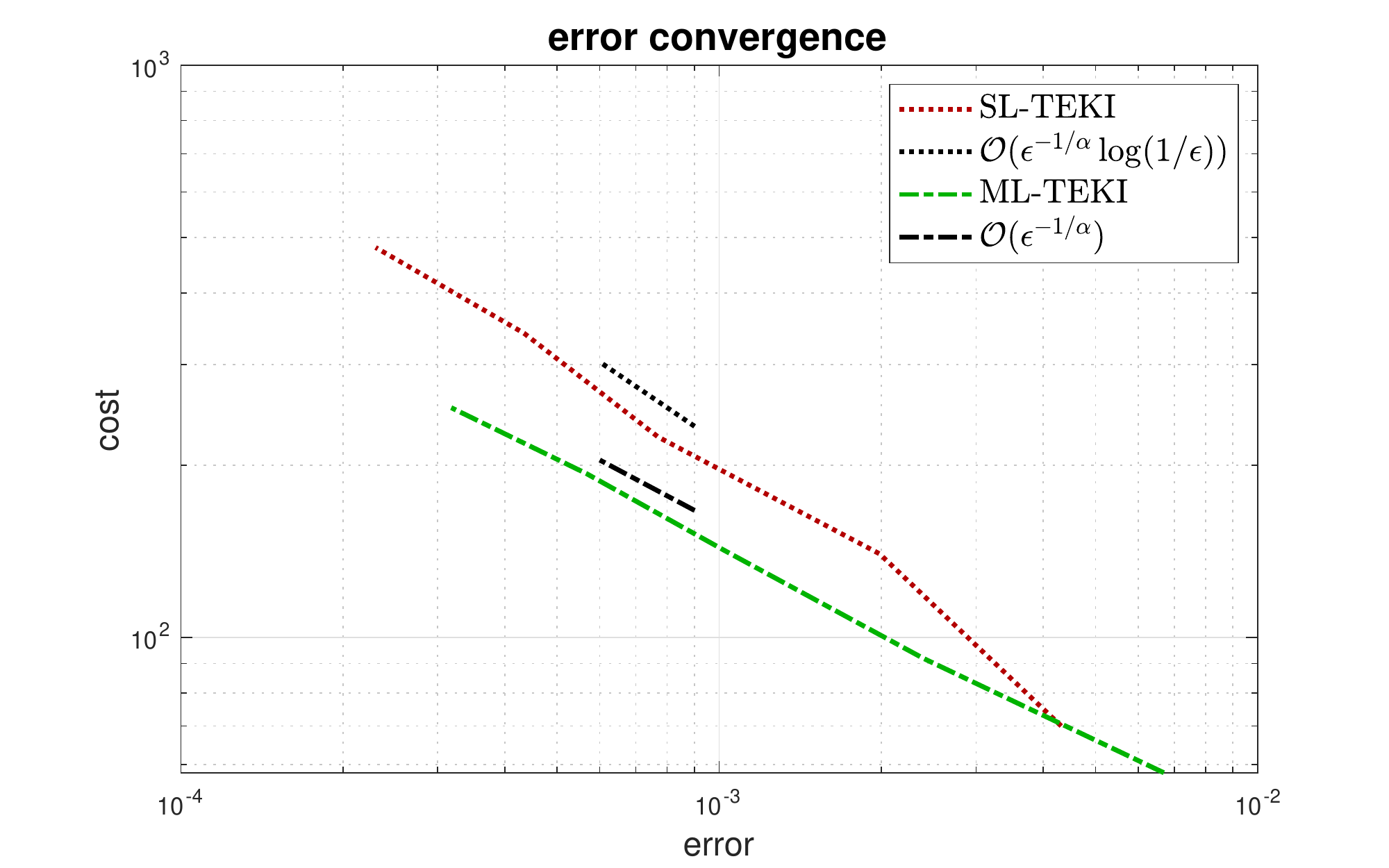}
    \includegraphics[width=.42\textwidth]{./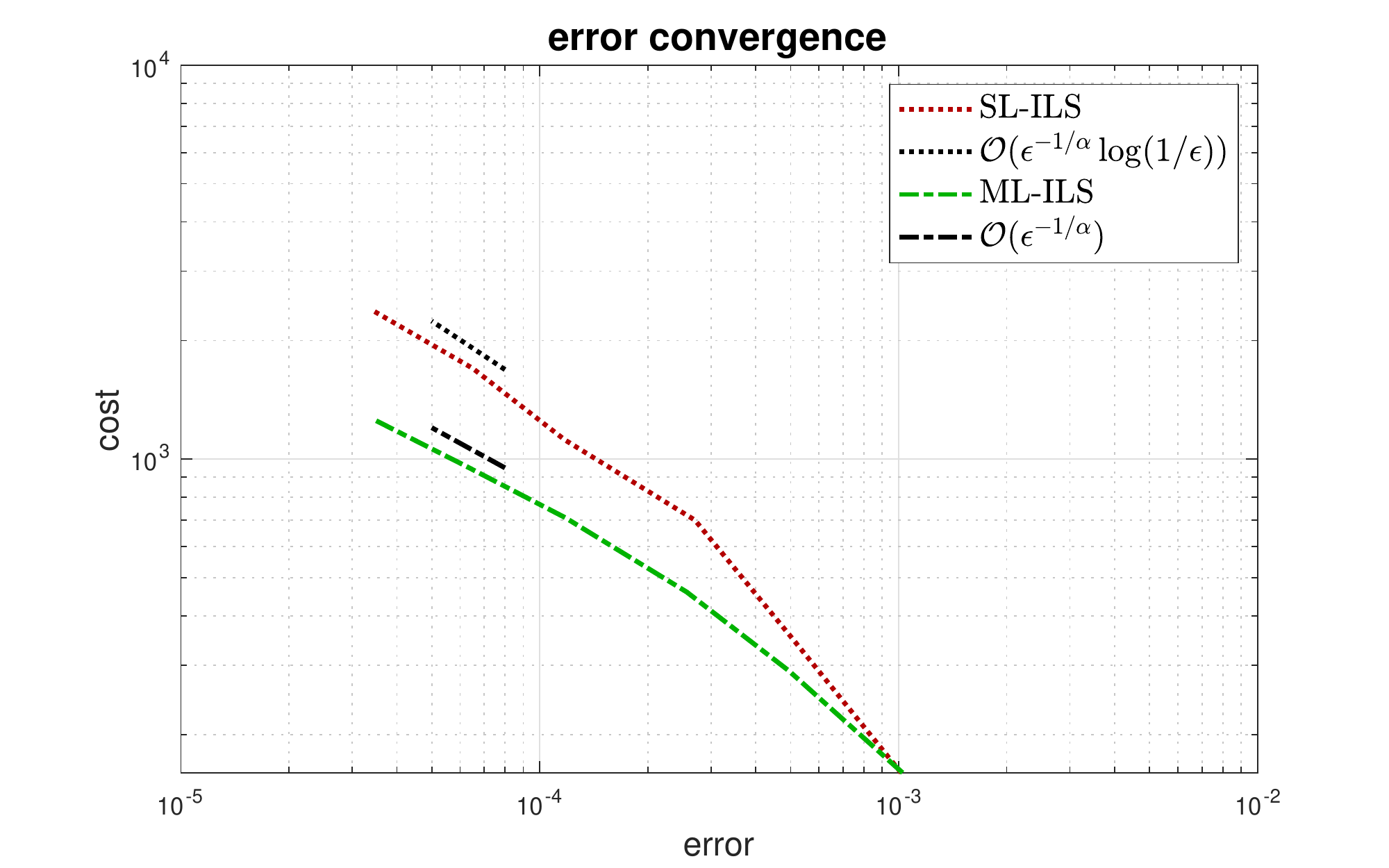}
    \caption{Computational costs vs.~estimated error for TEKI (left)
      and the interacting Langevin sampler (ILS) (right). We ran all
      Algorithms $100$ times to estimate the expected error.  }
    \label{fig:complexity_comparison}
  \end{center}
\end{figure}

\section{Conclusion}
We have presented an abstract multilevel optimization framework, and
provided quasioptimal level choices. We showed improved convergence
compared to single-level methods, and demonstrated the wide
applicability by introducing a novel multilevel ensemble Kalman
inversion, as well as a new multilevel Langevin sampler. While our
main focus was on inverse problems, we additionally discussed
different versions of multilevel gradient descent, which in principle
are applicable to any kind of optimization problem where the
evaluation of the (derivatives of the) objective function is expensive
or impossible, and demands approximation by numerical methods.
Further directions or research could include developing optimization
procedures for sampling with respect to other metrics (than KL), such
as the Wasserstein distance. The proof could proceed along similar
lines as Proposition~\ref{prop:error_EKS}.  Moreover, developing a
framework for adaptive step sizes and levels would be an interesting
extension.

\bibliographystyle{abbrv}
\bibliography{references.bib}

\newpage

\appendix

\section{Proofs of the main theorems }

\subsection{Proof of Theorem \ref{thm:SL}}\label{app:proof_Prop_SL}
\begin{proof} Let $\eps\in (0,e_0)$. With $\tilde e_K$ in
  \eqref{eq:ek}, denote
  \begin{equation}\label{eq:eK12}
    \tilde e_K((l_j)_j) =
    c^{K} e_0 + \sum_{j=0}^{K-1} c^{K-1-j} l_j^{-\alpha}
    =:\tilde e_{K,1} +\tilde e_{K,2}((l_j)_j).
  \end{equation}
  By definition, $K(\eps)\in\N$ in \eqref{eq:SL} is minimal such that
  $\tilde e_{K(\eps),1}\le\frac{\eps}{2}$. Moreover, by definition
  ${\bar l_K}(\eps)>0$ in \eqref{eq:SL} is minimal such that
  $\tilde e_{K,2}(({\bar l_K}(\eps))_j)\le\eps$.

  Now let $\hat K\in\N$ and $\hat l>0$ be any numbers such that
  $\tilde e_{\hat K}((\hat l)_j)\le\eps$.  Then
  \begin{equation*}
    \tilde e_{\hat K,1}\le
    \tilde e_{\hat K,1}+
    \tilde e_{\hat K,2}((\hat l)_j)
    =
    \tilde e_{\hat K}((\hat l)_j)\le\eps,
  \end{equation*}
  and by the optimality property of $K(\eps)$ we obtain
  $\hat K\ge K(2\eps)$. Similarly, since
  \begin{equation}
    \tilde e_{\hat K,2}((\hat l)_j)
    \le \eps,
  \end{equation}
  the optimality property of ${\bar l_K}(\eps)$ implies
  $\hat l\ge {\bar l_K}(2\eps)$.  Taking the infimum over all such
  $\hat l$ and $\hat K$
  \begin{equation}\label{eq:K2eps}
    K(2\eps){\bar l_K}(2\eps) \le
    \inf\{\hat K\hat l\,:\,\tilde e_K(\hat l)\le\eps\}
    \le K(\eps){\bar l_K}(\eps),
  \end{equation}
  where the second inequality holds due to
  $\tilde e_{K(\eps)}({\bar l_K}(\eps))\le \eps$.

  By definition of $K(\eps)$ and ${\bar l_K}(\eps)$ in \eqref{eq:SL}
  \begin{equation*}
    {\rm cost}_{\rm SL}(\eps) = K(\eps) l_{K}(\eps) = \log(c)^{-1}\log\left(\frac{\varepsilon}{2e_0}\right)\left(\sum_{i=0}^{K-1}c^{\frac{K-1-i}{1+\alpha}}\right)^{\frac1\alpha} \left(\frac{\varepsilon}{2}\right)^{-\frac1\alpha}.
  \end{equation*}
  Since with $\delta:=c^{\frac1{1+\alpha}}\in (0,1)$ holds
  $1\le \sum_{i=0}^{K-1}c^{\frac{K-1-i}{1+\alpha}} \le (1-c)^{-1}$ we
  have
  ${\rm cost}_{\rm SL}(\eps)\simeq
  \eps^{-\frac{1}{\alpha}}\log(\eps^{-1})$ as $\eps\to 0$. Together
  with \eqref{eq:K2eps} we find
  \begin{equation*}
    {\rm cost}_{\rm SL}(\eps) \simeq \eps^{-\frac{1}{\alpha}}\log(\eps^{-1})
    \simeq
    \inf\{\hat K\hat l\,:\,\tilde e_{\hat K}(\hat l)\le\eps\}\qquad\text{as}\quad\eps\to 0.
  \end{equation*}
  This shows \eqref{eq:costSL} and quasi-optimality in the sense of
  Def.~\ref{def:qosl}.
\end{proof}

\subsection{Proof of Lemma \ref{lemma:ML}}\label{app:proof_Lemma_ML}

\begin{proof}%
  Define $a_j:= c^{K-1-j}$ for all $j=0,\dots,K-1$.  We wish to
  minimize $\sum_{j=0}^{K-1}l_j$ for $l_j>0$ and under the constraint
  $\sum_{j=0}^{K-1} a_j l_j^{-\alpha}-\varepsilon$. To this end we use
  a Lagrange multiplier and consider
  \begin{equation*}
    \min_{l_1,\dots,l_K,\lambda}\ \sum_{j=0}^{K-1} l_j + \lambda \Bigg(\sum_{j=0}^{K-1} a_j l_j^{-\alpha}-\varepsilon\Bigg).
  \end{equation*}
  Taking the derivatives w.r.t.~$l_j$ and $\lambda$ leads to the
  following first order optimality conditions
  \begin{equation*}
    1-\lambda\alpha a_j l_j^{-(1+\alpha)} = 0,\qquad
    \sum_{j=0}^{K-1} a_j l_j^{-\alpha}-\varepsilon = 0.
  \end{equation*}
  The first condition gives
  $l_j = C_{K,\eps} a_j^{\frac{1}{1+\alpha}}$ for some constant
  $C_{K,\eps}$. Plugging $l_j$ into the second condition we find
  \begin{equation*}
    C_{K,\eps}^{-\alpha}\sum_{j=0}^{K-1} a_j a_j^{-\frac{\alpha}{1+\alpha}} = \varepsilon.
  \end{equation*}
  Hence
  $C_{K,\eps}=\varepsilon^{-\frac{1}{\alpha}}(\sum_{j=0}^{K-1}
  a_j^{\frac{1}{1+\alpha}})^{\frac{1}{\alpha}}$. Finally,
  \begin{equation*}
    \sum_{j=0}^{K-1}a_j^{\frac{1}{1+\alpha}}=\sum_{j=0}^{K-1}c^{\frac{j}{1+\alpha}}=
    \frac{1-c^{\frac{K}{1+\alpha}}}{1+c^{\frac{1}{1+\alpha}}}.
  \end{equation*}
  This shows \eqref{eq:optimal_level}.
\end{proof}

\begin{remark}\label{rmk:C}
  The constant $C_{K,\eps}$ in \eqref{eq:optimal_level} increases for
  decreasing tolerance $\varepsilon>0$. Moreover, $C_{K,\eps}$ is
  bounded from below and above uniformly for all $K\in\N$ as for
  $c\in(0,1)$ holds
  \begin{equation*}
    \sum_{j=0}^{K-1} c^{\frac{K-1-j}{1+\alpha}} %
    = \frac{1-(c^{\frac{1}{1+\alpha}})^{K}}{1-c^{\frac{1}{1+\alpha}}} \in \left(1,\frac{1}{1-c^{\frac1{1+\alpha}}}\right).
  \end{equation*}
\end{remark}

\subsection{Proof of Theorem \ref{thm:ML}}\label{app:proof_Prop_ML}
\begin{proof}%
  With $l_{K,j}(\eps)$ and $K(\eps)$ as in \eqref{eq:ML}, the
  calculation in \eqref{eq:costML} and Rmk.~\ref{rmk:C} show
  \begin{equation}\label{eq:propMLasymp}
    {\rm cost}_{\rm ML}(\eps)=\sum_{j=0}^{K(\eps)-1}l_{K,j}
    =\left(\frac{\eps}{2}\right)^{-\frac{1}{\alpha}}\left(\frac{1-c^{K(\eps)/(1+\alpha)}}{1-c^{1/(1+\alpha)}} \right)^{\frac{1+\alpha}{\alpha}}\simeq \eps^{-\frac{1}{\alpha}}\qquad\text{as}\quad\eps\to 0,
  \end{equation}
  as claimed.

  The proof of quasi-optimality in the sense of Def.~\ref{def:qoml}
  follows the same argument as in the proof of Theorem \ref{thm:SL}:
  Let $\eps\in (0,e_0)$. Split $\tilde e_K((l_j)_j)$ in \eqref{eq:ek},
  in the terms $\tilde e_{K,1}$ and $\tilde e_{K,2}((l_j)_j)$ as in
  \eqref{eq:eK12}.

  Now let $\hat K\in\N$ and $(\hat l_{j})_{j=0}^{\hat K-1}>0$ be
  arbitrary such that $\tilde e((\hat l_{j})_{j})\le\eps$.  Moreover,
  define
  \begin{equation*}
    \tilde l_{K,j}:= C_{\hat K,\eps}\cdot c^{\frac{\hat K-1-j}{1+\alpha}},\quad
    C_{\hat K,\eps} = \eps^{-\frac1\alpha} \left(\sum_{i=0}^{K-1} c^{\frac{\hat K-1-i}{1+\alpha}}\right)^{\frac1\alpha}
  \end{equation*}
  which by Lemma \ref{lemma:ML} minimizes
  $\sum_{j=0}^{\hat K-1}\tilde l_{K,j}$ under the constraint
  $\tilde e_{\hat K,2}((\tilde l_{K,j})_j)\le\eps$. Since also
  $\tilde e_{\hat K,2}((\hat l_{j})_j) \le\tilde e_{\hat K}((\hat
  l_{K,j})_j) \le\eps$, this implies
  \begin{equation}\label{eq:qoml1}
    \sum_{j=0}^{\hat K-1}\hat l_{j}\ge
    \sum_{j=0}^{\hat K-1}\tilde l_{K,j} = \eps^{-\frac{1}{\alpha}}\left(\frac{1-c^{\hat K/(1+\alpha)}}{1-c^{1/(1+\alpha)}} \right)^{\frac{1+\alpha}{\alpha}}
  \end{equation}
  where the equality holds by the calculation in \eqref{eq:costML}.

  Next observe that by definition, $K(2\eps)\in\N$ in \eqref{eq:ML} is
  minimal such that $\tilde e_{K(\eps),1}\le\eps$.  Since also
  $\tilde e_{\hat K,1}\le\tilde e_{\hat K}((\hat l_{j})_j)\le\eps$,
  this gives $K(2\eps)\le\hat K$. Therefore
  \begin{equation}\label{eq:qoml2}
    \eps^{-\frac{1}{\alpha}}\left(\frac{1-c^{\hat K/(1+\alpha)}}{1-c^{1/(1+\alpha)}} \right)^{\frac{1+\alpha}{\alpha}}
    \ge
    \eps^{-\frac{1}{\alpha}}\left(\frac{1-c^{K(2\eps)/(1+\alpha)}}{1-c^{1/(1+\alpha)}} \right)^{\frac{1+\alpha}{\alpha}}= \sum_{j=0}^{K(2\eps)} l_{K,j}(2\eps),
  \end{equation}
  where the last inequality holds by the calculation in
  \eqref{eq:costML}. In all, \eqref{eq:qoml1} and \eqref{eq:qoml2}
  yield
  \begin{equation*}
    \sum_{j=0}^{K(2\eps)-1}l_{K,j}(2\eps)\le \sum_{j=0}^{\hat K} \hat l_{j}.
  \end{equation*}
  Since $\hat K$ and $\hat l_{j}$ were arbitrary such that
  $\tilde e((\hat l_{j})_{j})\le\eps$,
  \begin{equation}\label{eq:allequalml}
    \sum_{j=0}^{K(2\eps)-1}l_{K,j}(2\eps)\le
    \inf\bigg\{\sum_{j=0}^{\hat K-1}\hat l_{j}\,:\,\tilde e_{\hat K}((\hat l_j)_j)\le\eps,~\hat K\in\N,~\hat l_j>0~\forall j\bigg\}
    \le \sum_{j=0}^{K(\eps)-1}l_{K,j}(\eps),
  \end{equation}
  where the second inequality holds due to
  $\tilde e_{K(\eps)}((l_{K,j}(\eps)))\le\eps$. By
  \eqref{eq:propMLasymp} all terms in \eqref{eq:allequalml} behave
  like $\eps^{-1/\alpha}$ as $\eps\to 0$, and this shows
  quasi-optimality in the sense of Def.~\ref{def:qoml}.
\end{proof}

\section{Gradient descent and accelerated gradient descent %
}\label{app:gdagd}

In this appendix we discuss the implications of our results for
gradient descent, accelerated gradient descent and the stochastic
versions of those algorithms. Throughout this section we assume that
$\cX$ is a Hilbert space and $\Phi:\cX\to\R$ is Fr\'echet
differentiable, $\mu$-strongly convex, i.e.\
\begin{equation}\label{eq:musc}
  \Phi(y)\ge\Phi(x)+\langle \nabla\Phi(x),y-x\rangle+\frac{\mu}{2}\|y-x\|^2\qquad\forall x,y\in\cX,
\end{equation}
and satisfies $L$-smoothness so that
\begin{equation}\label{eq:Lsmooth}
  \Phi(y)\le\Phi(x)+\langle\nabla\Phi(x),y-x\rangle+\frac{L}{2}\|y-x\|^2
  \qquad\forall x,y\in\cX.
\end{equation}
Moreover, we denote by $x_*\in\cX$ the unique minimizer of $\Phi$.

For the deterministic algorithms we assume the existence of
approximate gradients $g_l:\cX\to\cX$ such that
\begin{equation}\label{eq:ass2}
  \|\nabla\Phi(x)-g_l(x)\|\le \frac{l^{-\alpha}}{\eta}\qquad\forall x\in\cX,
\end{equation}
where $\eta>0$ will denote the step-size in the following.  For the
stochastic variants we will work under the assumption that for every
$x\in\cX$ there exists a random variable $G_l(x)\in\cX$ such that
\begin{equation}\label{eq:ass3}
  \mathbb{E}\|\nabla\Phi(x)-G_l(x)\|\le \frac{l^{-\alpha}}{\eta}\qquad\forall x\in\cX.
\end{equation}

\subsection{Gradient Descent (GD)}
The gradient descent update with step-size $\eta$ and approximate
gradient at level $l_k$ reads
\begin{equation}\label{eq:GD}
  x_{k+1} = x_k - \eta g_{l_k}(x_k).
\end{equation}

\begin{proposition}\label{prop:GD}
  Let $0 \leq \eta \leq 1/L$ and let $g_l$ satisfy \eqref{eq:ass2}.
  Then $x_k$ generated by \eqref{eq:GD} satisfies the
  bound~\eqref{eq:disc} with $e_k = \|x_{k+1} - x_*\|$ and
  $c =\sqrt{1-\eta \mu}$.
\end{proposition}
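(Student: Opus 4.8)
The plan is to establish the standard one-step contraction estimate for gradient descent on a $\mu$-strongly convex, $L$-smooth objective, but carried out carefully with the \emph{approximate} gradient $g_{l_k}$ in place of $\nabla\Phi(x_k)$, so that the gradient error contributes exactly the additive $l_k^{-\alpha}$ term in \eqref{eq:disc}. First I would write $x_{k+1}-x_* = x_k - x_* - \eta g_{l_k}(x_k)$ and split $g_{l_k}(x_k) = \nabla\Phi(x_k) + r_k$ where $r_k := g_{l_k}(x_k)-\nabla\Phi(x_k)$ satisfies $\|r_k\| \le l_k^{-\alpha}/\eta$ by \eqref{eq:ass2}. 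Then by the triangle inequality
\begin{equation*}
  \|x_{k+1}-x_*\| \le \|x_k - x_* - \eta\nabla\Phi(x_k)\| + \eta\|r_k\| \le \|x_k - x_* - \eta\nabla\Phi(x_k)\| + l_k^{-\alpha}.
\end{equation*}
This isolates the additive term with the correct constant $1$, so it remains only to show the exact-gradient map $x \mapsto x - \eta\nabla\Phi(x)$ is a contraction with factor $c = \sqrt{1-\eta\mu}$ on the range $0 \le \eta \le 1/L$.

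For that contraction I would use the two defining inequalities \eqref{eq:musc} and \eqref{eq:Lsmooth} together with $\nabla\Phi(x_*)=0$. Expanding,
\begin{equation*}
  \|x - x_* - \eta\nabla\Phi(x)\|^2 = \|x-x_*\|^2 - 2\eta\langle \nabla\Phi(x), x - x_*\rangle + \eta^2\|\nabla\Phi(x)\|^2.
\end{equation*}
The cross term is controlled by strong convexity applied with $(x,y) = (x, x_*)$ and $(x_*, x)$, or more directly by the co-coercivity-type bound that for $L$-smooth convex $\Phi$ one has $\langle \nabla\Phi(x), x-x_*\rangle \ge \tfrac{\mu}{2}\|x-x_*\|^2 + \tfrac{1}{2L}\|\nabla\Phi(x)\|^2$ (this follows by combining $\mu$-strong convexity with the standard bound $\Phi(x)-\Phi(x_*) \ge \tfrac{1}{2L}\|\nabla\Phi(x)\|^2$ coming from $L$-smoothness). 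Substituting, using $\eta \le 1/L$ so that $\eta^2\|\nabla\Phi(x)\|^2 \le \tfrac{\eta}{L}\|\nabla\Phi(x)\|^2 \le \eta\langle\nabla\Phi(x),x-x_*\rangle - \eta\mu\|x-x_*\|^2/1$ after the regrouping, I would arrive at $\|x-x_*-\eta\nabla\Phi(x)\|^2 \le (1-\eta\mu)\|x-x_*\|^2$, hence the contraction factor $\sqrt{1-\eta\mu}$.

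Finally I would combine the two pieces: with $e_k := \|x_k - x_*\|$ (I note the statement writes $e_k = \|x_{k+1}-x_*\|$, which should be read as the indexing convention $e_k = \|x_k - x_*\|$ so that \eqref{eq:disc} reads $e_{k+1} \le c\,e_k + l_k^{-\alpha}$),
\begin{equation*}
  e_{k+1} \le \sqrt{1-\eta\mu}\,e_k + l_k^{-\alpha} = c\,e_k + l_k^{-\alpha},
\end{equation*}
which is precisely \eqref{eq:disc}. The cost model \eqref{eq:cost} is immediate since computing $x_k$ requires one evaluation of $g_{l_j}$ at cost $l_j$ for each $j=0,\dots,k-1$. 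The only mildly delicate point — the main obstacle — is getting the co-coercivity inequality in the infinite-dimensional Hilbert space setting with the precise constants that make $\eta \le 1/L$ the exact threshold for the factor $\sqrt{1-\eta\mu}$; this is a known computation but must be done without slack, and one should double-check the edge case $\eta\mu = 1$ (which can occur only if $\mu = L$, i.e.\ $\Phi$ quadratic) where the contraction factor degenerates to $0$.
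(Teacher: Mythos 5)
Your proposal is correct and follows essentially the same route as the paper's proof: split off the gradient error via the triangle inequality so that \eqref{eq:ass2} contributes the $l_k^{-\alpha}$ term, then show $\|x_k-x_*-\eta\nabla\Phi(x_k)\|^2\le(1-\eta\mu)\|x_k-x_*\|^2$ by expanding the square and using $\mu$-strong convexity together with $\Phi(x_k)-\Phi(x_*)\ge\frac{1}{2L}\|\nabla\Phi(x_k)\|^2$ (your packaged co-coercivity bound is exactly the paper's two intermediate inequalities combined), with $\eta\le 1/L$ killing the leftover $-\eta(1/L-\eta)\|\nabla\Phi(x_k)\|^2$ term. Your reading of the statement's $e_k=\|x_{k+1}-x_*\|$ as an indexing typo for $e_k=\|x_k-x_*\|$ also matches the paper's own proof, which concludes with $e_{k+1}=\|x_{k+1}-x_*\|\le c\,e_k+l_k^{-\alpha}$.
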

\begin{proof}
  Observe that
  \begin{align*}
    \|x_{k+1}-x_\ast\|  &= \|x_k-x_\ast - \eta\nabla_x\Phi(x_k) + \eta \nabla_x\Phi(x_k)-\eta g_{l_k}(x_k) \|\\
                        &\le\|x_{k}-x_\ast - \eta\nabla_x\Phi(x_k)\| + \eta\|\nabla_x\Phi(x_k)-g_{l_k}(x_k)\|.
  \end{align*}
  Using $\mu$-strong convexity \eqref{eq:musc} and $L$-smoothness
  \eqref{eq:Lsmooth} we obtain the following upper bound,
  \begin{align*}
    \|x_k-x_\ast - \eta\nabla_x\Phi(x_k)\|^2 &= \|x_k-x_\ast\|^2 - 2\eta\langle x_k-x_\ast,\nabla_x \Phi(x_k)\rangle  + \eta^2\|\nabla_x\Phi(x_k)\|^2\\ 
                                             &\le \|x_k-x_\ast\|^2 - \eta \mu \|x_k-x_\ast\|^2- 2\eta (\Phi(x_k)-\Phi(x_\ast)) + \eta^2 \|\nabla_x\Phi(x_k)\|^2\\
                                             &\le (1 - \eta \mu) \|x_k-x_\ast\|^2 - \eta(1/L-\eta)\|\nabla_x\Phi(x_k)\|^2 \\
                                             &\le (1 - \eta \mu) \|x_k-x_\ast\|^2.
  \end{align*}
  Combining these inequalities with assumption~\eqref{eq:ass2} leads
  to desired recursion~\eqref{eq:disc}
  \begin{equation*}
    e_{k+1} = \|x_{k+1}-x_\ast\|
    \le \sqrt{1-\eta \mu}\|x_k-x_\ast\|+  l_k^{-\alpha}  = c e_{k}+l_k^{-\alpha}.
  \end{equation*}
\end{proof}

It now follows from Thm.~\ref{thm:ML} that:

\begin{corollary}[MLGD]\label{cor:MLGD}
  Consider the setting of Proposition~\ref{prop:GD}. Then with the
  levels $l_{K,j}(\eps)$ as in \eqref{eq:ML}, $x_K$ generated by
  \eqref{eq:GD} satisfies $e_K:=\|x_K-x_*\|\le\eps$, and it holds
  $\sum_{j=0}^{K-1}l_{K,j}(\eps)=O(\eps^{-\alpha})$ as $\eps\to 0$.
\end{corollary}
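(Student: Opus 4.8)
The plan is to observe that the corollary follows by specializing the abstract multilevel result to gradient descent: Proposition~\ref{prop:GD} supplies the error-decay half of Assumption~\ref{ass:disc}, the cost half holds by construction, and Theorem~\ref{thm:ML} then delivers both the error guarantee and the cost asymptotics. First I would fix a step size $0<\eta\le 1/L$ and set $c:=\sqrt{1-\eta\mu}$; since $\mu\le L$ and $\eta>0$ we have $0<\eta\mu\le\eta L\le 1$, hence $c\in(0,1)$. Proposition~\ref{prop:GD} then shows that $e_k:=\|x_k-x_*\|$ (the convention used in the corollary) satisfies $e_{k+1}\le c\,e_k+l_k^{-\alpha}$, which is precisely \eqref{eq:disc}. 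Second, by \eqref{eq:ass2} one evaluation of $g_{l_k}$ costs $l_k$, so computing $x_K$ through \eqref{eq:GD} incurs cost $\sum_{j=0}^{K-1}l_j$, matching the cost model \eqref{eq:cost}. Thus the iteration \eqref{eq:GD} fits the framework of Section~\ref{sec:ML_opti} with this $c$ and the given $\alpha$.

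With both parts of Assumption~\ref{ass:disc} in force, I would apply Theorem~\ref{thm:ML} with the level choice \eqref{eq:ML}. The theorem guarantees $\tilde e_{K(\eps)}\bigl((l_{K,j}(\eps))_{j<K(\eps)}\bigr)\le\eps$; combining this with the elementary inequality $e_K\le\tilde e_K$ coming from the recursive expansion \eqref{eq:ek} yields $e_K=\|x_K-x_*\|\le\eps$, which is the first assertion. For the second assertion, Theorem~\ref{thm:ML} identifies $\sum_{j=0}^{K-1}l_{K,j}(\eps)$ with ${\rm cost}_{\rm ML}(\eps)$ and provides its order as $\eps\to0$, giving the claimed bound $O(\eps^{-\alpha})$.

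Since the two ingredients are already isolated---Proposition~\ref{prop:GD} for the contraction and Theorem~\ref{thm:ML} for the optimized levels---there is no genuinely hard step; the proof is essentially a verification that the hypotheses of Theorem~\ref{thm:ML} are met. The only points needing a line of care are (i) checking $c\in(0,1)$, which as noted reduces to $0<\eta\mu\le 1$ and follows from $0<\eta\le1/L$ together with $\mu\le L$, and (ii) passing from the surrogate bound $\tilde e_K\le\eps$ furnished by Theorem~\ref{thm:ML} to the actual error $e_K\le\eps$ via the monotone relation $e_K\le\tilde e_K$. I would finally remark that the single-level analogue follows identically by invoking Theorem~\ref{thm:SL} in place of Theorem~\ref{thm:ML}, at the cost of an additional logarithmic factor.
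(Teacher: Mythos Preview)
Your proposal is correct and matches the paper's own argument, which consists solely of the sentence ``It now follows from Thm.~\ref{thm:ML}''; you have simply spelled out the verification of Assumption~\ref{ass:disc} via Proposition~\ref{prop:GD} and the passage $e_K\le\tilde e_K$ in more detail than the paper does. One small remark: Theorem~\ref{thm:ML} actually yields ${\rm cost}_{\rm ML}(\eps)\simeq\eps^{-1/\alpha}$, so the exponent $-\alpha$ appearing in the corollary (and repeated in your final line) is a typo in the paper rather than something your argument establishes.
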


\subsection{Stochastic Gradient Descent (SGD)}
\label{sec:stochgrad_details}
We now consider the stochastic setting, i.e.\ we assume given random
variables $G_l(x)$ as in \eqref{eq:ass3}.  The stochastic gradient
descent update with step-size $\eta$ and approximate stochastic
gradient at level $l_k$ then reads
\begin{equation}\label{eq:SGD}
  x_{k+1} = x_k - \eta G_{l_k}(x_k).
\end{equation}
\begin{proposition}\label{prop:SGD}
  Let $0 \leq \eta \leq 1/L$ and let $G_l$ satisfy \eqref{eq:ass3}.
  Then $x_k$ generated by \eqref{eq:SGD} satisfies the bound
  \eqref{eq:disc} with $e_k = \mathbb{E}[\|x_{k+1} - x_*\|]$ and
  $c =\sqrt{1-\eta \mu}$.
\end{proposition}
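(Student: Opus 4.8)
The plan is to mirror the proof of Proposition~\ref{prop:GD} (deterministic gradient descent), replacing pointwise norms by their expectations and invoking the assumed bound \eqref{eq:ass3} on the stochastic gradient error in the one place where the approximation enters. First I would write, as in the deterministic case,
\[
  \|x_{k+1}-x_\ast\|
  = \|x_k-x_\ast - \eta\nabla\Phi(x_k) + \eta(\nabla\Phi(x_k)-G_{l_k}(x_k))\|
  \le \|x_k-x_\ast - \eta\nabla\Phi(x_k)\| + \eta\|\nabla\Phi(x_k)-G_{l_k}(x_k)\|,
\]
valid pathwise (for each realization of $G_{l_k}$). The deterministic term $\|x_k-x_\ast-\eta\nabla\Phi(x_k)\|$ is bounded exactly as in Proposition~\ref{prop:GD} using $\mu$-strong convexity \eqref{eq:musc} and $L$-smoothness \eqref{eq:Lsmooth}, giving $\|x_k-x_\ast-\eta\nabla\Phi(x_k)\|\le\sqrt{1-\eta\mu}\,\|x_k-x_\ast\|$; note this bound involves only $x_k$ and not the randomness introduced at step $k$.

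Next I would take expectations. The subtle point is the filtration: let $\mathcal F_k$ denote the $\sigma$-algebra generated by $G_{l_0},\dots,G_{l_{k-1}}$, so that $x_k$ is $\mathcal F_k$-measurable. Conditioning on $\mathcal F_k$ and using \eqref{eq:ass3} (which holds for every fixed $x$, in particular for the $\mathcal F_k$-measurable random point $x_k$) yields $\mathbb E[\|\nabla\Phi(x_k)-G_{l_k}(x_k)\|\mid\mathcal F_k]\le l_k^{-\alpha}/\eta$. Combining,
\[
  \mathbb E[\|x_{k+1}-x_\ast\|\mid\mathcal F_k]
  \le \sqrt{1-\eta\mu}\,\|x_k-x_\ast\| + l_k^{-\alpha},
\]
and taking total expectations and using the tower property gives $e_{k+1}\le\sqrt{1-\eta\mu}\,e_k+l_k^{-\alpha}$ with $e_k=\mathbb E\|x_k-x_\ast\|$, which is precisely \eqref{eq:disc} with $c=\sqrt{1-\eta\mu}\in(0,1)$. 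The cost model \eqref{eq:cost} is immediate since computing $x_K$ requires one evaluation of $G_{l_j}$ per step $j=0,\dots,K-1$.

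The main (and only real) obstacle is the measurability/conditioning bookkeeping: one must be careful that \eqref{eq:ass3} is an a priori bound uniform over $x\in\cX$, so it may legitimately be applied at the random iterate $x_k$ after conditioning on the past, rather than naively pulling the expectation inside. Everything else is a verbatim repetition of the deterministic argument. (A cosmetic caveat, as in Proposition~\ref{prop:GD}: the statement writes $e_k=\mathbb E[\|x_{k+1}-x_\ast\|]$ with an index shift, but the recursion \eqref{eq:disc} is established exactly as above with either indexing convention, since it is just the relabelled inequality $e_{k+1}\le c\,e_k+l_k^{-\alpha}$.)
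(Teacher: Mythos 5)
Your proposal is correct and follows essentially the same route as the paper's proof: a pathwise triangle inequality, the deterministic contraction bound $\|x_k-x_\ast-\eta\nabla\Phi(x_k)\|\le\sqrt{1-\eta\mu}\,\|x_k-x_\ast\|$ from strong convexity and smoothness, then conditioning on the filtration generated by the past stochastic gradients, applying \eqref{eq:ass3} at the $\mathcal F_k$-measurable iterate $x_k$, and taking total expectations via the tower property. Your explicit attention to the measurability/conditioning step is a slightly more careful articulation of what the paper does implicitly with $\mathbb E_k[\cdot]$, but the argument is the same.
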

\begin{proof}
  The proof proceeds in the exact same manner as for gradient descent
  (taking expectations and replacing $g_{l_k}(x_k)$ with
  $G_{l_k}(x_k)$). To be more precise, taking the expectation
  w.r.t. the filtration up to iteration $k$, i.e.
  $\mathbb E_k[\cdot]:=\mathbb E[\cdot\mid
  \sigma(G_{l_j}(x_j),j=1,\dots,k-1)]$, we have
  \begin{equation*}
    \mathbb E_k[\|x_{k+1}-x_\ast\|] \le \sqrt{1-\eta \mu}\|x_k-x_\ast\| + \eta \mathbb E_k[\|\nabla_x\Phi(x_k)-G_{l_k}(x_k))\|],
  \end{equation*}
  where we again applied the inequality
  $\|x_k-x_\ast - \eta\nabla_x\Phi(x_k)\|^2\le (1-\eta
  \mu)\|x_k-x_\ast\|^2$ under $\mu$-strong convexity and
  $L$-smoothness.  Taking the expectation (this time without
  conditioning), we obtain
  \begin{equation*}
    e_{k+1} = \mathbb E[\|x_{k+1}-x_\ast\|] \le \sqrt{1-\eta \mu}\mathbb E[\|x_k-x_\ast\|] + \eta \mathbb E[\|\nabla_x\Phi(x_k)-G_{l_k}(x_k))\|]\le ce_k + l_k^{-\alpha}.
  \end{equation*}  
\end{proof}

  \begin{corollary}[MLSGD]\label{cor:MLSGD}
    Consider the setting of Proposition~\ref{prop:SGD}. Then with the
    levels $l_{K,j}(\eps)$ as in \eqref{eq:ML}, $x_K$ generated by
    \eqref{eq:SGD} satisfies $e_K:=\mathbb{E}[\|x_K-x_*\|]\le\eps$,
    and it holds $\sum_{j=0}^{K-1}l_{K,j}(\eps)=O(\eps^{-\alpha})$ as
    $\eps\to 0$.
  \end{corollary}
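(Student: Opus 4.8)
The plan is to read Corollary~\ref{cor:MLSGD} as an immediate consequence of Proposition~\ref{prop:SGD} combined with Theorem~\ref{thm:ML}: Proposition~\ref{prop:SGD} supplies the error decay~\eqref{eq:disc}, the cost model~\eqref{eq:cost} is built into the update rule~\eqref{eq:SGD}, and Theorem~\ref{thm:ML} then converts the quasi-optimal level choice~\eqref{eq:ML} into the stated error and cost bounds. No estimate beyond those already established is needed.

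Concretely, I would first record that the quantity $e_k:=\mathbb{E}[\|x_k-x_\ast\|]$ associated with the iterates of~\eqref{eq:SGD} obeys $e_{k+1}\le c\,e_k+l_k^{-\alpha}$ with $c=\sqrt{1-\eta\mu}\in(0,1)$ for $0<\eta\le 1/L$; this is precisely what the proof of Proposition~\ref{prop:SGD} shows, the constant lying strictly in $(0,1)$ since $\mu$-strong convexity together with $L$-smoothness forces $\mu\le L$ (the degenerate endpoint $\eta\mu=1$ being excluded by taking $\eta<1/L$ when $\mu=L$). Hence the error decay~\eqref{eq:disc} of Assumption~\ref{ass:disc} holds. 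Second, I would observe that computing $x_K$ through~\eqref{eq:SGD} amounts to the successive evaluations $G_{l_0}(x_0),\dots,G_{l_{K-1}}(x_{K-1})$, each of computational cost $l_j$ by convention, so that ${\rm cost}(x_K)=\sum_{j=0}^{K-1}l_j$, i.e.\ the cost model~\eqref{eq:cost} is satisfied as well.

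With Assumption~\ref{ass:disc} in force, I would invoke Theorem~\ref{thm:ML}: for $\eps\in(0,e_0)$ the choice~\eqref{eq:ML} is a quasi-optimal multilevel choice, so $\tilde e_{K(\eps)}((l_{K,j}(\eps))_{j})\le\eps$, and combining this with the elementary bound $e_{K(\eps)}\le\tilde e_{K(\eps)}((l_{K,j}(\eps))_{j})$ from~\eqref{eq:ek} yields $\mathbb{E}[\|x_{K(\eps)}-x_\ast\|]\le\eps$ for all sufficiently small $\eps>0$. The cost statement is then exactly the asymptotics~\eqref{eq:costMLasymp} of Theorem~\ref{thm:ML}.

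I do not anticipate a substantive obstacle: the argument is a direct specialization of the abstract framework, and the only points requiring attention are bookkeeping ones, namely aligning the error functional $e_K=\mathbb{E}[\|x_K-x_\ast\|]$ used in the corollary with the indexing actually carried out in the proof of Proposition~\ref{prop:SGD}, and checking $c\in(0,1)$ rather than merely $c\in[0,1]$ so that Assumption~\ref{ass:disc} applies verbatim. The single-level counterpart follows in the same way with Theorem~\ref{thm:SL} in place of Theorem~\ref{thm:ML}.
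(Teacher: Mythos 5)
Your proposal is correct and matches the paper's intended argument exactly: Proposition~\ref{prop:SGD} verifies the error decay \eqref{eq:disc} with $e_k=\mathbb{E}[\|x_k-x_*\|]$ and $c=\sqrt{1-\eta\mu}$, the cost model \eqref{eq:cost} holds by convention, and Theorem~\ref{thm:ML} then delivers both the error bound and the cost asymptotics \eqref{eq:costMLasymp}. Your side remarks (checking $c\in(0,1)$, and noting that the cost in fact scales as $\eps^{-1/\alpha}$ per Theorem~\ref{thm:ML}) are sound bookkeeping consistent with the paper.
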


  We next discuss a standard example of stochastic gradient descent,
  namely with $G_l$ being a Monte Carlo estimator. However, we
  emphasize that other approximation schemes are applicable as well in
  this setting.
  \begin{example}[SGD with dynamic sampling]\label{ex:dynamicsgd}
    We consider a stochastic optimization problem in the form of
    \eqref{eq:general_opti} by
    \begin{equation}\label{eq:stoch_opti}
      \min_{x\in\mathcal X}\ \Phi(x),\quad \Phi(x):= \mathbb E_\xi[\varphi(x,\xi)],
    \end{equation}
    where $\xi$ is a random variable on a underlying probability space
    $(\Omega,\mathcal F,\mathbb P)$ with state space $(E,\mathcal E)$
    and $\varphi:\mathcal X\times E\to \mathbb R$ is the stochastic
    objective function. %
    The expected value $\mathbb E_\xi[\varphi(x,\xi)]$ and its
    gradient are often not available analytically.  Assuming %
    access to i.i.d.~samples $\{\xi_{k}^n\}_{n=1}^{l_k}$ of $\xi$, we
    can define a stochastic gradient approximation by the Monte Carlo
    estimator
    $G_{l_k}(x_k):=\frac{1}{l_k} \sum_{n=1}^{l_k}
    \nabla_x\varphi(x_k,\xi_{k}^n)$. %
    The level $l_k$ %
    describes the batch size %
    in iteration $k$, which correspond to the number of required
    evaluations of $\varphi$. In this sense, $l_k$ describes the
    computational cost required to evaluate $G_{l_k}$.  Under certain
    integrability assumptions, $G_{l_k}$ is an unbiased estimator of
    $\nabla\Phi(x_k)$,
    i.e.~$\nabla\Phi(x) = \mathbb E_\xi[\nabla\varphi(x,\xi)]$, and it
    holds %
    \begin{equation*}
      \mathbb E[\|\nabla \Phi(x) - G_{l}(x)\|] \le
      \sqrt{\mathbb E[\|\nabla \Phi(x) - G_{l}(x)\|^2]}
      \le {\left(\frac{\mathbb E_\xi[\|\nabla\varphi(x,\xi)-\mathbb E_\xi[\nabla\varphi(x,\xi)]\|^2]}{l}\right)^{1/2},}
    \end{equation*}
    i.e.\ we have the second inequality in \eqref{eq:ass} (up to a
    constant, cp.~Rmk.~\ref{rmk:constant}) with $\alpha =
    1/2$. Applying the multilevel strategy \eqref{eq:ML} then yields a
    stochastic optimization method with increasing batchsize.

    By Corollary \ref{cor:MLSGD}, achieving error
    $\mathbb{E}[\|x_K-x_*\|]\le\eps$ requires $O(\eps^{-1/2})$
    evaluations of $\nabla\varphi(x,\xi)$ (since
    $\sum_{j=0}^{K-1}l_{K,j}(\eps)$ coincides with the number of
    evaluations of $\nabla\varphi(x,\xi)$ in the current
    setting). This is the same convergence rate that is obtained for
    batch size $1$ and decreasing step size $\eta_k\sim \frac{1}{k}$.
    However, we point out that the present multilevel version, which
    uses constant step size and increasing batch size, allows for
    parallelization in the gradient evaluations.
  \end{example}

\subsection{Accelerated Gradient Descent (AGD)}
We write the accelerated gradient descent algorithm as the following
update (see~\cite{Nesterov83})
\begin{align*}
  p_{k+1} &= q_k - \alpha %
            \nabla\Phi
            (q_k)\\
  q_{k+1} &= p_{k+1} + \beta(p_{k+1} - p_k), 
\end{align*}
where $\alpha = 1/L$ and
$\beta = \frac{\sqrt{L} - \sqrt{\mu}}{\sqrt{L}+ \sqrt{\mu}}.$ Defining
$x_k = [p_k, q_k]$ we can represent AGD as~\eqref{eq:disc_seq} using
the operator
\begin{align*}
  \Psi = \left[\begin{array}{cc}
                 0 & (I - \alpha %
                     \nabla\Phi
                     )\\
                 \beta I & (I + \beta)(I - \alpha %
                           \nabla\Phi)
               \end{array}\right].
\end{align*}

AGD using approximated gradients at level $l_k$ can then be
represented as the following three sequence update (see
e.g.~\cite{Nesterov83} for equivalence):
\begin{subequations}\label{eq:AGD}
  \begin{align}
    x_{k} &= \frac{\tau}{1+ \tau} z_k + \frac{1}{1+\tau} y_k\\
    y_{k+1} &= x_{k} - \frac{1}{L}g_{l_k}(x_k)\\
    z_{k+1} &= z_k + \tau(x_k - z_k)- \frac\tau\mu g_{l_k}(x_k).
  \end{align}
\end{subequations}
\begin{proposition}\label{prop:AGD}
  Suppose that $g_l$ satisfies \eqref{eq:ass2} with
  $\eta = 1/\sqrt{L}$ and $\tau = \sqrt{\mu/L}$.  Then $(y_{k},z_{k})$
  generated by \eqref{eq:AGD} satisfies \eqref{eq:disc} with
  $e_k = \Phi(y_k) - \Phi(x_*) + \frac{\mu}{2} \|z_k - x_*\|^2 $
  exponent $2\alpha$, and $c = 1- \tau$, more precisely
  \begin{equation*}
    \Phi(y_{k+1}) - \Phi(x_*) + \frac{\mu}{2} \|z_{k+1} - x_*\|^2
    \le (1-\tau)\Big( \Phi(y_k) - \Phi(x_*) + \frac{\mu}{2} \|z_k - x_*\|^2\Big)+l_k^{-2\alpha}.
  \end{equation*}
\end{proposition}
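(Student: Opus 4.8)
The plan is to mimic the standard Lyapunov-function analysis of Nesterov's accelerated gradient descent (as in \cite{Nesterov83}), but carefully tracking the extra error terms produced by replacing $\nabla\Phi(x_k)$ with the approximate gradient $g_{l_k}(x_k)$. Write $e_{l_k}(x_k):=\nabla\Phi(x_k)-g_{l_k}(x_k)$ so that, by \eqref{eq:ass2} with $\eta=1/\sqrt L$, we have $\|e_{l_k}(x_k)\|\le l_k^{-\alpha}/\eta=\sqrt L\, l_k^{-\alpha}$. Define the Lyapunov function $\mathcal E_k:=\Phi(y_k)-\Phi(x_*)+\frac{\mu}{2}\|z_k-x_*\|^2$. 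The goal is the one-step contraction $\mathcal E_{k+1}\le(1-\tau)\mathcal E_k+l_k^{-2\alpha}$ with $\tau=\sqrt{\mu/L}$.

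First I would bound the objective decrease $\Phi(y_{k+1})-\Phi(x_k)$ using $L$-smoothness \eqref{eq:Lsmooth} at $y_{k+1}=x_k-\frac1L g_{l_k}(x_k)$, which yields $\Phi(y_{k+1})\le\Phi(x_k)-\frac1{2L}\|\nabla\Phi(x_k)\|^2+\frac1{2L}\|e_{l_k}(x_k)\|^2$ after completing the square (the cross terms combine so that the gradient-error contribution is a clean squared term). Second, I would expand $\frac\mu2\|z_{k+1}-x_*\|^2$ using the $z$-update $z_{k+1}=z_k+\tau(x_k-z_k)-\frac\tau\mu g_{l_k}(x_k)$; this introduces the inner product $-\tau\langle z_k+\tau(x_k-z_k)-x_*,\,g_{l_k}(x_k)\rangle$ plus a term $\frac{\tau^2}{2\mu}\|g_{l_k}(x_k)\|^2$. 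In these inner products I replace $g_{l_k}(x_k)$ by $\nabla\Phi(x_k)-e_{l_k}(x_k)$ and isolate the exact-gradient part (which is handled by the usual AGD algebra) from the error part, which I bound via Cauchy–Schwarz and then Young's inequality, absorbing $\|z_k-x_*\|$-type factors into the contraction and leaving a controlled multiple of $\|e_{l_k}(x_k)\|^2$.

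Third, for the exact-gradient terms I would invoke $\mu$-strong convexity \eqref{eq:musc} — specifically $\langle\nabla\Phi(x_k),x_k-x_*\rangle\ge\Phi(x_k)-\Phi(x_*)+\frac\mu2\|x_k-x_*\|^2$ and $\langle\nabla\Phi(x_k),x_k-y_k\rangle\ge\Phi(x_k)-\Phi(y_k)$ — together with the definition $x_k=\frac{\tau}{1+\tau}z_k+\frac1{1+\tau}y_k$ (so that $x_k-z_k$ and $x_k-y_k$ are proportional), exactly as in the clean convergence proof of AGD, to show that the exact part of $\mathcal E_{k+1}$ is bounded by $(1-\tau)\mathcal E_k$. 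Collecting all the error contributions, they take the form $C\|e_{l_k}(x_k)\|^2$ for a constant $C$ depending only on $L,\mu$ (hence on $\tau$); since $\|e_{l_k}(x_k)\|^2\le L\,l_k^{-2\alpha}$, by Remark~\ref{rmk:constant} (rescaling levels by a constant, which does not affect the asymptotic cost) we may write this as $l_k^{-2\alpha}$, giving the claimed recursion with exponent $2\alpha$.

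The main obstacle is the bookkeeping in the second step: the $z$-update mixes $g_{l_k}$ into a quadratic form, so after expansion one gets cross terms coupling the gradient error to $z_k-x_*$ (and, through $x_k$, to $y_k-x_*$), and one must choose the split in Young's inequality so that the $\|z_k-x_*\|^2$ piece is absorbed by the $(1-\tau)\frac\mu2\|z_k-x_*\|^2$ budget without spoiling the contraction factor — this requires the step sizes $\eta=1/\sqrt L$, $\tau=\sqrt{\mu/L}$ to be plugged in at the right moment and is where the algebra is most delicate. Everything else is the textbook AGD estimate.
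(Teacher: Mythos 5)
Your overall strategy coincides with the paper's: the same Lyapunov function, the same descent estimate from $L$-smoothness (giving $\Phi(y_{k+1})-\Phi(x_k)\le \frac{1}{2L}\|g_{l_k}(x_k)-\nabla\Phi(x_k)\|^2-\frac{1}{2L}\|\nabla\Phi(x_k)\|^2$), and the standard strong-convexity algebra for the exact-gradient part. The divergence is in how the $z$-update is treated, and this is exactly where your plan has a gap. You propose to expand $\frac{\mu}{2}\|z_{k+1}-x_*\|^2$ directly, substitute $g_{l_k}=\nabla\Phi-e_{l_k}$ inside the inner products, and control the resulting cross terms by Cauchy--Schwarz and Young's inequality, ``absorbing'' the $\|z_k-x_*\|^2$-type pieces into the contraction. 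But any such absorption either weakens the contraction factor (you end up with $1-\tau+\delta$ instead of $1-\tau$) or requires identifying explicit slack in the exact AGD estimate to pay for the $\delta$-terms; you flag this as the delicate point but do not resolve it, and Remark~\ref{rmk:constant} only repairs a multiplicative constant in front of $l_k^{-2\alpha}$, not a degraded value of $c$. So, as outlined, your argument yields an inequality of the form $e_{k+1}\le \tilde c\, e_k + C\,l_k^{-2\alpha}$ for some $\tilde c\in(0,1)$ depending on the Young parameters --- which is enough for Assumption~\ref{ass:disc} and hence for Corollary~\ref{cor:MLAGD} after rescaling levels, but it is not the proposition as stated, with $c=1-\tau$ and coefficient $1$.

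The paper's proof avoids cross terms altogether: it introduces $\tilde z_{k+1}:=z_{k+1}-\frac{\tau}{\mu}\bigl(\nabla\Phi(x_k)-g_{l_k}(x_k)\bigr)$, i.e.\ the $z$-update performed with the exact gradient, applies the triangle inequality at the level of norms, $\|z_{k+1}-x_*\|\le\|\tilde z_{k+1}-x_*\|+\frac{\tau}{\mu}\|\nabla\Phi(x_k)-g_{l_k}(x_k)\|$, and only then squares. As a result the gradient error enters the entire argument through exactly two additive terms of size $\frac{1}{2L\eta^2}\,l_k^{-2\alpha}=\frac{1}{2}\,l_k^{-2\alpha}$ (this is where $\eta=1/\sqrt{L}$ is used), which sum to the claimed $l_k^{-2\alpha}$, while the rest of the analysis involves only exact gradients and $\tilde z_{k+1}$, so the factor $1-\tau$ from the standard Lyapunov argument is untouched. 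Incorporating this comparison point (or an equivalent device that keeps the error out of the inner products) is the missing ingredient needed to turn your outline into a proof of the precise statement.
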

\begin{proof}
  We begin by using the $L$-smoothness of $\Phi$:
  \begin{align}
    \Phi(y_{k+1}) - \Phi(x_k) &\leq \langle \nabla \Phi(x_k), y_{k+1} - x_k\rangle + \frac{L}{2}\|x_k - y_{k+1}\|^2  \notag\\&= - \frac{1}{L} \langle \nabla \Phi(x_k), g_l(x_k)\rangle + \frac{1}{2L}\|g_l(x_k)\|^2 \notag \\
                              & = \frac{1}{2L}\|g_l(x_k) - \nabla \Phi(x_k) \|^2  - \frac{1}{2L}\|\nabla \Phi(x_k)\|^2 \leq \frac{l_k^{-2\alpha}}{2L\eta^2} - \frac{1}{2L}\|\nabla \Phi(x_k)\|^2.\label{eq:descent}
  \end{align}
  Denote
  $\tilde{z}_{k+1} := z_{k+1} - \frac{\tau}{\mu} (\nabla \Phi(x_k) -
  \nabla g_{l_k}(x_k)) $. The triangle inequality results in the
  following upper bound
  \begin{align*}
    \frac{\sqrt{\mu}}{2}\|z_{k+1} - x_*\| %
    \leq \frac{1}{2\sqrt{L}}\| \nabla \Phi(x_k) - g_l(x_k) \| + \frac{\sqrt{\mu}}{2}\|\tilde{z}_{k+1}-  x_*\| 
    \leq   \frac{l_k^{-\alpha}}{2\sqrt{L} \eta}+ \frac{\sqrt{\mu}}{2}\|\tilde{z}_{k+1}-  x_*\| .
  \end{align*}
  and using Jensen's inequality ($(a/2+b/2)^2 \le a^2/2+b^2/2$) we
  obtain the subsequent identity,
  \begin{align*}
    \frac{\sqrt{\mu}}{2}\|z_{k+1} - x_*\|^2 \leq  \frac{l_k^{-2\alpha}}{2L \eta^2}+ \frac{\mu}{2}\|\tilde{z}_{k+1}-  x_*\|^2.
  \end{align*}
  The rest of the Lyapunov analysis follows as normal (see
  e.g.~\cite{tu2017breaking} and~\cite{wilson2021lyapunov}).  We use
  the previous inequality and strong convexity to obtain the bound:%
  \begin{align*}
    \frac{\mu}{2}&\|z_{k+1} - x_*\|^2 - \frac{\mu}{2}\|z_{k} - x_*\|^2 
                   \leq \frac{\mu}{2}\|\tilde{z}_{k+1} - x_*\|^2 - \frac{\mu}{2}\|z_{k} - x_*\|^2 + \frac{l_k^{-2\alpha}}{2L \eta^2} \\
                 & = \tau \langle \nabla \Phi(x_k), x_* - z_k \rangle  - \tau\mu \langle x_k - z_k , x_* - z_k \rangle +  \frac{\mu}{2}\|\tilde{z}_{k+1} - z_k\|^2  + \frac{l_k^{-2\alpha}}{2L \eta^2} \\
                 & \leq  \tau \langle \nabla \Phi(x_k), x_k - z_k \rangle  - \tau\mu \langle x_k - z_k , x_* - z_k \rangle +  \frac{\mu}{2}\|\tilde{z}_{k+1} - z_k\|^2 \\
                 &\quad - \tau(\Phi(x_k) - \Phi(x_*) + \frac{\mu}{2}\|x_k - x_*\|^2 ) +  \frac{l_k^{-2\alpha}}{2L \eta^2} \\
                 & = \langle \nabla \Phi(x_k), x_k - y_k \rangle  - \frac{\tau\mu}{2} \| x_k - z_k \|^2 +  \frac{\mu}{2}\|\tilde{z}_{k+1} - z_k\|^2 + \ \frac{l_k^{-2\alpha}}{2L \eta^2}\\
                 &\quad - \tau(\Phi(x_k) - \Phi(x_*) + \frac{\mu}{2}\|z_k - x_*\|^2 ).
  \end{align*}
  The inequality uses the strong convexity of $\Phi$. Using the strong
  convexity of $\Phi$ again and the descent bound~\eqref{eq:descent}
  we have:
  \begin{align*}
    e_{k+1}  &\leq (1- \tau) e_k + \Phi(x_k) - \Phi(y_k) + \langle \nabla \Phi(x_k), x_k - y_k\rangle   - \frac{1}{2L}\|\nabla \Phi(x_k)\|^2 +\frac{l_k^{-2\alpha}}{L\eta^2} \\
             &\quad - \tau(\Phi(x_k) - \Phi(y_k))   - \frac{\tau\mu}{2} \| x_k - z_k \|^2 +  \frac{\mu}{2}\|\tilde{z}_{k+1} - z_k\|^2 \\
             & \leq  c e_k  -\frac{\mu}{2}\|x_k - y_k\|^2  - \frac{1}{2L}\|\nabla \Phi(x_k)\|^2 - \tau\langle \nabla \Phi(x_k), x_k - y_k\rangle + \frac{\tau L}{2}\|x_k - y_k\|^2  \\
             &\quad - \frac{\tau\mu}{2} \| x_k - z_k \|^2+  \frac{\mu}{2}\|\tilde{z}_{k+1} - z_k\|^2 +\frac{l_k^{-2\alpha}}{L\eta^2}  \\
             & = c e_k +  (\frac{\tau^2}{\mu}- \frac{1}{2L})\|\nabla \Phi(x_k)\|^2 + (\frac{\tau L}{2} - \frac{\mu}{2\tau})\| x_k - y_k \|^2+l_k^{-2\alpha} \\
             & = c e_k +l_k^{-2\alpha}.
  \end{align*}
  The second line uses strong convexity and smoothness of $\Phi$. The
  following line expands the term
  $\|\tilde{z}_{k+1} - z_k\|^2 = \|y_k - x_k - \frac{\tau}{\mu}\nabla
  \Phi(x_k)\|^2$ and uses the smoothness of $\Phi$ and identity
  $\eta = 1/\sqrt{L}$.
\end{proof}

  \begin{corollary}[MLAGD]\label{cor:MLAGD}
    Consider the setting of Proposition~\ref{prop:AGD}. Then with the
    levels $l_{K,j}(\eps)$ as in \eqref{eq:ML}, $(y_{K},z_{K})$
    generated by \eqref{eq:AGD} satisfies
    $e_K:=\Phi(y_K)-\Phi(x_*)+\frac{\mu}{2}\|z_K-x_*\|^2\le\eps$, and
    it holds $\sum_{j=0}^{K-1}l_{K,j}(\eps)=O(\eps^{-2\alpha})$ as
    $\eps\to 0$.
  \end{corollary}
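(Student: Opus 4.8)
The plan is to obtain the corollary by combining Proposition~\ref{prop:AGD} with Theorem~\ref{thm:ML}, after checking that the abstract framework of Section~\ref{sec:ML_opti} applies with the exponent $2\alpha$ in place of $\alpha$. First, I would note that Proposition~\ref{prop:AGD} is precisely the assertion that, in the stated setting ($\eta=1/\sqrt L$, $\tau=\sqrt{\mu/L}$, and $g_l$ satisfying \eqref{eq:ass2}), the iterates $(y_k,z_k)$ generated by \eqref{eq:AGD} obey the error-decay inequality \eqref{eq:disc} with contraction constant $c=1-\tau\in(0,1)$ (here $\tau\in(0,1)$ since $\mu<L$; the degenerate case $\mu=L$ is immediate), with the Lyapunov error $e_k=\Phi(y_k)-\Phi(x_*)+\frac{\mu}{2}\|z_k-x_*\|^2$, and with $2\alpha$ replacing $\alpha$. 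The doubling of the exponent is genuine and already accounted for in that proof: the $L$-smoothness descent step controls the \emph{squared} gradient-approximation error $\|g_{l_k}(x_k)-\nabla\Phi(x_k)\|^2$, which is $O(l_k^{-2\alpha})$ under \eqref{eq:ass2}.

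Second, I would verify the cost model \eqref{eq:cost}. One step of \eqref{eq:AGD} --- forming $x_k$ from $(y_k,z_k)$, evaluating $g_{l_k}(x_k)$, and updating $y_{k+1},z_{k+1}$ --- requires exactly one evaluation of the approximate gradient $g_{l_k}$, which by hypothesis has cost $l_k$, the remaining operations being $O(1)$. Hence the cost of producing $(y_K,z_K)$ equals $\sum_{j=0}^{K-1}l_j$, so Assumption~\ref{ass:disc} holds verbatim with $\alpha$ replaced by $2\alpha$.

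With these two points established, I would simply apply Theorem~\ref{thm:ML} with $2\alpha$ in the role of $\alpha$. Its level choice \eqref{eq:ML} makes the recursively expanded bound $\tilde e_{K(\eps)}$ from \eqref{eq:ek} satisfy $\tilde e_{K(\eps)}\le\eps$; since $e_k\le\tilde e_k$ by \eqref{eq:ek}, this gives $e_K=\Phi(y_K)-\Phi(x_*)+\frac{\mu}{2}\|z_K-x_*\|^2\le\eps$ for all sufficiently small $\eps>0$. The same theorem yields the claimed cost asymptotics for $\sum_{j=0}^{K-1}l_{K,j}(\eps)={\rm cost}_{\rm ML}(\eps)$, namely \eqref{eq:costMLasymp} read with $\alpha$ replaced by $2\alpha$.

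I do not expect a real obstacle: the analytic core --- the Lyapunov contraction estimate --- is already carried out in Proposition~\ref{prop:AGD}, and the optimal-level and cost computation in Theorem~\ref{thm:ML}. The only care needed is bookkeeping: confirming $c\in(0,1)$ so that Assumption~\ref{ass:disc} is applicable, observing that Theorem~\ref{thm:ML} is stated for an abstract error sequence and hence may be used with $e_k$ taken to be the AGD Lyapunov functional, and consistently carrying the substitution $\alpha\mapsto 2\alpha$ through \eqref{eq:ML} and \eqref{eq:costMLasymp}.
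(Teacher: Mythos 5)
Your overall route is the same one the paper intends: the corollary has no separate proof in the paper, being exactly the combination of Proposition~\ref{prop:AGD} (which verifies the error decay \eqref{eq:disc} for the Lyapunov quantity $e_k=\Phi(y_k)-\Phi(x_*)+\frac{\mu}{2}\|z_k-x_*\|^2$ with contraction $c=1-\tau$ and exponent $2\alpha$) with the abstract level choice and cost bound of Theorem~\ref{thm:ML}, plus the observation that one AGD step costs one evaluation of $g_{l_k}$, i.e.\ $l_k$. Your verification of these ingredients, including $c\in(0,1)$ and $e_K\le\tilde e_K\le\eps$, is fine.

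There is, however, one concrete point you wave through that you should not: Theorem~\ref{thm:ML} applied with the exponent $2\alpha$ gives ${\rm cost}_{\rm ML}(\eps)\simeq\eps^{-\frac{1}{2\alpha}}$ (this is \eqref{eq:costMLasymp} with $\alpha$ replaced by $2\alpha$), which is \emph{not} the quantity $\eps^{-2\alpha}$ appearing in the corollary; the two coincide only for $\alpha=1/2$, and $\eps^{-1/(2\alpha)}=O(\eps^{-2\alpha})$ holds as a (non-sharp) upper bound only when $\alpha\ge 1/2$. So your closing sentence, which identifies ``\eqref{eq:costMLasymp} read with $\alpha$ replaced by $2\alpha$'' with ``the claimed cost asymptotics,'' is not literally correct. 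The mismatch originates in the paper itself (the same discrepancy occurs between Corollary~\ref{cor:MLGD}, which states $O(\eps^{-\alpha})$, and Theorem~\ref{thm:ML}, which gives $\eps^{-1/\alpha}$, so the stated exponents are best read as typos for $\eps^{-1/\alpha}$ and $\eps^{-1/(2\alpha)}$), but a complete write-up should either state the cost as $O(\eps^{-1/(2\alpha)})$ or explicitly note the restriction under which the written bound $O(\eps^{-2\alpha})$ follows from it.
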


  Note that the cost $\sum_{j=0}^{K-1} l_{K,j}(\eps)$ for AGD in
  Corollary \ref{cor:MLAGD} increases at twice the rate compared to GD
  in Corollary \ref{cor:MLGD}. However, the AGD result is formulated
  for a quadratic cost quantity, so that the resulting convergence
  rate of the error in terms of the cost is asymptotically the same.

  \subsection{Accelerated Stochastic Gradient Descent (ASGD)}

  We now consider again the stochastic setting, i.e.\ we assume we are
  given random variables $G_{l}$ as in \eqref{eq:ass3}.  In the
  following let
  \begin{subequations}\label{eq:SAGD}
    \begin{align}%
      x_{k} &= \frac{\tau}{1+ \tau} z_k + \frac{1}{1+\tau} y_k \label{eq:upcoup}\\ 
      y_{k+1} &= x_{k} - \frac{1}{L}G_{l_k}(x_k)\\
      z_{k+1} &= z_k + \tau(x_k - z_k)- \frac\tau\mu G_{l_k}(x_k).
    \end{align}
  \end{subequations}
  \begin{proposition}\label{prop:ASGD}
    Suppose that $G_l$ satisfies \eqref{eq:ass3} with
    $\eta = 1/\sqrt{L}$ where $\tau = \sqrt{\mu/L}$.  Then
    $(y_{k},z_{k})$ generated by \eqref{eq:SAGD} satisfies
    \eqref{eq:disc} with
    $e_k = \mathbb{E}[\Phi(y_k)] - \Phi(x_*) + \frac{\mu}{2}
    \mathbb{E}[\|z_k - x_* \|^2]$, exponent $2\alpha$ and
    $c = 1- \tau$, more precisely
    \begin{equation*}
      \mathbb{E}[\Phi(y_{k+1})] - \Phi(x_*) + \frac{\mu}{2} \mathbb{E}[\|z_{k+1} - x_* \|^2]
      \le (1-\tau)\Big( \mathbb{E}[\Phi(y_k)] - \Phi(x_*) + \frac{\mu}{2} \mathbb{E}[\|z_k - x_* \|^2]\Big)+l_k^{-2\alpha}.
    \end{equation*}
  \end{proposition}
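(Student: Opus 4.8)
The plan is to run the \emph{same Lyapunov argument as in the proof of Proposition~\ref{prop:AGD}}, but to insert a conditional expectation at every step where the stochastic gradient $G_{l_k}(x_k)$ appears --- exactly as the proof of Proposition~\ref{prop:SGD} turns the proof of Proposition~\ref{prop:GD} into its stochastic counterpart. Write $\mathbb E_k[\cdot]:=\mathbb E[\cdot\mid \sigma(G_{l_j}(x_j),\,j=1,\dots,k-1)]$. The point is that $x_k$, $y_k$, $z_k$, and hence $\nabla\Phi(x_k)$, $x_k-y_k$ and $x_k-z_k$, are measurable with respect to the filtration up to iteration $k$, so every ``noise-free'' identity and inequality used in the proof of Proposition~\ref{prop:AGD} --- the $L$-smoothness descent step, the strong-convexity expansions, and the cancellation of the terms with coefficients $\frac{\tau^2}{\mu}-\frac1{2L}$ and $\frac{\tau L}{2}-\frac{\mu}{2\tau}$ for the choice $\tau=\sqrt{\mu/L}$ --- holds pointwise and is unaffected; only the terms containing $G_{l_k}(x_k)-\nabla\Phi(x_k)$ need a probabilistic estimate.

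Concretely, I would first take $\mathbb E_k$ in the $L$-smoothness step~\eqref{eq:descent} to get $\mathbb E_k[\Phi(y_{k+1})]-\Phi(x_k)\le \frac1{2L}\mathbb E_k[\|G_{l_k}(x_k)-\nabla\Phi(x_k)\|^2]-\frac1{2L}\|\nabla\Phi(x_k)\|^2$. Next, with $\tilde z_{k+1}:=z_{k+1}-\frac\tau\mu(\nabla\Phi(x_k)-G_{l_k}(x_k))$, the triangle inequality followed by $(a/2+b/2)^2\le a^2/2+b^2/2$ gives, after applying $\mathbb E_k$, a bound of the form $\mathbb E_k[\frac\mu2\|z_{k+1}-x_*\|^2]\le \frac1{2L\eta^2}\mathbb E_k[\|\nabla\Phi(x_k)-G_{l_k}(x_k)\|^2]+\frac\mu2\mathbb E_k[\|\tilde z_{k+1}-x_*\|^2]$. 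Substituting both into the same chain of inequalities as in the proof of Proposition~\ref{prop:AGD} (invoking strong convexity of $\Phi$, and using that $\tilde z_{k+1}-z_k=y_k-x_k-\frac\tau\mu\nabla\Phi(x_k)$ is $\mathcal F_k$-measurable) yields $\mathbb E_k[\Phi(y_{k+1})-\Phi(x_*)+\frac\mu2\|z_{k+1}-x_*\|^2]\le (1-\tau)\big(\Phi(y_k)-\Phi(x_*)+\frac\mu2\|z_k-x_*\|^2\big)+\frac1{L\eta^2}\mathbb E_k[\|\nabla\Phi(x_k)-G_{l_k}(x_k)\|^2]$. Taking total expectation and using linearity then gives the asserted recursion for $e_k=\mathbb E[\Phi(y_k)]-\Phi(x_*)+\frac\mu2\mathbb E[\|z_k-x_*\|^2]$.

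The main technical point --- and where this proof genuinely differs from that of Proposition~\ref{prop:SGD} --- is that in the AGD analysis the gradient error enters \emph{quadratically}, through $\|G_{l_k}(x_k)-\nabla\Phi(x_k)\|^2$, whereas in Proposition~\ref{prop:SGD} it entered only linearly. Hence the first-moment bound~\eqref{eq:ass3} by itself is not enough: one needs the second-moment estimate $\mathbb E_k[\|\nabla\Phi(x_k)-G_{l_k}(x_k)\|^2]\le l_k^{-2\alpha}/\eta^2$, and with $\eta=1/\sqrt L$ this makes $\frac1{L\eta^2}=1$, so the additive term becomes exactly $l_k^{-2\alpha}$, as in the statement. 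This stronger bound is what one actually verifies in the applications --- e.g.\ for the Monte Carlo estimator of Example~\ref{ex:dynamicsgd}, where~\eqref{eq:ass3} is itself obtained by first controlling the second moment --- so in the proof I would either assume it in place of~\eqref{eq:ass3} for this proposition, or note that the two coincide for the relevant estimators. Granting this, the rest is the routine Lyapunov bookkeeping already carried out in the proof of Proposition~\ref{prop:AGD}.
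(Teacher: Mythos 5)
Your proof is essentially the paper's own argument: the paper likewise establishes Proposition~\ref{prop:ASGD} by rerunning the Lyapunov analysis of Proposition~\ref{prop:AGD} with the conditional expectation $\mathbb{E}_k$ inserted wherever $G_{l_k}(x_k)$ appears, and then taking total expectation. Your observation that the quadratic appearance of the gradient error forces a second-moment bound $\mathbb{E}_k[\|\nabla\Phi(x_k)-G_{l_k}(x_k)\|^2]\le l_k^{-2\alpha}/\eta^2$ rather than the first-moment condition~\eqref{eq:ass3} is well taken --- the paper's proof uses exactly this bound without comment --- so flagging it explicitly is a gain in precision, not a deviation in method.
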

  \begin{proof}
    The proof proceeds in the exact same manner as accelerated
    gradient descent replacing $g_{l_k}$ with $G_{l_k}$. %
    In particular note that
    \begin{align*}
      \mathbb{E}_k[\Phi(y_{k+1})] - \Phi(x_k) %
      & \leq \frac{1}{2L}\mathbb{E}_k[\|G_l(x_k) - \nabla \Phi(x_k) \|^2  - \|\nabla \Phi(x_k)\|^2] \leq \frac{l_k^{-2\alpha}}{2L\eta^2} - \frac{1}{2L}\mathbb{E}_k\|\nabla \Phi(x_k)\|^2,
    \end{align*}
    and
    \begin{align*}
      \frac{\sqrt{\mu}}{2}\mathbb{E}_k\|z_{k+1} - x_*\| %
      &\leq \frac{1}{2\sqrt{L}}\mathbb{E}_k\| \nabla \Phi(x_k) - G_l(x_k) \| + \frac{\sqrt{\mu}}{2}\mathbb{E}_k\|\tilde{z}_{k+1}-  x_*\|.  
    \end{align*}
    Therefore,
    \begin{align*}
      \frac{\sqrt{\mu}}{2}\mathbb{E}_k\|z_{k+1} - x_*\|^2 \leq  \frac{l_k^{-2\alpha}}{2L \eta^2}+ \frac{\mu}{2}\mathbb{E}_k\|\tilde{z}_{k+1}-  x_*\|^2.
    \end{align*}
    Given the remainder of the proof relies on the strong convexity
    and smoothness of the function and update~\eqref{eq:upcoup} we
    obtain the following recursion following the same line of
    argumentation:
    \begin{align*}
      e_{k+1} \leq c e_k + l_k^{-2\alpha}.
    \end{align*}

  \end{proof}

  \begin{corollary}[MLASGD]
    Consider the setting of Proposition \ref{prop:ASGD}. Then with the
    levels $l_{K,j}(\eps)$ as in \eqref{eq:ML}, $(y_{K},z_{K})$
    generated by \eqref{eq:SAGD} satisfies
    $e_K:=\mathbb{E}[\Phi(y_k)] - \Phi(x_*) + \frac{\mu}{2}
    \mathbb{E}[\|z_k - x_* \|^2] \le\eps$, and it holds
    $\sum_{j=0}^{K-1}l_{K,j}(\eps)=O(\eps^{-2\alpha})$ as $\eps\to 0$.
  \end{corollary}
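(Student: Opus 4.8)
The plan is to obtain the statement as an immediate combination of Proposition~\ref{prop:ASGD} with the abstract multilevel convergence result Theorem~\ref{thm:ML}, the only subtlety being that the error recursion for accelerated stochastic gradient descent carries the exponent $2\alpha$ rather than $\alpha$, so the framework of Section~\ref{sec:ML_opti} must be invoked with $\alpha$ replaced by $2\alpha$ throughout. First I would record that Proposition~\ref{prop:ASGD} is exactly the error-decay part \eqref{eq:disc} of Assumption~\ref{ass:disc}: for the Lyapunov functional $e_k:=\mathbb{E}[\Phi(y_k)]-\Phi(x_*)+\tfrac{\mu}{2}\mathbb{E}[\|z_k-x_*\|^2]$ and for any admissible sequence of levels $l_k\ge 1$, the iterates of \eqref{eq:SAGD} obey $e_{k+1}\le c\,e_k+l_k^{-2\alpha}$ with $c=1-\tau\in(0,1)$. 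Since Proposition~\ref{prop:ASGD} is established for arbitrary level sequences, this decay holds uniformly over all choices of $(l_k)_k$, as demanded by Assumption~\ref{ass:disc}.

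Next I would verify the cost model \eqref{eq:cost}. One step of \eqref{eq:SAGD} calls the stochastic gradient surrogate $G_{l_k}$ exactly once, at cost $l_k$ by hypothesis, together with a fixed number of vector additions and rescalings whose cost is $O(1)$ and hence negligible against $l_k\ge 1$; thus computing $(y_{k},z_{k})$ from the initialization costs $\sum_{j=0}^{k-1}l_j$ up to a multiplicative constant, which by Remark~\ref{rmk:constant} leaves the asymptotic rate unchanged. With both items of Assumption~\ref{ass:disc} in force for the exponent $2\alpha$, Theorem~\ref{thm:ML} applies after substituting $2\alpha$ for $\alpha$: for $\eps\in(0,e_0)$ the choice of $K(\eps)$ and $l_{K,j}(\eps)$ in \eqref{eq:ML} gives $\tilde e_{K(\eps)}\bigl((l_{K,j}(\eps))_j\bigr)\le\eps$, and since the recursive expansion \eqref{eq:ek} yields $e_{K(\eps)}\le\tilde e_{K(\eps)}$, we conclude $e_{K(\eps)}\le\eps$. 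The cost bound for $\sum_{j=0}^{K(\eps)-1}l_{K,j}(\eps)$ claimed in the corollary is then read off from \eqref{eq:costMLasymp} with the same exponent substitution.

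The argument is mechanical and runs exactly parallel to Corollaries~\ref{cor:MLGD}, \ref{cor:MLSGD} and \ref{cor:MLAGD}; there is no genuine obstacle, since all the analytic content already sits in Proposition~\ref{prop:ASGD} (the strong-convexity/smoothness Lyapunov computation carried out under expectation) and in Theorem~\ref{thm:ML} (the quasi-optimal level selection). The one point worth making explicit in the write-up is the exponent bookkeeping: because \eqref{eq:disc} appears here as $e_{k+1}\le c\,e_k+l_k^{-2\alpha}$, every occurrence of $\alpha$ in the level formulas \eqref{eq:ML} and in the cost asymptotics \eqref{eq:costMLasymp} must be read as $2\alpha$, which is the source of the rate displayed in the corollary and of the doubled exponent relative to the non-accelerated Corollary~\ref{cor:MLGD}.
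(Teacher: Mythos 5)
Your overall route is exactly the intended one: the paper offers no separate argument for this corollary, which is meant to follow by feeding the recursion of Proposition~\ref{prop:ASGD} (error decay with additive term $l_k^{-2\alpha}$, valid for any admissible level sequence) together with the one-call-to-$G_{l_k}$-per-iteration cost model into Theorem~\ref{thm:ML} with the exponent $\alpha$ replaced by $2\alpha$, invoking Remark~\ref{rmk:constant} to absorb constants, precisely as in Corollaries~\ref{cor:MLGD}--\ref{cor:MLAGD}. Your verification of Assumption~\ref{ass:disc}, the uniformity over level sequences, and the step $e_{K}\le\tilde e_{K}$ are all fine.

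The last step, however, does not go through as written. Equation~\eqref{eq:costMLasymp} with $\alpha$ replaced by $2\alpha$ yields
\begin{equation*}
\sum_{j=0}^{K(\eps)-1} l_{K,j}(\eps)\simeq \eps^{-\frac{1}{2\alpha}},
\end{equation*}
not $O(\eps^{-2\alpha})$; the two exponents agree only in the special case $\alpha=\tfrac12$ (the Monte Carlo batching setting of Example~\ref{ex:dynamicsgd}), so the displayed cost bound cannot simply be ``read off'' from the substitution. The same tension is already present in the paper's own Corollary~\ref{cor:MLGD}, which states $O(\eps^{-\alpha})$ while Theorem~\ref{thm:ML} gives $O(\eps^{-1/\alpha})$, so you are faithfully reproducing the stated exponent, but your argument as constructed actually delivers $\eps^{-1/(2\alpha)}$ for the Lyapunov error $e_K\le\eps$. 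In your write-up you should either state this exponent explicitly (and note that the corollary's $\eps^{-2\alpha}$ presumably follows the same convention/typo as Corollary~\ref{cor:MLGD}), or restrict to $\alpha=\tfrac12$; asserting that the claimed $\eps^{-2\alpha}$ bound follows by the exponent substitution is, as it stands, a non sequitur.
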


\begin{remark}
  The Monte Carlo estimator discussed in the context of SGD, for
  example, can be used for accelerated SGD where we require increasing
  batchsize to ensure assumption~\eqref{eq:ass} holds with
  $\alpha = 1/2$.
\end{remark}

\section{Details for the running example}\label{app:running}
For every $f\in L^2(D)$ on the bounded convex polygonal Lipschitz
domain $D\subseteq\R^2$, denote by $u_f\in H_0^1(D)\subseteq L^2(D)$
the unique weak solution to \eqref{eq:PDE} with right-hand side $f$,
i.e.\
\begin{equation}\label{eq:PDE_weak}
  \int_D\nabla u_f^\top(s) \nabla v(x)+u_f(s)v(s)\dd s=\int_D f(s)v(s)
  \dd s\qquad\forall v\in H_0^1(D).
\end{equation}
\subsection{Well-definedness and regularity of
  $u_f$}\label{app:running_well}
Existence and well-definedness of $u_f$ is classical. Moreover, one
has the apriori estimate $\|u_f\|_{H_0^1(D)}\le \|f\|_{H^{-1}(D)}$, we
refer for example to \cite[\S 26.1]{MR4269305}.

Furthermore convexity of $D$ in fact implies $H^2(D)$ regularity of
$u_f$ and $\|u_f\|_{H^2(D)}\le\|f\|_{L^2(D)}$, see \cite[Theorem
31.30]{MR4269305}.

\subsection{Formula for $\nabla\Phi(f)$}\label{sec:running_gradient}
Consider the objective
\begin{equation}\label{eq:Phifobj}
  \Phi(f)=\frac12\|\Gamma^{-1/2}(F(f)-y)\|_{\R^{n_y}}^2
  +\frac{\lambda}{2}\|f\|_{L^2(D)}^2=:\cV(f,y)+R(f).
\end{equation}
in Example \ref{ex:2}. Clearly the gradient of $R(f)$ equals
$\lambda f\in L^2(D)$. It remains to compute
$\nabla_f\cV(f,y)\in L^2(D)$.

Introduce the operators
\begin{equation*}
  \cS:=\begin{cases}
    \R^{n_y}\to\R\\
    w\mapsto \frac{1}{2}\|\Gamma^{-1/2}(w-y)\|^2_{\R^{n_y}},
  \end{cases}\qquad
  \cO:=\begin{cases}
    L^2(D)\to\R^{n_y}\\
    f\mapsto (\int_D\xi_jf)_{j=1}^{n_y},
  \end{cases}
\end{equation*}
and
\begin{equation*}
  \cA:=\begin{cases}
    L^2(D)\to L^2(D)\\
    h\mapsto u_h,
  \end{cases}
\end{equation*}

Observe that $\cV(f,y)=\cS(\cO(\cA(f)))$.  Using that
$\cA:L^2(D)\to H_0^1(D)$ is bounded linear, it is easily seen that
$\cA:L^2(D)\to L^2(D)$ is bounded linear and self-adjoint.  Therefore
(by the chain rule)
\begin{equation}\label{eq:nablafcV}
  \nabla_f\cV(f,y) =\nabla (\cS\circ\cO\circ\cA)(f)=
  \cA^*(\nabla (\cS\circ \cO)(\cA(f)))
  =\cA (\nabla (\cS\circ \cO)(u_f))\in L^2(D),
\end{equation}
where $u_f=\cA(f)\in H_0^1(D)\subseteq L^2(D)$. We next compute
$\nabla (\cS\circ\cO)$. Denoting by $D\cS(w)$ the Fr\'echet
derivative, it holds
\begin{equation*}
  D\cS(w)(v) = (w-y)^\top\Gamma^{-1}v\in\R\qquad\forall v\in \R^{n_y}
\end{equation*}
and with $\xi=(\xi_j)_{j=1}^{n_y}\in L^2(D,\R^{n_y})$ since $\cO$ is
bounded linear,
\begin{equation*}
  D\cO(f)(g) = \cO(g)=\int_D g(s) \xi(s) \dd s \in \R^{n_y}
  \qquad\forall g\in L^2(D).
\end{equation*}
Hence for the composition
\begin{equation*}
  D(\cS\circ\cO)(f)(g) = D\cS (\cO(f))(D\cO(f)(g))
  =\int_D (\cO(f)-y)^\top
  \Gamma^{-1}\xi(s)
  g(s)\dd s\qquad\forall g\in L^2(D).
\end{equation*}
This shows
\begin{equation*}
  \nabla(\cS\circ\cO)(f) = (\cO(f)-y)^\top
  \Gamma^{-1}\xi(\cdot)\in L^2(D)
\end{equation*}
and finally by \eqref{eq:nablafcV}, $\nabla_f\cV(f,y) = \cA(h)=u_h$
with $h(\cdot)=(\cO(u_f)-y)^\top\Gamma^{-1}\xi(\cdot)$.

    \subsection{Finite element approximation of $\nabla\Phi(f)$}\label{sec:running_gradapp}
    Next we argue that $u_h$ can be approximated with the rate claimed
    in Example \ref{ex:2}. According to, e.g., \cite[\S 26.3.3., \S
    32.3.2]{MR4269305}, given $f\in L^2(D)$, the FEM approximation
    $u_f^l$ to $u(f)=\cA(f)$ on a uniform simplicial mesh on
    $D\subseteq\R^2$ with $O(l)$ elements will satisfy
    $\|u_f-u_f^l\|_{L^2(D)}\lesssim l^{-1}$. %
    Now set
    $\tilde h(\cdot):=(\cO(u_f^l)-y)^\top\Gamma^{-1}\xi(\cdot)\in
    L^2(D)$. Then, since $\cO:L^2(D)\to\R^{n_y}$ is bounded linear and
    $\xi\in L^2(D,\R^{n_y})$, we have
    $\|h-\tilde h\|_{L^2(D)}\lesssim l^{-1}$.  Using that
    $\cA:L^2(D)\to L^2(D)$ is bounded linear,
    $\|u_{h}-u_{\tilde h}\|_{L^2(D)}= \|\cA(h-\tilde
    h)\|_{L^2(D)}\lesssim l^{-1}$.  Finally,
    $\|u_{\tilde h}-u_{\tilde h}^l\|_{L^2(D)}\lesssim l^{-1}$, and
    thus by the triangle inequality
    $\|u_{h}-u_{\tilde h}^l\|\lesssim l^{-1}$ as claimed.

    Then
    \begin{equation}\label{eq:glf}
      g_l(f):=u_{\tilde h}^l+\lambda f
    \end{equation}
    is an approximation to $\nabla\Phi(f)$ satisfying
    $\|g_l(f)-\nabla\Phi(f)\|_{L^2(D)}\lesssim l^{-1}$.

    \subsection{Multilevel convergence of gradient
      descent}\label{app:running_conv}
    We first show $L$-smoothness and $\mu$-strong convexity for the
    objective in \eqref{eq:Phifobj}.
  
    With the notation from Sec.~\ref{sec:running_gradient} the norm of
    the operator $\cO:L^2(D)\to\R^{n_y}$ satisfies
    $\|\cO\|=\|\xi\|_{L^2(D)}$.  Hence for all $f$, $g\in L^2(D)$
    \begin{align*}
      \|\nabla\Phi(f)-\nabla\Phi(g)\|_{L^2(D)}
      &=\|\cO(u_f-u_g)^\top\Gamma^{-1}\xi(\cdot)+\lambda(f-g)\|\nonumber\\
      &\le (\|\xi\|_{L^2(D\R^{n_y})}^2\|\Gamma^{-1}\|_{\R^{n_y\times n_y}}+\lambda)\|f-g\|_{L^2(D)}.
    \end{align*}
    This shows that $\Phi$ is $L$-smooth with
    $L=\|\xi\|_{L^2(D\R^{n_y})}^2\|\Gamma^{-1}\|_{\R^{n_y\times
        n_y}}+\lambda$.

    Moreover, since $F:L^2(D)\to\R^{n_y}$ is bounded linear, the term
    $f\mapsto \frac12\|\Gamma^{-1/2}(F(f)-y)\|_{\R^{n_y}}^2$ is
    convex. Hence, the added regularizer ensures $\Phi$ in
    \eqref{eq:Phifobj} to be $\mu$-strongly convex with $\mu=\lambda$.

    In all this shows that Example \ref{ex:2} is in the setting of
    Example \ref{ex:gdagd} with $\alpha=1$ (i.e.\ $\Phi$ is
    $L$-smooth, $\mu$-strongly convex, and $g_l$ in \eqref{eq:glf}
    satisfies the first inequality in \eqref{eq:ass} with
    $\alpha=1$). Thus, for small enough $\eta>0$, the iteration
    $f_{j+1}=f_j-\eta\nabla g_{l_j(\eps)}(f_k)$ with the approximate
    gradient from subsection \ref{sec:running_gradapp}, converges to
    the unique minimizer $f_*\in L^2(D)$ of $\Phi$ as
    $k\to\infty$. With the multilevel choice $l_{K,j}(\eps)$,
    $j=1,\dots,K(\eps)$ as in \eqref{eq:ML}, it holds
    $\|f_{K(\eps)}-f_*\|_{L^2(D)}\lesssim \eps$ and the cost
    $\sum_{j=0}^{K(\eps)-1}l_{K,j}(\eps)$, which (up to a constant)
    can be interpreted as the computational cost of all required FEM
    approximations, behaves like $O(\eps^{-1})$ as $\eps\to 0$
    according to Theorem \ref{thm:ML}.

    \section{Optimality of the multilevel rate for gradient
      descent}\label{app:opt_rate}
    We give a simple example in the setting of Example \ref{ex:gdagd}
    to show that our results in Sec.~\ref{sec:ML_opti} are sharp in
    general.  For some fixed $\alpha>0$, consider the objective
    function and its approximation
    \[\Phi(x) = \frac12 x^2,\quad \Phi_l(x) =
      \frac12(x-l^{-\alpha})^2,\quad x\in\mathbb R.\] Denote the
    unique minimizer of $\Phi$ by $x_*:=0$.  We consider gradient
    descent with a fixed step size $\eta\in (0,1)$, which amounts to
    (cp.~\eqref{eq:Psi})
    \begin{equation*}
      \Psi(x) = x-\eta x,\qquad\Psi_l(x) = x-\eta(x-l^{-\alpha}).
    \end{equation*}
    Hence for some initial value $x_0\in\R$, \eqref{eq:disc_seq}
    becomes
    \begin{equation}\label{eq:counter}
      x_{k+1} =
      \Psi_{l_k}(x_k)=(1-\eta)x_k+\eta l_k^{-\alpha}
    \end{equation}
    and thus assumption \eqref{eq:disc} holds with
    $c:=1-\eta\in (0,1)$.

\begin{proposition}
  Let $x_0\ge 0$, and let $x_k$ be as in \eqref{eq:counter}.  Then for
  every $\eps>0$, for every $K\in\N$ and for every
  $(l_j)_{j=0}^{K-1}\in (0,\infty)^K$ such that $|x_K-x_*|\le\eps$,
  \[\sum_{j=0}^{K-1} l_j \ge
    \eta^{\frac{1}{\alpha}}\varepsilon^{-\frac1\alpha}.\]
\end{proposition}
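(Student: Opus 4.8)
The plan is to exploit that the error introduced by the \emph{final} gradient step enters $x_K$ with coefficient $1$ and is therefore never contracted, so that a single term of the sum $\sum_{j=0}^{K-1}l_j$ already forces the claimed lower bound. First I would record the sign of the iterates: since $x_0\ge 0$, $\eta\in(0,1)$ and $l_j>0$ for all $j$, a one-line induction on $k$ using \eqref{eq:counter} shows $x_k\ge 0$ for every $k\in\N$, because $x_{k+1}=(1-\eta)x_k+\eta l_k^{-\alpha}$ is a sum of two nonnegative terms. In particular $x_K\ge 0$, so with $x_*=0$ the hypothesis $|x_K-x_*|\le\eps$ is equivalent to $x_K\le\eps$.

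Next I would isolate the last step of the recursion \eqref{eq:counter}, namely $x_K=(1-\eta)x_{K-1}+\eta l_{K-1}^{-\alpha}$. Discarding the nonnegative term $(1-\eta)x_{K-1}$ gives $x_K\ge \eta l_{K-1}^{-\alpha}$, and combining this with $x_K\le\eps$ yields $\eta l_{K-1}^{-\alpha}\le\eps$, i.e.\ $l_{K-1}\ge(\eta/\eps)^{1/\alpha}=\eta^{1/\alpha}\eps^{-1/\alpha}$. Since every $l_j$ is strictly positive, $\sum_{j=0}^{K-1}l_j\ge l_{K-1}\ge \eta^{1/\alpha}\eps^{-1/\alpha}$, which is the assertion.

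There is no genuine obstacle here; the only thing to notice is that one need not track the full telescoped expression $x_K=(1-\eta)^K x_0+\sum_{j=0}^{K-1}(1-\eta)^{K-1-j}\eta l_j^{-\alpha}$ (though that expansion works as well, since all summands are nonnegative and the $j=K-1$ summand is exactly $\eta l_{K-1}^{-\alpha}$), but simply that the contribution of the last iteration is undamped. Together with the matching upper bound ${\rm cost}_{\rm ML}(\eps)\simeq\eps^{-1/\alpha}$ from Theorem~\ref{thm:ML}, this shows that the multilevel cost rate $\eps^{-1/\alpha}$ is sharp even for the true error $|x_K-x_*|$, not merely for the surrogate $\tilde e_K$.
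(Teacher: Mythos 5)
Your proof is correct, and it takes a genuinely different (and more elementary) route than the paper. You observe that all iterates stay nonnegative, so the undamped contribution of the final step alone gives $x_K\ge\eta l_{K-1}^{-\alpha}$, whence $l_{K-1}\ge\eta^{1/\alpha}\eps^{-1/\alpha}$ and the sum of positive levels is at least this single term. The paper instead expands the full recursion, $x_K\ge(1-c)\sum_{j=0}^{K-1}c^{K-1-j}l_j^{-\alpha}$ with $c=1-\eta$, and then invokes the Lagrange-multiplier computation of Lemma~\ref{lemma:ML} together with \eqref{eq:costML} to lower-bound $\sum_{j=0}^{K-1}l_j$ by the minimal cost subject to that constraint, namely $\left(\frac{\eps}{\eta}\right)^{-\frac1\alpha}\left(\frac{1-c^{K/(1+\alpha)}}{1-c^{1/(1+\alpha)}}\right)^{\frac{1+\alpha}{\alpha}}$, which it then relaxes to $\eta^{1/\alpha}\eps^{-1/\alpha}$. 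What the paper's route buys is a slightly stronger, $K$-dependent lower bound that tracks the entire level schedule and dovetails with the quasi-optimality machinery of Theorem~\ref{thm:ML}; what your route buys is brevity and a transparent explanation of \emph{why} the bound is unavoidable (the last-step bias is never contracted), with no appeal to Lemma~\ref{lemma:ML} at all. Both arguments implicitly use $K\ge1$ (so that the last step exists), which is consistent with the paper's convention for $\N$, and both yield the stated sharpness of the $\eps^{-1/\alpha}$ cost rate for the true error.
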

\begin{proof}
  We have %
  \begin{equation*}
    x_{K} %
    =c x_{K-1} + (1-c) l_{K-1}^{-\alpha}
    =c^{K} x_0 + \sum_{j=0}^{K-1}(1-c)c^{K-1-j} l_j^{-\alpha}
    \ge (1-c)\sum_{j=0}^{K-1} c^{K-1-j} l_j^{-\alpha}.
  \end{equation*}
  The minimizer of $\sum_{j=0}^{K-1} l_j$ under the constraint
  $(1-c)\sum_{j=0}^{K-1} c^{K-1-j} l_j^{-\alpha}\le \varepsilon$ %
  satisfies according to Lemma \ref{lemma:ML} and \eqref{eq:costML},
  \begin{equation*}
    \sum_{j=0}^{K-1}l_j=\left(\frac{\eps}{1-c}\right)^{-\frac{1}{\alpha}}
    \left(\frac{1-c^{\frac{K}{1+\alpha}}}{1-c^{\frac{1}{1+\alpha}}} \right)^{\frac{1+\alpha}{\alpha}}
    \ge\left(\frac{\eps}{1-c}\right)^{-\frac{1}{\alpha}}
    =\eta^{\frac{1}{\alpha}}\eps^{-\frac{1}{\alpha}}.
  \end{equation*}
\end{proof}

\section{Notation: particle methods}\label{app:notation}
For a particle system $(x^{(m)})_{m=1,\dots,M}$,
$x^{(m)}\in\mathbb R^{n_x}$ and forward operator
$H:\mathbb R^{n_x}\to \mathbb R^{n_z}$, we denote the empirical mean
and covariance operators by
\begin{align*}
  \bar x &= \frac{1}{M}\sum_{m=1}^M x^{(m)},\quad \bar H = \frac1M\sum_{m=1}^M H(x^{(m)}),\\
  C^{x,H}(x) &= \frac1M\sum_{m=1}^M (x^{(m)}-\bar x)\otimes (H(x^{(m)})-\bar H),\\ 
  C^{H,H}(x) &=\frac1M\sum_{m=1}^M (H(x^{(m)})-\bar H)\otimes (H(x^{(m)})-\bar H),\\
  C(x) &= C^{x,x}(x) = \frac1M\sum_{m=1}^M (x^{(m)}-\bar x)\otimes (x^{(m)}-\bar x).
\end{align*}

\section{Algorithm: Multilevel ensemble Kalman
  inversion}\label{app:algo_ML_EKI}

The original EKI method in \cite{ILS2013} has been derived through an
artificial discrete-time data assimilation problem and was formulated
as iterative scheme. For a fixed ensemble size $M$ the time-dynamical
particle system $\{v_j^{(m)}\}_{m=1}^M$ can be written as
\begin{equation}\label{eq:EKI_discrete}
  v_{j+1}^{(m)} = v_j^{(m)} + C^{v,H}(v_j)(C^{H,H}(v_j)+h^{-1}\Sigma)^{-1}(z_{j+1}^{(m)}-H(v_j^{(m)})),\quad j=1,\dots,J,
\end{equation}
where $z_{j+1}^{(m)}\sim\cN(y,h^{-1}\Sigma)$ are perturbed
observations. Viewing $h>0$ as step size the authors in
\cite{SchSt2017} motivated to take the limit $h\to0$ resulting in the
system of coupled SDEs \eqref{eq:EKI_cont}, which has been analysed
rigorously in \cite{BSW2018,BSWW2021}. To be more precise, under weak
assumptions on the general nonlinear forward model $H$, convergence in
probability can be verified, whereas strong convergence can be
verified for linear models. %

We reformulate the update formula \eqref{eq:EKI_discrete} as
\begin{equation}\label{eq:EKI_discrete2}
  v_{j+1}^{(m),l} = v_{j}^{(m),l} + \frac{1}{M}\sum_{r=1}^M \alpha_j^{(r),(m),l} v_j^{(r)}
\end{equation}
verifying the well-known subspace property of the EKI, which will not
be violated for our proposed multilevel formulation. This comes from
the fact, that the updating force formulated in the coordinate system
spanned by the initial ensemble depends on the discretization level
only through the scalar valued coordinates
\[\alpha_j^{(r),(m),l} = \langle H_{l}(v_j^{(r)})-\bar H_{l} ,
  (C^{H_l,H_l}(v_j^{l})+h^{-1}\Sigma)^{-1}(z_{j+1}^{(m)}-H_l(v_t^{(m),l}))\rangle.\]
This observation is useful for an efficient implementation of the
multilevel formulation of EKI and TEKI respectively based on the
discretization scheme \eqref{eq:EKI_discrete} which we summarize as
algorithm below. Note that for standard EKI one needs to run the
algorithm with the choice $H_l\equiv F_l$, $z=y\in\mathbb R^{n_y}$ and
$\Sigma=\Gamma$ and for TEKI, with the corresponding choice
\[H_l(\cdot) = \begin{pmatrix}F_l(\cdot)\\
    {\mathrm{Id}}\end{pmatrix},\quad z = \begin{pmatrix}y\\
    {\bf{0}}_{\mathbb R^{n_x}} \end{pmatrix},\quad \Sigma
  = \begin{pmatrix}\Gamma &0\\ 0 &
    \frac{1}{\lambda}C_0\end{pmatrix}. \]
 
\begin{algorithm}
  \begin{algorithmic}[1]
    \Require initial ensemble $v_0^{(m),0}$, ensemble size $M$,
    forward model $(H_l)_{l\ge0}$, bias parameter $\alpha>0$, rate
    parameter $c\in(0,1)$, step size $h>0$, time-interval length
    $\tau $ such that $N = \tau /h\in\mathbb N$, tolerance
    $\varepsilon>0$.  \State set the number of iteration
    $K\propto \log(\varepsilon^{-1})$ \State set
    $x_0 = \frac{1}{M} \sum\limits_{m=1}^M v_{0}^{(m),0}$ \State
    \textbf{For} {$j=0,\dots,K-1$} \State \quad compute level
    $l_j\propto \varepsilon^{-\frac1\alpha}
    c^{\frac{K-1-j}{1+\alpha}}$ \State \quad \textbf{If} {j=0} \State
    \qquad $v_0^{(m),l_0} = v_{0}^{(m),0},\ m=1,\dots,M$ \State \quad
    \textbf{Else} \State \qquad
    $v_0^{(m),l_j} = v_{\tau }^{(m),l_{j-1}},\ m=1,\dots,M$ \State
    \quad \textbf{EndIf} \State \quad \textbf{For} {$n=0,1,\dots N-1$}
    \State \qquad
    $y_{n+1}^{(m),l_j} \sim\cN(y,h^{-1}\Gamma),\ m=1,\dots,M$ \State
    \qquad
    $\alpha_{n}^{(r),(m),l_j} = \langle F_{l_j}(v_n^{(r)})-\bar
    F_{l_j} ,
    (C^{F_{l_j},F_{l_j}}(v_n^{l_j})+h^{-1}\Gamma)^{-1}(y_{n+1}^{(m),l_j}-F_{l_j}(v_n^{(m),l_j}))\rangle$
    \State \qquad
    $v_{n+1}^{(m),l_j} = v_{n}^{(m),l_j}+\frac{1}{M}\sum_{r=1}^M
    \alpha_n^{(r),(m),l_j} v_n^{(r),l_j}$ \State \quad \textbf{EndFor}
    \State \quad set
    $x_{j+1} = \frac1M\sum\limits_{m=1}^M v_{N}^{(m),l_j}$ \State
    \textbf{EndFor}
  \end{algorithmic}
  \caption{Multilevel ensemble Kalman inversion
    (ML-EKI)}\label{alg:MLEKI}
\end{algorithm}

\section{Proofs of Section \ref{sec:MLEKI}}
\subsection{Proof of
  Proposition~\ref{prop:error_EKI}}\label{app:proof_Prop_EKI}
\begin{proof}
  We first fix an iteration $j$ with level $l_j$ and suppress the
  dependence of $F_{l_j}$ on $l_j$. The evolution equation of the
  particle system for standard EKI can be written as
  \[\mathrm{d}v_t^{(m)} = -(C(v_t)+B)F^\top
    \Gamma^{-1}(Fv_t^{(m)}-y)\,{\mathrm d}t + C(v_t)F^\top
    \Gamma^{-1/2}\,{\mathrm d}W_t^{(m)}.\] We define
  $\bar{\mathfrak r}_t:= \Gamma^{-1/2}F(\bar v_t-v^\dagger)$ and
  $\mathfrak r_t^{(m)}:= \Gamma^{-1/2}F(v_t^{(m)}-v^\dagger)$ such
  that $\frac12\|\bar r_t\|^2 = \cV (\bar v_t)$ and
  \[{\mathrm{d}}\bar{\mathfrak r}_t = -(C(\mathfrak r_t)+\tilde
    B)\bar{\mathfrak r}_t\,{\mathrm d}t + C(\mathfrak r_t)\,{\mathrm
      d}\bar W_t \] with
  $\tilde B := \Gamma^{-1/2}FBF^\top \Gamma^{-1/2}$.  Following
  Theorem~5.2 in \cite{BSWW19} we can bound
  \[\mathbb E[\|\bar{\mathfrak r}_{s+\tau }\|^2]\le \mathbb
    E[\|\bar{\mathfrak r}_{s}\|^2]- \sigma \int_s^{s+\tau }\mathbb
    E[\|\bar{\mathfrak r}_{u}\|^2]\,{\mathrm d}u,\] where $\sigma>0$
  denotes the smallest eigenvalue of $\tilde B$. Hence with
  $\bar v_{t_j}^{l_j} := \frac{1}{M}\sum_{m=1}^Mv_{t_j}^{(m),l_j}$ it
  follows
  \[\mathbb E[ \frac{1}{2} \|F_{l_j} \bar
    v_{t_{j+1}}^{l_j}-y\|_\Gamma^2] \le (1-\sigma\cdot \tau)\mathbb E[
    \frac{1}{2} \|F_{l_{j}} \bar v_{t_{j}}^{l_{j-1}}-y\|_\Gamma^2], \]
  where we have defined
  $\|\cdot \|_\Gamma^2 = \|\Gamma^{-1/2}\cdot\|_{\R^{n_y}}^2$. With
  Assumption~\ref{ass:forwardmodel} and the reverse triangle
  inequality $|\|Fx\|-\|Fy\||\le \|Fx-Fy\|$ we have that
  \begin{align*}
    e_{j+1} = \mathbb E[ \frac{1}{2} \|F \bar v_{t_{j+1}}^{l_j}-y\|_\Gamma^2] &= \mathbb E[ \frac{1}{2} \|F_{l_j} \bar v_{t_{j+1}}^{l_j}-y\|_\Gamma^2] + \mathbb E\left[ \frac{1}{2} \|F \bar v_{t_{j+1}}^{l_j}-y\|_\Gamma^2-\frac{1}{2} \|F_{l_j} \bar v_{t_{j+1}}^{l_j}-y\|_\Gamma^2\right]\\
                                                                              &\le (1-\sigma\cdot \tau )\mathbb E[ \frac{1}{2} \|F_{l_{j}} \bar v_{t_{j}}^{l_{j-1}}-y\|_\Gamma^2] + b_1 l_j^{-\alpha}\\
                                                                              &\le (1-\sigma\cdot \tau )\mathbb E[ \frac{1}{2} \|F \bar v_{t_{j}}^{l_{j-1}}-y\|_\Gamma^2]+ (1-\sigma\cdot \tau )b_1 l_j^{-\alpha}+b_1 l_j^{-\alpha}\\
                                                                              &\le (1-\sigma\cdot \tau )e_j + bl_j^{-\alpha},
  \end{align*}
  for some constant $b_1>0$ and $b =(2-\sigma\cdot \tau )b_1 $. We
  note that we have used that
  $\mathbb E[\|\bar v_{t_{j+1}}^{l_j}\|^2]$ remains uniformly bounded,
  see e.g.~Lemma 5 in \cite{DL2021_a}. Moreover, since we assumed
  finite second moments of the initial distribution $Q_0$, we have
  that $\mathbb E[\|\bar v_0\|^2]<\infty$ and by local Lipschitz
  continuity of $x\mapsto\frac12\|Fx-y\|_\Gamma^2$ we have that
  $e_0=\mathbb E[\|F\bar v_0 - y\|_\Gamma^2]<\infty$.  With the above
  computations we have verified that the error quantity $e$ satisfies
  the decay assumption \eqref{eq:disc} (respectively the
  generalization in Rmk.~\ref{rmk:constant}) and therefore, the
  assertion follows by application of Theorem \ref{thm:SL} and Theorem
  \ref{thm:ML}.
\end{proof}

\subsection{Proof of
  Proposition~\ref{prop:error_TEKI}}\label{app:proof_Prop_TEKI}
\begin{proof}
  We again fix an iteration $j$ with level $l_j$ and suppress the
  dependence of $F_{l_j}$ on $l_j$.  The evolution equation of the
  particle system for TEKI can be written as
  \begin{align*}
    \mathrm{d}v_t^{(m)} &= -(C(v_t)+B)(F^\top \Gamma^{-1}(Fv_t^{(m)}-y)+\lambda C_0^{-1}v_t^{(m)})\,{\mathrm d}t\\
                        &\quad + C(v_t)F^\top \Gamma^{-1/2}\,{\mathrm d}W_t^{(m)}+\sqrt{\lambda}C(v_t)C_0^{-1/2}\,{\mathrm d}W_t^{(m)}.
  \end{align*}
  Next, we define
  $\bar{\mathfrak r}_t:= \Sigma^{-1/2}H(\bar v_t- x_\ast)$ and
  $\mathfrak r_t^{(m)}:= \Sigma^{-1/2} H(\bar v_t-x_\ast)$, where
  $x_\ast\in\mathcal X$ is the unique minimizer of $\cV_R$ and $H$,
  $\Sigma$ are defined in \eqref{eq:TEKIdata}, such that
  $\frac12\|\bar r_t\|^2 = \frac12\|F(\bar
  v_t-x_\ast)\|_\Gamma^2+\frac{\lambda}{2} \|\bar
  v_t-x_\ast\|_{C_0}^2$. Since $x_\ast\in\mathcal X$ is the unique
  minimizer of $\cV_R$ it holds true that
  \[0 = \nabla_x \cV_R(x_\ast) = F^\top \Gamma^{-1}(Fx_\ast-y)+\lambda
    C_0^{-1}x_\ast = H^\top \Sigma^{-1}(Hx_\ast - z),\] and with
  $\tilde B:=\Sigma^{-1/2}HBH^\top \Sigma^{-1/2}$ we can write
  \[{\mathrm{d}}\bar{\mathfrak r}_t = -(C(\mathfrak r_t)+\tilde
    B)\bar{\mathfrak r}_t\,{\mathrm d}t + C(\mathfrak r_t)\,{\mathrm
      d}\bar W_t.\] The assertion follows similarly to the proof of
  Proposition~\ref{prop:error_EKI}.
\end{proof}

\section{Algorithm: Multilevel interacting Langevin
  MCMC}\label{app:algo_ML_Langevin}

The multilevel interacting Langevin sampler is based on its particle
approximation \eqref{eq:interacting_Langevin_integral}.  Due to the
finite number of ensemble size $M$, the resulting algorithm contains
an additional empirical error according to the mean-field limit
represented by the Fokker--Planck equation \eqref{eq:FokkerPlanck}. We
refer to \cite{DL2021_b} for a detailed analysis of large ensemble
size limit. We are going to solve these systems of coupled SDEs by a
forward Euler-Maruyama method and emphasize that other numerical
approximation schemes for SDEs can be applied as well. The resulting
multilevel sampling algorithm is summarized in below.

\begin{algorithm}
  \begin{algorithmic}[1]
    \Require initial distribution $q_0$, ensemble size $M$, gradient
    of log-likelihood $(\nabla\Phi_R^l)_{l\ge0}$, bias parameter
    $\alpha>0$, rate parameter $c\in(0,1)$, step size $h>0$,
    time-interval length $\tau $ such that $N = \tau /h\in\mathbb N$,
    tolerance $\varepsilon>0$.  \State set the number of iteration
    $K\propto \log(\varepsilon^{-1})$ \State Initialize particle
    system as i.i.d. sample $v_0^{(m),0}\sim q_0$ \State set
    $\rho_0 = \frac{1}{M} \sum\limits_{m=1}^M \delta_{v_{0}^{(m),0}}$
    \State \textbf{For} {$j=0,\dots,K-1$} \State \quad compute level
    $l_j\propto \varepsilon^{-\frac1\alpha}
    c^{\frac{K-1-j}{1+\alpha}}$ \State \quad \textbf{If} {j=0} \State
    \qquad $v_0^{(m),l_0} = v_{0}^{(m),0},\ m=1,\dots,M$ \State
    \quad\textbf{Else} \State \qquad
    $v_0^{(m),l_j} = v_{\tau }^{(m),l_{j-1}},\ m=1,\dots,M$ \State
    \quad \textbf{EndIf} \State \quad \textbf{For} {$n=0,1,\dots N-1$}
    \State \qquad $\xi_{n+1}^{(m),l_j}\sim\cN(0,I)$ \State \qquad
    $\Delta W_{n+1}^{(m),l_j} = \sqrt{h}\xi_{k+1}^{(m),l_j}$, \State
    \qquad
    $g_n^{(m),l_j} = -h
    C(v_n^{l_j})\nabla\cV_R^{l_j}(v_n^{(m),l_j})+\sqrt{2C(v_t^{l_j})}\Delta
    W_{n+1}^{(m),l_j}$ \State \qquad
    $v_{n+1}^{(m),l_j} = v_{n}^{(m),l_j}+g_n^{(m),l_j}$ \State \quad
    \textbf{EndFor} \State \quad set
    $\rho_{j+1} = \frac1M\sum\limits_{m=1}^M \delta_{v_{N}^{(m),l_j}}$
    \State \textbf{EndFor}
  \end{algorithmic}
  \caption{Multilevel interacting Langevin sampler
    (ML-ILS)}\label{alg:MLLangevin}
\end{algorithm}

\section{Proofs of Sec.~\ref{sec:MLsampling}}
\subsection{Proof of
  Proposition~\ref{prop:error_EKS}}\label{app:proof_Prop_Langevin}
\begin{proof}
  We define $\Phi_l:\cP\to\R$ by
  $\Phi_l(\rho) = \KL{\rho}{\rho_\ast^{l}}$ and $\Phi:\cP\to\R$ by
  $\Phi(\rho) = \KL{\rho}{\rho_\ast}$ and assuming that for
  $\sigma_1,\sigma_2>0$ we have
  $\nabla^2 \cV_R^l>\sigma_1{\mathrm{Id}}$ and
  $C(\rho_j)>\sigma_2{\mathrm{Id}}$. From
  \cite[Proposition~2]{GHWS2020} it follows that there exists a
  constant $c\in(0,1)$ such that
  $\KL{\rho_{j+1}}{\rho_\ast^{l_j}}\le
  \KL{\rho_{j}}{\rho_\ast^{l_j}}$.  Furthermore, under
  Assumption~\ref{ass:KL} it holds true that
  \[|\Phi(\rho_j)-\Phi_l(\rho_j)|\le b l^{-\alpha}\] since by
  definition of the KL divergence we have that
  \begin{align*}
    |\Phi_l(\rho)-\Phi(\rho)| &= |\KL{\rho}{\rho_\ast^l}-\KL{\rho}{\rho_\ast}|\\ &= |\int_{\R^{n_x}} \rho(x) \log(\rho(x))\,{\mathrm d}x - \int_{\R^{n_x}} \rho(x) \log(\rho_\ast^l(x))\,{\mathrm{d}}x\\ &\quad -\int_{\R^{n_x}} \rho(x) \log(\rho(x))\,{\mathrm d}x + \int_{\R^{n_x}} \rho(x) \log(\rho_\ast(x))\,{\mathrm d}x|\\
                              &\le \int_{\R^{n_x}} \rho(x) |\cV_R^l(x)-\cV_R(x)|\,{\mathrm d}x.
  \end{align*}
  With Assumption~\ref{ass:KL} it follows that
  \begin{align*}
    |\Phi_{l_j}(\rho_j)-\Phi(\rho_j)|&\le \int_{\R^{n_x}} \rho_j(x) |\cV_R^{l_j}(x)-\cV_R(x)|\,{\mathrm d}x\\ &\le b \int_{\mathbb R^{n_x}}\rho_j(x)\|F(x)-F_l(x)\|^2\,{\mathrm d}x \le bl^{-\alpha}.
  \end{align*}
  Finally, we obtain%
  \begin{align*}
    \KL{\rho_{j+1}}{\rho_\ast} &= \KL{\rho_{j+1}}{\rho_\ast^{l_j}}+\left(\KL{\rho_{j+1}}{\rho_\ast}-\KL{\rho_{j+1}}{\rho_\ast^{l_j}}\right)\\
                               &\le c \KL{\rho_{j}}{\rho_\ast^{l_j}}+l_j^{-\alpha}\\
                               &\le c \KL{\rho_{j}}{\rho_\ast}+(1+c)l_j^{-\alpha}.
  \end{align*}
  With the above computations we have verified that the error quantity
  $e$ satisfies the decay assumption \eqref{eq:disc} (respectively the
  generalization in Rmk.~\ref{rmk:constant}) and therefore, the
  assertion follows by application of Theorem \ref{thm:SL} and Theorem
  \ref{thm:ML}.
\end{proof}

\section{Details for Sec.~\ref{sec:numerics}}\label{app:numerics}

The numerical approximations $F_l$ to $F$ are computed as follows:
Given $x\in\mathbb R^{n_x}$, we let
$F_l:\mathbb R^{n_x}\to\mathbb R^{n_y}$ be the map defined by
$F_l(f)=\cO(u^l_f)$, where $u^l_f$ denotes the finite element solution
to \eqref{eq:PDE} using continuous piecewise linear finite elements on
a uniform mesh with meshwidth %
$2^{-\tau(l)}$ where $\tau(l) = \ceil{\log(l)/\log(2)}$ such that
$l=2^{\tau(l)}$.  Since $\cO:H_0^1(D)\to R^{n_y}$ is continuous, it
can be shown that $\|F(x)-F_l(x)\|\lesssim l^{-1}\|x\|$ for all $l$
and all $x\in\R^{n_x}$, i.e.\ the convergence rate $\alpha$ in Section
\ref{sec:ML_opti} equals $1$. As a prior on the parameter space
$\R^{n_x}$ we chose $Q_0=\cN(0,C_0)$ with
$C_0 = {\mathrm{diag}}(i^{-2\beta},i=1,\dots,n_x)$ for some fixed
$\beta>0$.

The truth $x^\dagger$ %
was generated as a draw from the prior.  Figure
\ref{fig:complexity_comparison} shows the error convergence of
multilevel and single-level TEKI in Algorithm \ref{alg:MLEKI} in
Appendix \ref{app:algo_ML_EKI} with the parameters $\beta = 1$ and
$n_x = 100$.  The ensemble size was size $M=50$ and the step size of
the discretization scheme \eqref{eq:EKI_discrete} was chosen a
$h=0.1$. %
The plotted error quantity is
$$e_{K(\eps)} = \mathbb E\left[\frac12\|
  F_{{\mathrm{ref}}}(x_{K}(\varepsilon)-x_\ast)\|_\Gamma^2 +
  \frac{\lambda}{2}\|x_K(\varepsilon)-x_\ast\|_{C_0}^2\right],$$ with
the reference solution $x_*$ computed via
$$x_\ast= (F_{{\mathrm{ref}}}^\top\Gamma^{-1} {F_{{\mathrm{ref}}}} + \lambda C_0^{-1})^{-1}{F_{{\mathrm{ref}}}}^\top\Gamma^{-1}y.$$ The cost quantity was
computed as in \eqref{eq:cost}.

The second plot in Figure \ref{fig:complexity_comparison} shows the
convergence of the posterior mean for the multilevel interacting
Langevin sampler (ILS) in Algorithm~\ref{alg:MLLangevin} in Appendix
\ref{app:algo_ML_Langevin}.  In this case we chose $M=2000$ particles,
$\tau =0.1$ and the step size $h=0.001$ of the Euler-Maruyama scheme.
The plotted error quantity shows
$$\mathbb E\left[\frac12\|
f(\cdot, x_{K(\varepsilon)})-f(\cdot,x_\ast)\|_{L^2(D)}^2\right],$$ where
$$x_\ast= ({F_{{\mathrm{ref}}}}^\top\Gamma^{-1} {F_{{\mathrm{ref}}}}+ C_0^{-1})^{-1}
{F_{{\mathrm{ref}}}}^\top\Gamma^{-1}y,$$ which is the posterior mean
on reference accuracy level $2^{14}$, and coincides with the Tikhonov
regularized solution. We see a similar complexity gains as for
multilevel TEKI.

\end{document}